\newtheorem{thm}{Theorem}[section]
\newtheorem{lem}[thm]{Lemma}
\newtheorem{prop}[thm]{Proposition}
\newtheorem{cor}[thm]{Corollary}
\newtheorem{defn}[thm]{Definition}
\newtheorem{rem}[thm]{Remark}
\newtheorem{exmp}[thm]{Example}
\newcommand{\edge}{\mbox{\raise.2em\hbox to1.5em{\vbox{\hsize1.5em\hbox to1.5em{}\hrule}}}}
\newcommand{\noedge}{\edge\hspace*{-1.5em}\hbox to1.5em{\hss\mbox{/}\hss}}
\newcommand{\cedge}[1]{\edge\hspace*{-1.5em}\raise.4em\hbox to 1.5em{\hss{$#1$}\hss}}
\newcommand{\nocedge}[1]{\cedge{{#1}}\hspace*{-1.5em}\hbox to1.5em{\hss\mbox{\small /}\hss}}
\begin{document}
\title{A Characterization of Signed Graphs with Generalized Perfect Elimination Orderings}
\author{Koji Nuida}
\date{Research Center for Information Security (RCIS), National Institute of Advanced Industrial Science and Technology (AIST), Akihabara-Daibiru Room 1003, 1-18-13 Sotokanda, Chiyoda-ku, Tokyo 101-0021, Japan\\
k.nuida@aist.go.jp}
\maketitle
\begin{abstract}
An important property of chordal graphs is that these graphs are characterized by existence of perfect elimination orderings on their vertex sets.
In this paper, we generalize the notion of perfect elimination orderings to signed graphs, and give a characterization for graphs admitting such orderings, together with characterizations restricted to some subclasses and further properties of those graphs.

\ \\
{\em Keywords}: Signed graph; chordal graph; elimination ordering; characterization
\end{abstract}

\section{Introduction}
\label{sec:intro}

An undirected graph is called {\em chordal} if any cycle with at least four vertices has a {\em chord} (an edge not in the cycle with both endpoints in the cycle).
Chordal graphs are a classical subject in graph theory and these graphs have been playing significant roles also in several related research areas.
A property used in such research frequently is that a graph is chordal if and only if it admits a special ordering of vertices, called a {\em perfect elimination ordering} or a {\em vertex elimination ordering} (see \cite[Section 7]{FG}).
Roughly speaking, perfect elimination orderings correspond to a kind of growing processes from an empty graph to the given graph, in which a new vertex is pasted to the present graph at a clique.
This characterization of chordal graphs is very significant, since it connects combinatorial properties of the graph to geometric ones.
For example, a famous result regarding hyperplane arrangements, given by Richard P.\ Stanley \cite{Sta1}, states that an arrangement parameterized by a graph in certain manner is \lq\lq free'' if and only if the corresponding graph is chordal.

The aim of this paper is to generalize the notion of perfect elimination orderings (and even the notion of chordal graphs) to signed graphs, i.e.\ graphs with each edge having a sign \lq\lq$+$'' or \lq\lq$-$'', and to give a complete characterization of a signed graph admitting such an ordering.
In this paper we call such an ordering and such a graph a {\em signed elimination ordering} and a {\em signed-eliminable} graph, respectively.
A signed elimination ordering is such that it is a usual perfect elimination ordering when restricted to edges with a fixed sign, and it satisfies a further condition across the two signs (see Definition \ref{defn:signed-elimination} for precise definition).
Then our characterization (Theorem \ref{thm:characterization}) says that a signed graph is signed-eliminable if and only if the subgraph restricted to each sign is chordal and it satisfies certain further conditions involving edges with both signs.
The characterization implies that it is indeed a generalization of the aforementioned classical equivalence of chordality to admitting perfect elimination orderings.

We give some comments on related works.
First, the present work is motivated by recent research by Takuro Abe, Yasuhide Numata and the author to generalize Stanley's aforementioned result and to give a partial solution for a conjecture proposed by Christos A.\ Athanasiadis \cite{Ath1} (more precisely, to prove the \lq\lq if'' part of Athanasiadis's conjecture).
See \cite{ANN} for details.
Secondly, a recent work by Terry A.\ Mckee \cite{McK} also extended the notion of chordal graphs to signed graphs.
However, his generalization was done in a very different manner from ours, and there is no obvious relation between his and ours.

This paper is organized as follows.
In Section \ref{sec:preliminary}, we present and fix notations and terminology for graphs and for signed graphs, and also give some lemmas for later references.
In Section \ref{sec:SEO}, we introduce the notion of signed elimination orderings and signed-eliminable graphs, state and prove some fundamental properties, and also give a greedy algorithm for deciding whether a given graph is signed-eliminable and constructing a signed elimination ordering (if it exists).
Section \ref{sec:characterization_preliminary} is an introduction to the full characterization of signed-eliminable graphs; we give definitions of two kinds of exceptional subgraphs (called {\em mountains} and {\em hills}), prove that any signed graph with three vertices is signed-eliminable, and present some further properties.
Section \ref{sec:characterization} is devoted to the statement and the proof of our full characterization.
Finally, in Section \ref{sec:special_case}, we give characterizations of the signed-eliminable graphs in some subclasses (graphs with four vertices; chordal graphs; graphs with independence number less than three; and complete graphs) by restricting our full characterization to these subclasses.

\paragraph*{Acknowledgments.} 
This work was originally motivated by interesting research of Dr.\ Takuro Abe and Dr.\ Yasuhide Numata, thus the author would like to express his best gratitude to them.
The contents of Section \ref{subsec:invariant} are also inspired by Abe and Numata.
Moreover, the author would like to thank every person who gave comments on this work.

\section{Preliminaries}
\label{sec:preliminary}

\subsection{Graphs}
\label{subsec:preliminary_graph}

In this paper every graph $G = (V,E)$ is finite, simple and undirected.
See any textbook of graph theory, e.g.\ \cite{graph}, for basic notations and terminology.
We denote $v \edge w$ and $v \noedge w$, respectively, to signify that $vw \in E$ and $vw \not\in E$, where $vw$ denotes the unordered pair of $v$ and $w$.
For $V' \subset V$, let $G|_{V'}$ denote the induced subgraph of $G$ with vertex set $V'$, and write $G \setminus V' = G|_{V \setminus V'}$.
In this paper, we often abbreviate a singleton $\{x\}$ simply to $x$ unless some ambiguity arises.
For $v \in V$, we write
\begin{displaymath}
N_G(v) = \{w \in V \mid v \edge w\} \mbox{ and } N_G\left[v\right] = N_G(v) \cup v \enspace,
\end{displaymath}
and for $V_1,V_2 \subset V$, define
\begin{displaymath}
\left[V_1,V_2\right] = \{vw \mid v \in V_1,\ w \in V_2,\ v \neq w\}
\end{displaymath}
(note that we do {\em not} assume that $\left[V_1,V_2\right] \subset E$).
We write $N(v) = N_G(v)$ and $N\left[v\right] = N_G\left[v\right]$ if the graph $G$ is obvious from the context.
A graph $G$ is called {\em chordal} if it has no induced cycle of length at least four.
We refer to a bijection $\nu$ from $V$ to $\{1,2,\dots,|V|\}$ as an {\em ordering} on $G$.
The following theorem is a well-known characterization of chordal graphs:
\begin{thm}
[See e.g.\ \cite{FG}]
\label{thm:chordal}
A graph $G$ is chordal if and only if there is an ordering $\nu$ on $G$ such that for any three vertices $u$, $v$ and $w$ of $G$ with $\nu(u) < \nu(w) > \nu(v)$, if $u \edge w \edge v$ then $u \edge v$.
\end{thm}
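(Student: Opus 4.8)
The plan is to prove both implications separately. The backward (``if'') direction is a short direct argument, while the forward (``only if'') direction proceeds by induction on $|V|$ and rests on the classical fact that every chordal graph has a \emph{simplicial} vertex, i.e.\ a vertex $x$ whose neighborhood $N(x)$ induces a clique. I expect the existence of a simplicial vertex to be the only substantial point.

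I would first dispose of the ``if'' direction. Suppose an ordering $\nu$ with the stated property exists, yet $G$ contains an induced cycle $C$ of length $k \geq 4$. Let $w$ be the vertex of $C$ maximizing $\nu$, and let $u$ and $v$ be its two neighbors along $C$. Since $k \geq 4$, the vertices $u$ and $v$ are distinct and non-adjacent on the cycle, so $u \noedge v$ because $C$ is induced. On the other hand, $\nu(u) < \nu(w) > \nu(v)$ together with $u \edge w \edge v$ forces $u \edge v$ by hypothesis, a contradiction. Hence $G$ has no induced cycle of length at least four and is chordal.

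For the ``only if'' direction I would build $\nu$ from the top down. Assuming the simplicial-vertex lemma, let $x$ be simplicial in the chordal graph $G$, put $\nu(x) = |V|$, and apply the induction hypothesis to the (chordal) induced subgraph $G \setminus x$ to order its vertices by $1, \dots, |V| - 1$, taking $\nu$ to agree with that ordering off $x$. To verify the property, take $u, v, w$ with $\nu(u) < \nu(w) > \nu(v)$ and $u \edge w \edge v$. If $w = x$, then $u, v \in N(x)$, which is a clique, so $u \edge v$. If $w \neq x$, then $\nu(u), \nu(v) < \nu(w) < |V|$ forces $u, v \neq x$ as well, so the whole instance lies in $G \setminus x$ and $u \edge v$ follows from the induction hypothesis. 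The base case $|V| = 1$ is trivial.

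The substantial step, and the main obstacle, is the simplicial-vertex lemma, which I would establish via the classical argument of Dirac by proving the stronger statement that a non-complete chordal graph has two \emph{non-adjacent} simplicial vertices (complete graphs being immediate). The induction proceeds as follows: pick non-adjacent $a, b$, choose a minimal $a$--$b$ separator $S$, and let $A$, $B$ be the components of $G \setminus S$ containing $a$, $b$ respectively. The crux is that $S$ induces a clique: if $p, q \in S$ were non-adjacent, then minimality of $S$ gives $p$ and $q$ each a neighbor in $A$ and in $B$, and joining shortest interior-in-$A$ and interior-in-$B$ paths from $p$ to $q$ produces an induced cycle of length at least four, contradicting chordality. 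Once $S$ is a clique, I apply the induction hypothesis to the smaller chordal graphs $G|_{A \cup S}$ and $G|_{B \cup S}$; since a clique contains at most one of any two non-adjacent simplicial vertices, each of these subgraphs yields a simplicial vertex lying inside $A$, respectively inside $B$. As vertices of $A$ have no neighbors in $B$ and vice versa, their neighborhoods in the subgraphs coincide with their neighborhoods in $G$, so these vertices are simplicial in $G$ and mutually non-adjacent, closing the induction.
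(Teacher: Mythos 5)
Your proof is correct. Note that the paper itself offers no proof of this statement: it is quoted as a classical theorem with a pointer to the literature (Fulkerson--Gross), so there is nothing in the paper to compare against step by step. What you have written is the standard textbook argument: the easy direction by taking the $\nu$-maximal vertex of a putative induced cycle of length at least four, and the hard direction by peeling off a simplicial vertex, with the existence of simplicial vertices supplied by Dirac's lemma (a non-complete chordal graph has two non-adjacent simplicial vertices, proved via a minimal separator, which must be a clique). All the steps check out, including the subtle points: the concatenated $p$--$q$ paths through $A$ and through $B$ form a chordless cycle because the paths are shortest within their sides and $S$ separates $A$ from $B$; and in the inductive step the clique $S$ can absorb at most one of the two non-adjacent simplicial vertices of $G|_{A \cup S}$, forcing one into $A$, whose $G$-neighborhood equals its neighborhood in $G|_{A \cup S}$. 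This is exactly the kind of self-contained proof the paper chose to omit, and it is consistent with the way the paper later uses the theorem (e.g.\ in Lemma \ref{lem:ElimOrd_induction} and Lemma \ref{lem:chordal_link_clique}, which redo the simplicial-vertex bookkeeping in the top-down form you describe).
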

An ordering $\nu$ satisfying the condition in this theorem is called a {\em perfect elimination ordering}, or simply an {\em elimination ordering}.
Now a straightforward argument shows the following properties:
\begin{lem}
\label{lem:ElimOrd_induction}
Let $G$ be a graph and $v \in V$.
\begin{enumerate}
\item If $\nu$ is an elimination ordering on $G$ with $\nu(v) = |V|$, then $N_G\left[v\right]$ is a clique of $G$ and the restriction of $\nu$ on $V \setminus v$ is also an elimination ordering on $G \setminus v$.
\item Conversely, suppose that $N_G\left[v\right]$ is a clique of $G$ and $\nu$ is an elimination ordering on $G \setminus v$.
Then any ordering $\overline{\nu}$ on $G$ which extends $\nu$ and satisfies $\overline{\nu}(v) = |V|$ is also an elimination ordering on $G$.
\end{enumerate}
\end{lem}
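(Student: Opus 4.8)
The plan is to read both parts directly off the characterization of elimination orderings in Theorem~\ref{thm:chordal}, exploiting the fact that $\nu(v) = |V|$ forces $v$ to be the top vertex in every triple containing it. Throughout I will use fresh letters $a,b,c$ for an arbitrary triple and recall that the elimination condition asks: whenever one of the three, say $b$, has the strictly largest $\nu$-value and satisfies $a \edge b \edge c$, then $a \edge c$.

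For Part~1 I would first establish that $N_G[v]$ is a clique. Since $v$ is adjacent to every vertex of $N_G(v)$ by definition, it suffices to show that any two distinct $a,c \in N_G(v)$ are adjacent. Both such vertices have $\nu$-value below $\nu(v) = |V|$, so in the triple $a,v,c$ the vertex $v$ is on top and satisfies $a \edge v \edge c$; the elimination property then yields $a \edge c$. The claim that the restriction of $\nu$ to $V \setminus v$ is an elimination ordering on $G \setminus v$ is then immediate: deleting the maximal value $|V|$ leaves a bijection onto $\{1,\dots,|V|-1\}$, and because $G \setminus v$ is an induced subgraph the adjacencies among the remaining vertices are unchanged, so the required condition is just the restriction of the condition already known to hold in $G$.

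For Part~2 I would check the elimination condition for $\overline{\nu}$ on an arbitrary triple $a,b,c$ with $b$ on top and $a \edge b \edge c$, splitting into two cases according to whether $v$ appears. If $v \notin \{a,b,c\}$, then all three lie in $V \setminus v$, where $\overline{\nu}$ agrees with $\nu$ and the adjacencies coincide with those in $G \setminus v$; the conclusion $a \edge c$ then follows because $\nu$ is an elimination ordering on $G \setminus v$. If instead $v \in \{a,b,c\}$, then since $\overline{\nu}(v) = |V|$ is strictly the largest value, $v$ must be the top vertex, i.e.\ $b = v$; the hypotheses $a \edge v$ and $v \edge c$ place $a,c \in N_G[v]$, which is a clique by assumption, whence $a \edge c$.

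The argument is entirely routine, and the one point deserving attention is the observation driving both directions: because $\nu(v)$ (respectively $\overline{\nu}(v)$) equals $|V|$, the vertex $v$ can occupy only the top role in the elimination condition and never one of the two lower positions. This is precisely what reduces Part~2 to the clique hypothesis in the single relevant case and what makes the restriction step in Part~1 transparent.
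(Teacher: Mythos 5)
Your proof is correct, and the paper itself gives no proof of this lemma, merely asserting that it follows from ``a straightforward argument''; your case analysis (the clique condition from triples with $v$ on top, and the observation that $v$ can only ever play the top role) is precisely the routine verification the author intends. Nothing further is needed.
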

We also prepare the following lemma on chordal graphs:
\begin{lem}
\label{lem:chordal_link_clique}
Let $G$ be a chordal graph and $V' \subsetneq V$ a clique of $G$.
Then there is a vertex $v \in V \setminus V'$ such that $N_G\left[v\right]$ is a clique of $G$.
\end{lem}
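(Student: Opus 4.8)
The plan is to deduce the lemma from the classical fact that a chordal graph which is not complete possesses two non-adjacent vertices whose closed neighborhoods are cliques (I will call a vertex $v$ with $N_G[v]$ a clique \emph{simplicial}), and to supply a self-contained proof of that fact using only Theorem~\ref{thm:chordal} and Lemma~\ref{lem:ElimOrd_induction}. First I would dispose of the case where $G$ is complete: here $N_G[v] = V$ is a clique for every $v$, so any vertex of the nonempty set $V \setminus V'$ works. Assuming $G$ is not complete, the two-simplicial-vertices fact yields non-adjacent simplicial vertices $s_1$ and $s_2$; since $V'$ is a clique it cannot contain two non-adjacent vertices, so at least one $s_i$ lies in $V \setminus V'$, and that $s_i$ is the desired vertex.

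It remains to establish the two-simplicial-vertices fact, which I would prove by induction on $|V|$. For $|V| = 2$ a non-complete $G$ consists of two non-adjacent vertices, both trivially simplicial. For the inductive step, fix an elimination ordering $\nu$ (Theorem~\ref{thm:chordal}) and let $z$ be the vertex with $\nu(z) = |V|$; by Lemma~\ref{lem:ElimOrd_induction}(1), $N_G[z]$ is a clique (so $z$ is simplicial) and $\nu$ restricts to an elimination ordering of the chordal graph $G \setminus z$. If $G \setminus z$ is complete, then $V \setminus z$ is a clique and, as $G$ is not complete, some $p \in V \setminus z$ is non-adjacent to $z$; then $N_G[p] = V \setminus z$ is a clique, so $p$ and $z$ are non-adjacent simplicial vertices of $G$. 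Otherwise $G \setminus z$ is not complete, and the inductive hypothesis provides non-adjacent simplicial vertices $s_1, s_2$ of $G \setminus z$.

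The heart of the argument, and the step I expect to be the main obstacle, is upgrading simpliciality from $G \setminus z$ to $G$, since re-inserting $z$ can enlarge a vertex's closed neighborhood and destroy the clique property. The key observation is that a vertex $s$ which is simplicial in $G \setminus z$ and non-adjacent to $z$ remains simplicial in $G$, because its closed neighborhood is then unchanged. Now $s_1$ and $s_2$ cannot both be adjacent to $z$: if they were, both would lie in the clique $N_G[z]$ and hence be adjacent to each other, contradicting their non-adjacency. Thus some $s_i$, say $s_1$, is non-adjacent to $z$; then $s_1$ is simplicial in $G$, and $z$ is a simplicial vertex of $G$ non-adjacent to $s_1$, which completes the induction. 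Feeding this fact back into the reduction above proves the lemma.
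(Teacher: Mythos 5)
Your proof is correct, but it routes through a different intermediate statement than the paper does. You first establish the classical Dirac-type fact that every non-complete chordal graph has two \emph{non-adjacent} simplicial vertices (by induction on $|V|$, peeling off the last vertex $z$ of an elimination ordering and observing that of the two non-adjacent simplicial vertices of $G \setminus z$ at most one can lie in the clique $N_G[z]$), and then derive the lemma because the clique $V'$ can contain at most one of two non-adjacent vertices. The paper instead inducts directly on the statement of the lemma itself, treating the clique as a parameter of the induction: it takes the last vertex $w$ of an elimination ordering, and when $w \in V'$ and $G$ is not complete it applies the induction hypothesis to $G \setminus w$ with the clique $N_G(w)$ in place of $V'$, obtaining a simplicial vertex $v \notin N_G[w] \supseteq V'$ whose closed neighborhood is untouched by reinserting $w$. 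Both arguments share the same engine (elimination ordering, Lemma \ref{lem:ElimOrd_induction}(1), induction on $|V|$, and the observation that a simplicial vertex of the smaller graph stays simplicial if it is non-adjacent to the deleted vertex). Your version proves a stronger and independently useful classical fact at the cost of maintaining the ``two non-adjacent'' invariant through the induction; the paper's version is slightly shorter because the flexibility of choosing $N_G(w)$ as the clique in the induction hypothesis does that bookkeeping implicitly.
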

\begin{proof}
First, an elimination ordering $\mu$ on $G$ exists by Theorem \ref{thm:chordal}.
Let $w = \mu^{-1}(|V|) \in V$.
Then $N_G\left[w\right]$ is a clique by Lemma \ref{lem:ElimOrd_induction}(1).
Our claim holds if $w \not\in V'$; thus suppose that $w \in V'$.
If $N_G\left[w\right] = V$ (i.e.\ $G$ is a complete graph), then any vertex in $V \setminus V'$ satisfies the claim.
On the other hand, suppose that $N_G\left[w\right] \neq V$.
Then we have $N_G(w) \subsetneq V \setminus w$ and $N_G(w)$ is a clique of $G \setminus w$, therefore induction on $|V|$ enables us to take a vertex $v \in (V \setminus w) \setminus N_G(w) = V \setminus N_G\left[w\right]$ such that $N_{G \setminus w}\left[v\right]$ is a clique in $G \setminus w$.
Moreover, we have $v \noedge w$ by the choice of $v$, therefore $N_G\left[v\right] = N_{G \setminus w}\left[v\right]$ is also a clique in $G$.
Hence the claim holds, since $V' \subset N_G\left[w\right]$.
\end{proof}

\subsection{Signed Graphs}
\label{subsec:preliminary_bicolored}

A {\em signed graph} is a graph $G = (V,E)$ with a partition $E = E_+ \cup E_-$ of edge set (where each part may be empty).
For $\sigma \in \{+,-\}$, we write $G_{\sigma} = (V,E_{\sigma})$, and denote $v \cedge{\sigma} w$ and $v \nocedge{\sigma} w$, respectively, to signify that $vw \in E_{\sigma}$ and $vw \not\in E_{\sigma}$.
We simply write $N_{G_{\sigma}}\left[v\right] = N_{\sigma}\left[v\right]$ and $N_{G_{\sigma}}(v) = N_{\sigma}(v)$ if the underlying graph $G$ is obvious from the context.
In this paper, single and duplicate edges in a figure of a graph represent edges with different signs.

The following simple lemma will be used in our argument later:
\begin{lem}
\label{lem:color_border}
Let $G = (V,E)$ be a connected signed graph with $E_+ \neq \emptyset$ and $E_- \neq \emptyset$.
Then we have $v \cedge{+} v' \cedge{-} v''$ for some $v$, $v'$, $v'' \in V$.
\end{lem}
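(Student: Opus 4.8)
The statement asks, in effect, for a single vertex $v'$ that is incident both to a positive edge (to some $v$) and to a negative edge (to some $v''$). The plan is to argue by contradiction: I would assume that \emph{no} vertex of $G$ is incident to edges of both signs, and derive a contradiction with the connectedness of $G$.

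To set this up, I would introduce the two vertex sets $V_+ = \{v \in V \mid v \mbox{ is an endpoint of some edge in } E_+\}$ and $V_- = \{v \in V \mid v \mbox{ is an endpoint of some edge in } E_-\}$. Since $E_+ \neq \emptyset$ and $E_- \neq \emptyset$, both $V_+$ and $V_-$ are nonempty, and the contradiction hypothesis says precisely that $V_+ \cap V_- = \emptyset$. The key observation is then that every edge of $G$ lies entirely inside one of these two sets: a positive edge $xy$ has $x,y \in V_+$ by definition, and likewise a negative edge has both endpoints in $V_-$. Consequently no edge of $G$ can join a vertex of $V_+$ to a vertex of $V_-$.

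Finally I would rule out stray vertices lying outside $V_+ \cup V_-$. Because $E_+ \neq \emptyset$ forces $|V| \geq 2$, connectedness of $G$ implies that every vertex has a neighbour and hence is incident to some edge, so $V = V_+ \cup V_-$. Thus $(V_+, V_-)$ is a partition of $V$ into two nonempty parts with no edge between them, contradicting the connectedness of $G$. This contradiction shows that some vertex $v'$ is incident both to a positive edge and to a negative edge, which is exactly the desired configuration $v \cedge{+} v' \cedge{-} v''$.

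I expect no serious obstacle here; the only step deserving a moment's care is the exhaustion $V = V_+ \cup V_-$, since without it the two parts need not cover all of $V$ and the cut argument would fail. An alternative, equally short route sidesteps this bookkeeping entirely: take any positive edge and any negative edge, and if they share no endpoint, travel along a path between them (guaranteed by connectedness) until two consecutive edges of opposite sign first appear; their common vertex is the required $v'$.
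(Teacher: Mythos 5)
Your argument is correct, but your primary route differs from the paper's. The paper's proof is exactly the ``alternative'' you sketch in your last sentence: it takes a path joining the vertex set of a positive edge to that of a negative edge (which exists by connectedness), obtains a walk $x_1x_2\cdots x_k$ with $x_1 \cedge{+} x_2$ and $x_{k-1} \cedge{-} x_k$, and extracts the triple at the first sign change along that walk. Your main argument instead assumes no vertex meets both signs, partitions $V$ into $V_+$ and $V_-$, and derives a contradiction with connectedness from the absence of crossing edges. Both are sound; you are right that the only point needing care in the cut argument is the exhaustion $V = V_+ \cup V_-$, which you correctly justify via connectedness and $|V| \geq 2$ (and note that $v \neq v''$ comes for free, since $E_+$ and $E_-$ partition $E$ so a single pair cannot carry both signs). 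The path argument is slightly more economical in that it produces the triple constructively without any case on isolated-looking vertices, whereas your partition argument is a clean global statement that generalizes immediately to any edge colouring with more than two colours; either is acceptable here.
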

\begin{proof}
By the assumption, vertex sets of an edge in $E_+$ and of an edge in $E_-$ are joined by a path.
This implies that $G$ involves a path $x_1 x_2 \cdots x_k$ with $x_1 \cedge{+} x_2$ and $x_{k-1} \cedge{-} x_k$.
Now this path must involve a desired triple.
\end{proof}

\section{Generalization of Elimination Orderings to Signed Graphs}
\label{sec:SEO}

\subsection{Definition}
\label{sec:SEO_definition}

As a generalization of perfect elimination orderings for non-signed graphs to signed graphs, here we introduce the following notion:
\begin{defn}
\label{defn:signed-elimination}
{\rm
Let $G = (V,E)$ be a signed graph and $\nu$ an ordering on $G$.
Then we say that $\nu$ is a {\em signed elimination ordering}, or a {\em SEO} in short, if for any triple $(u,v,w)$ of vertices of $G$ such that $\nu(u) < \nu(w) > \nu(v)$, and for each $\sigma \in \{+,-\}$, we have
\begin{description}
\item[(E1)] if $u \cedge{\sigma} w \cedge{\sigma} v$, then $u \cedge{\sigma} v$;
\item[(E2)] if $u \cedge{\sigma} v \cedge{-\sigma} w$, then $u \cedge{\sigma} w$.
\end{description}
We call the graph $G$ {\em signed-eliminable}, or {\em SE} in short, if a SEO on $G$ exists.
}
\end{defn}
In other words, when we assign weights $\omega(vv')$ to pairs $vv'$ of vertices $v,v'$ of $G$ by the rule that $\omega(vv') = \pm 1$ and $0$ if $vv' \in E_{\pm}$ and $vv' \not\in E$, respectively, it follows that SEOs are the orderings $\nu$ such that for any triple $(u,v,w)$ with $\nu(u) < \nu(w) > \nu(v)$, if $a \leq b \leq c$ are three weights $\omega(uv)$, $\omega(vw)$ and $\omega(uw)$ in nondecreasing order, then $b = \omega(uv)$ unless $u \noedge w \noedge v$.
This definition is motivated by recent research on hyperplane arrangements by Abe and Numata (see \cite{ANN}), that generalize Stanley's characterization \cite{Sta1} of certain \lq\lq free'' arrangements in terms of existence of a perfect elimination ordering on the corresponding graph.
\begin{exmp}
\label{exmp:SEgraph}
The signed graphs in Figure \ref{fig:SEgraphs} are signed-eliminable for any $n \geq 2$ (compare these with non-SE graphs given in Definition \ref{defn:forbidden_subgraphs}).
For the graph in the left, a SEO is given by $w \mapsto 1$ and $v_i \mapsto i+1$.
On the other hand, for the graph in the right, a SEO is given by $w_1 \mapsto 1$, $w_2 \mapsto 2$ and $v_i \mapsto i+2$ (which can be derived by the former result and Lemma \ref{lem:simplicial_expand} below).
\end{exmp}
\begin{figure}[htbp]
\centering
\begin{picture}(110,56)(0,-2)
\put(10,10){\circle{8}}\put(14,9){\line(1,0){12}}\put(14,11){\line(1,0){12}}
\put(10,-2){\hbox to0pt{\hss$v_1$\hss}}
\put(30,10){\circle{8}}\put(34,9){\line(1,0){9}}\put(34,11){\line(1,0){9}}
\put(30,-2){\hbox to0pt{\hss$v_2$\hss}}
\put(47,7){$\cdots$}\put(63,9){\line(1,0){9}}\put(63,11){\line(1,0){9}}
\put(76,10){\circle{8}}\put(80,9){\line(1,0){12}}\put(80,11){\line(1,0){12}}
\put(76,-2){\hbox to0pt{\hss$v_{n-1}$\hss}}
\put(96,10){\circle{8}}
\put(96,-2){\hbox to0pt{\hss$v_n$\hss}}
\put(53,36){\circle{8}}\put(50,33){\line(-1,-1){19}}\put(56,33){\line(1,-1){19}}
\put(52,32){\line(-1,-2){6}}\put(54,32){\line(1,-2){6}}
\put(53,44){\hbox to0pt{\hss$w$\hss}}
\put(56,34){\line(2,-1){39}}
\end{picture}
\quad
\begin{picture}(110,56)(0,-2)
\put(10,10){\circle{8}}\put(14,9){\line(1,0){12}}\put(14,11){\line(1,0){12}}
\put(10,-2){\hbox to0pt{\hss$v_1$\hss}}
\put(30,10){\circle{8}}\put(34,9){\line(1,0){9}}\put(34,11){\line(1,0){9}}
\put(30,-2){\hbox to0pt{\hss$v_2$\hss}}
\put(47,7){$\cdots$}\put(63,9){\line(1,0){9}}\put(63,11){\line(1,0){9}}
\put(76,10){\circle{8}}\put(80,9){\line(1,0){12}}\put(80,11){\line(1,0){12}}
\put(76,-2){\hbox to0pt{\hss$v_{n-1}$\hss}}
\put(96,10){\circle{8}}
\put(96,-2){\hbox to0pt{\hss$v_n$\hss}}
\put(41,36){\circle{8}}\put(65,36){\circle{8}}\put(45,36){\line(1,0){16}}
\put(41,44){\hbox to0pt{\hss$w_1$\hss}}\put(65,44){\hbox to0pt{\hss$w_2$\hss}}
\put(38,33){\line(-4,-3){26}}\put(39,32){\line(-1,-2){9}}\put(42,32){\line(1,-2){7}}\put(44,33){\line(3,-2){30}}
\put(61,34){\line(-4,-3){28}}\put(63,32){\line(-1,-2){7}}\put(66,32){\line(1,-2){9}}\put(68,33){\line(3,-2){28}}
\put(45,35){\line(2,-1){47}}
\end{picture}
\caption{Examples of SE graphs}
\label{fig:SEgraphs}
\end{figure}
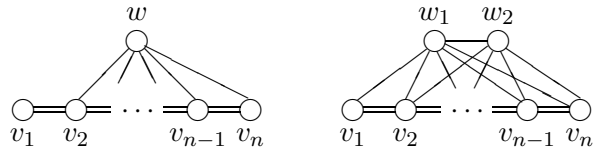
\begin{rem}
\label{rem:Gi_chordal}
{\rm
By condition (E1), a SEO on $G$ is also a perfect elimination ordering on both $G_+$ and $G_-$, thus Theorem \ref{thm:chordal} implies that $G_+$ and $G_-$ must be chordal if $G$ is SE.
In particular, when $E_+ = \emptyset$ or $E_- = \emptyset$, the SEOs on $G$ are precisely the perfect elimination orderings on $G$, therefore in this case $G$ is SE if and only if $G$ is chordal.
Thus SEOs are a generalization of the usual perfect elimination orderings.
}
\end{rem}
\begin{rem}
\label{rem:subgraph_bieliminable}
{\rm
The restriction of any SEO on a signed graph to its induced subgraph is also a SEO.
Thus the property of being SE is closed under taking induced subgraphs.
}
\end{rem}
\begin{rem}
\label{rem:component_bieliminable}
{\rm
A signed graph is SE if and only if every connected component of the graph is SE.
}
\end{rem}
The aim of this paper is to give a characterization of SE graphs.

\subsection{Fundamental Properties}
\label{sec:SEO_property}

In this subsection, we present fundamental properties of SE graphs for later references.
Let $G = (V,E)$ be a signed graph.
We start with the following observation:
\begin{lem}
\label{lem:SignedSimplicial}
Suppose that $\nu$ is a SEO on $G$, and $v = \nu^{-1}(|V|) \in V$.
Then the restriction $\nu|_{V \setminus v}$ of $\nu$ is a SEO on $G \setminus v$, and the following conditions hold:
\begin{description}
\item[(S1)] For each $\sigma \in \{+,-\}$, $N_{G_{\sigma}}\left[v\right]$ is a clique in $G_{\sigma}$ (that is, $v$ is simplicial in $G_{\sigma}$).
\item[(S2)] For each $\sigma \in \{+,-\}$, if $u \cedge{-\sigma} w \cedge{\sigma} v$, then $u \cedge{-\sigma} v$.
\end{description}
We call a vertex $v \in V$ {\em signed-simplicial} if it satisfies these two conditions.
\end{lem}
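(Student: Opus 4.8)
The plan is to use repeatedly that $v = \nu^{-1}(|V|)$ is the largest vertex in the ordering, so every other vertex $x$ satisfies $\nu(x) < \nu(v)$; consequently $v$ can always serve as the peak vertex (the one of largest $\nu$-value) in the triples appearing in Definition \ref{defn:signed-elimination}. Each of the three conclusions then becomes a one-line consequence of (E1) or (E2) applied with this choice of peak.

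For the claim that $\nu|_{V \setminus v}$ is a SEO on $G \setminus v$, I would simply appeal to Remark \ref{rem:subgraph_bieliminable}. Removing the top vertex $v$ turns $\nu$ into a bijection from $V \setminus v$ onto $\{1,\dots,|V|-1\}$, i.e.\ a genuine ordering on the induced subgraph $G \setminus v$, and the restriction of any SEO to an induced subgraph is again a SEO.

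To prove (S1), fix $\sigma$ and take two distinct vertices $u, u' \in N_{G_\sigma}(v)$. Since $\nu(u), \nu(u') < \nu(v)$, condition (E1) applies to the triple $(u,u',v)$ with $v$ as peak: from $u \cedge{\sigma} v \cedge{\sigma} u'$ it yields $u \cedge{\sigma} u'$, so $N_{G_\sigma}\left[v\right]$ is a clique in $G_\sigma$. (Alternatively this follows from Remark \ref{rem:Gi_chordal} together with Lemma \ref{lem:ElimOrd_induction}(1), since $v$ is the top vertex of the elimination ordering $\nu$ on the chordal graph $G_\sigma$.) To prove (S2), suppose $u \cedge{-\sigma} w \cedge{\sigma} v$. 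Writing the edge $wv$ as $w \cedge{-(-\sigma)} v$, the chain $u \cedge{-\sigma} w \cedge{-(-\sigma)} v$ matches exactly the hypothesis of (E2) when the sign $-\sigma$ is taken in place of $\sigma$ and $v$ plays the role of the peak; hence (E2) gives $u \cedge{-\sigma} v$.

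I do not expect a genuine obstacle here, as the proof is a matter of correctly assigning the three vertices to the roles in Definition \ref{defn:signed-elimination}. The only point requiring care is the sign bookkeeping in (S2): one must notice that the two edges $uw$ and $wv$ carry opposite signs, and therefore invoke (E2) (with the sign flipped) rather than (E1). I would also quickly verify that each triple consists of three distinct vertices, so that the definition genuinely applies; this is automatic, since an edge joins distinct endpoints and $E_+$, $E_-$ are disjoint, so no pair $uv$ can carry both signs.
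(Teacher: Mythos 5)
Your proof is correct and follows essentially the same route as the paper: the first claim via Remark \ref{rem:subgraph_bieliminable}, (S1) via the top vertex being simplicial in each $G_\sigma$ (the paper cites Lemma \ref{lem:ElimOrd_induction} and Remark \ref{rem:Gi_chordal}, which is the alternative you note; your direct use of (E1) is equivalent), and (S2) via (E2) with $v$ as the peak and the sign flipped exactly as you describe.
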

\begin{proof}
The first claim follows from Remark \ref{rem:subgraph_bieliminable}.
For the second claim, the first condition is satisfied by Lemma \ref{lem:ElimOrd_induction} and Remark \ref{rem:Gi_chordal}.
For the second condition, since $\nu(u) < \nu(v)$ and $\nu(w) < \nu(v)$, we have $u \cedge{-\sigma} v$ by condition (E2) in Definition \ref{defn:signed-elimination}.
Hence the claim holds.
\end{proof}
\begin{rem}
\label{rem:NecessaryCondition}
{\rm
If $v \in V$ is signed-simplicial, then $N_{\sigma}\left[v\right]$ is a {\em maximal} clique of $G_{\sigma}$ for each $\sigma \in \{+,-\}$.
On the other hand, if $v \in V$ and $N_G\left[v\right] = V$, then condition (S1) for $v$ implies condition (S2) for $v$.
}
\end{rem}
Owing to Lemma \ref{lem:SignedSimplicial}, we introduce the following notation:
\begin{defn}
\label{defn:M}
{\rm 
Let $S(G)$ denote the set of the signed-simplicial vertices of $G$; thus $S(G) \neq \emptyset$ if $G$ is signed-eliminable.
}
\end{defn}
\begin{rem}
\label{rem:M_nondecreasing}
{\rm
By the definition of $S(G)$, we have $v \in S(G|_{V'})$ if $v \in S(G)$ and $v \in V' \subset V$.
}
\end{rem}
Our next result shows that the \lq\lq converse'' of Lemma \ref{lem:SignedSimplicial} is also valid:
\begin{lem}
\label{lem:SEO_extension}
Suppose that $v \in S(G)$ and $\nu$ is a SEO on $G \setminus v$.
Then the unique extension $\overline{\nu}$ of $\nu$ to $V$ with $\overline{v}(v) = |V|$ is also a SEO on $G$.
Any SEO on $G$ is obtained in such a manner.
\end{lem}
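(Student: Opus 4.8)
The plan is to verify the two defining conditions (E1) and (E2) of Definition \ref{defn:signed-elimination} directly for the extended ordering $\overline{\nu}$, separating the triples of vertices that avoid $v$ from those that contain $v$; the final sentence of the statement will then follow immediately from Lemma \ref{lem:SignedSimplicial}.

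First I would fix a triple $(x,y,z)$ of vertices of $G$ with $\overline{\nu}(x) < \overline{\nu}(z) > \overline{\nu}(y)$, so that $z$ plays the role of the peak vertex in (E1) and (E2). If $v \notin \{x,y,z\}$, then all three vertices lie in $V \setminus v$, the map $\overline{\nu}$ agrees with $\nu$ on them, and the signs of the pairs among $x$, $y$, $z$ coincide in $G$ and in the induced subgraph $G \setminus v$; hence (E1) and (E2) hold for this triple because $\nu$ is a SEO on $G \setminus v$.

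The remaining case is the heart of the argument. If $v \in \{x,y,z\}$, then since $\overline{\nu}(v) = |V|$ is the strict maximum of $\overline{\nu}$ while $\overline{\nu}(z)$ exceeds both $\overline{\nu}(x)$ and $\overline{\nu}(y)$, the vertex $v$ is forced to occupy the peak position, i.e.\ $z = v$ and $x, y \in V \setminus v$. With $z = v$, condition (E1) asserts that $x \cedge{\sigma} v \cedge{\sigma} y$ implies $x \cedge{\sigma} y$, which is exactly the statement that $N_{\sigma}\left[v\right]$ is a clique in $G_{\sigma}$, namely condition (S1). Likewise, with $z = v$, condition (E2) asserts that $x \cedge{\sigma} y \cedge{-\sigma} v$ implies $x \cedge{\sigma} v$; replacing $\sigma$ by $-\sigma$ in (S2) (which is legitimate since (S2) is asserted for each $\sigma$, and $y$ ranges over all of its hypotheses) yields precisely this implication for both orderings of $x$ and $y$. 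As $v \in S(G)$ satisfies (S1) and (S2), both conditions hold, and therefore $\overline{\nu}$ is a SEO on $G$.

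For the last sentence, let $\mu$ be an arbitrary SEO on $G$ and put $v = \mu^{-1}(|V|)$. By Lemma \ref{lem:SignedSimplicial} the vertex $v$ is signed-simplicial, so $v \in S(G)$, and $\mu|_{V \setminus v}$ is a SEO on $G \setminus v$; moreover $\mu$ is plainly the unique extension of $\mu|_{V \setminus v}$ with $\mu(v) = |V|$, so $\mu$ arises from the construction above. I do not expect a genuine obstacle: the only point requiring care is the observation that $v$ can enter a triple only in the peak position, after which matching (E1) and (E2) against the signed-simplicial conditions (S1) and (S2)—using the sign symmetry of (S2)—completes the verification.
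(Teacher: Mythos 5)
Your proof is correct and follows essentially the same route as the paper's: triples avoiding $v$ are handled by $\nu$ being a SEO on $G \setminus v$, triples containing $v$ force $v$ into the peak position, and there (E1) and (E2) reduce to (S1) and (S2) (the latter after the sign swap $\sigma \mapsto -\sigma$, which you correctly note), while the final sentence is just Lemma \ref{lem:SignedSimplicial}. You merely spell out the details the paper leaves implicit.
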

\begin{proof}
First, note that the last claim is a restatement of Lemma \ref{lem:SignedSimplicial}.
To prove that $\overline{\nu}$ is a SEO, since $\nu$ is a SEO on $G \setminus v$, it suffices to show that conditions (E1) and (E2) are satisfied for $\overline{\nu}$ when $v$ plays the role of $w$ in these conditions.
Now (E1) and (E2) follow from the conditions (S1) and (S2), respectively, for $v$ to be signed-simplicial.
\end{proof}
\begin{cor}
\label{cor:GreedySucceed}
Suppose that $G$ is signed-eliminable.
Then for {\em any} $v \in S(G)$, there is a SEO $\nu$ on $G$ such that $\nu(v) = |V|$.
\end{cor}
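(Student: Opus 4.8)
The plan is to recast the statement as a claim about vertex deletion. By Lemma \ref{lem:SEO_extension}, a SEO $\nu$ on $G$ with $\nu(v) = |V|$ exists precisely when $G \setminus v$ admits some SEO, i.e.\ when $G \setminus v$ is itself signed-eliminable: given any SEO on $G \setminus v$, the extension placing $v$ last is a SEO on $G$ by that lemma. So it suffices to prove the deletion-stability statement that if $G$ is SE and $v \in S(G)$, then $G \setminus v$ is SE. I would establish this by induction on $|V|$.

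The base case $|V| = 1$ is immediate, since then $G \setminus v$ is the empty graph. For the inductive step, I fix any SEO $\mu$ on $G$ (which exists as $G$ is SE) and let $w = \mu^{-1}(|V|)$ be its last vertex. If $w = v$, then Lemma \ref{lem:SignedSimplicial} already gives that $\mu|_{V \setminus v}$ is a SEO on $G \setminus v$, and we are done. The substantive case is $w \neq v$.

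Since $\mu$ is a SEO with last vertex $w$, Lemma \ref{lem:SignedSimplicial} yields $w \in S(G)$ and shows $G \setminus w$ is SE (via $\mu|_{V \setminus w}$). Because $v \in S(G)$ and $v \in V \setminus w$, Remark \ref{rem:M_nondecreasing} gives $v \in S(G \setminus w)$. Now $G \setminus w$ has fewer vertices, so the induction hypothesis applied to the SE graph $G \setminus w$ and its signed-simplicial vertex $v$ shows that $(G \setminus w) \setminus v$ is SE. Using the identity $(G \setminus w) \setminus v = (G \setminus v) \setminus w$, and noting $w \in S(G \setminus v)$ (again by Remark \ref{rem:M_nondecreasing}, since $w \in S(G)$ and $w \in V \setminus v$), Lemma \ref{lem:SEO_extension} lets me extend a SEO on $(G \setminus v) \setminus w$ by placing $w$ last, producing a SEO on $G \setminus v$. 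Hence $G \setminus v$ is SE, which completes the induction and, via the reduction above, the corollary.

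The only delicate point is the bookkeeping in the case $w \neq v$: one must delete $v$ and $w$ in either order and track that the signed-simpliciality of each of these two vertices survives passage to the relevant induced subgraphs. This is exactly what the stability of $S(\cdot)$ under induced subgraphs (Remark \ref{rem:M_nondecreasing}), together with the commutativity of vertex deletion, provides, so I expect no genuinely new ingredient beyond Lemmas \ref{lem:SignedSimplicial} and \ref{lem:SEO_extension} to be required.
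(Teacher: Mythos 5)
Your proof is correct, and its final step (extend a SEO on $G \setminus v$ by placing $v$ last, via Lemma \ref{lem:SEO_extension}) is exactly the paper's. Where you diverge is in how you establish that $G \setminus v$ is signed-eliminable: you prove this by induction on $|V|$, peeling off the last vertex $w$ of an arbitrary SEO, commuting the two deletions, and reassembling with Lemma \ref{lem:SEO_extension} and Remark \ref{rem:M_nondecreasing}. The paper instead obtains ``$G \setminus v$ is SE'' in one line from Remark \ref{rem:subgraph_bieliminable}: the restriction of any SEO to an induced subgraph is again a SEO, so the SE property is inherited by \emph{every} induced subgraph, with no need for $v$ to be signed-simplicial. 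Your induction is sound (the bookkeeping with $w \neq v$, the identity $(G \setminus w) \setminus v = (G \setminus v) \setminus w$, and the two applications of Remark \ref{rem:M_nondecreasing} all check out), but it reproves, in the special case $v \in S(G)$ and at the cost of an induction, a fact the paper already has in full generality; the only thing the detour buys is independence from Remark \ref{rem:subgraph_bieliminable}, which is itself immediate from the definition of a SEO.
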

\begin{proof}
Remark \ref{rem:subgraph_bieliminable} implies that $G \setminus v$ is SE, therefore a SEO $\nu$ on $G \setminus v$ exists.
This $\nu$ extends to the desired ordering on $G$ by Lemma \ref{lem:SEO_extension}.
\end{proof}

\subsection{An Algorithm to Find Signed Elimination Orderings}
\label{subsec:SEO_algorithm}

Summarizing the results in the previous sections, here we give a greedy algorithm which enables us to decide whether or not a given signed graph $G$ is signed-eliminable and to construct a SEO on $G$ (whenever it exists).
The next lemma is a key ingredient of our algorithm:
\begin{lem}
\label{lem:BiElimOrd_and_M}
Let $G = (V,E)$ be a signed graph.
\begin{enumerate}
\item Let $\nu$ be a SEO on $G$, and put $v_i = \nu^{-1}(i) \in V$ for $1 \leq i \leq |V|$.
Then
\begin{equation}
\label{eq:BiElimOrd_and_M}
v_i \in S(G|_{\{v_1,v_2,\dots,v_i\}}) \mbox{ for each } 1 \leq i \leq |V| \enspace.
\end{equation}
\item Conversely, let $V = \{v_1,v_2,\dots,v_n\}$ be a numbering of elements of $V$ satisfying the condition (\ref{eq:BiElimOrd_and_M}).
Then the map $\nu:v_i \mapsto i$ is a SEO on $G$.
\end{enumerate}
\end{lem}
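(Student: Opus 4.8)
The plan is to treat the two parts separately: part (1) follows directly from the behavior of SEOs under restriction, and part (2) goes by induction on $|V|$, peeling off the top vertex and invoking the extension lemma.

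For part (1), I would fix $1 \leq i \leq |V|$ and consider the induced subgraph $G|_{\{v_1,\dots,v_i\}}$. By Remark \ref{rem:subgraph_bieliminable}, the restriction of $\nu$ to $\{v_1,\dots,v_i\}$ is again a SEO on this subgraph. Since $\nu(v_i) = i$ is the largest value occurring among $v_1,\dots,v_i$, the vertex $v_i$ is exactly the top vertex of this restricted SEO. Hence Lemma \ref{lem:SignedSimplicial} applies and yields that $v_i$ is signed-simplicial in $G|_{\{v_1,\dots,v_i\}}$, which is precisely the assertion $v_i \in S(G|_{\{v_1,\dots,v_i\}})$ of (\ref{eq:BiElimOrd_and_M}).

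For part (2), I would argue by induction on $n = |V|$, the cases $n \leq 1$ being trivial. Put $G' = G \setminus v_n = G|_{\{v_1,\dots,v_{n-1}\}}$. The first point is that for every $i \leq n-1$ one has $G'|_{\{v_1,\dots,v_i\}} = G|_{\{v_1,\dots,v_i\}}$, so the truncated numbering $v_1,\dots,v_{n-1}$ satisfies the analogue of (\ref{eq:BiElimOrd_and_M}) for $G'$; by the induction hypothesis the map $\nu'\colon v_i \mapsto i$ ($1 \leq i \leq n-1$) is a SEO on $G'$. The case $i = n$ of (\ref{eq:BiElimOrd_and_M}) reads $v_n \in S(G|_{\{v_1,\dots,v_n\}}) = S(G)$. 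Thus Lemma \ref{lem:SEO_extension} applies to this $v_n$ and this SEO $\nu'$ on $G \setminus v_n$: the unique extension sending $v_n$ to $n = |V|$ is a SEO on $G$. Since that extension is exactly the map $\nu$ we started with, the induction closes.

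Both parts are essentially repackagings of Lemmas \ref{lem:SignedSimplicial} and \ref{lem:SEO_extension}, so I do not expect any single step to present a genuine obstacle. The only place that warrants care is the bookkeeping in part (2): verifying that the prefix condition (\ref{eq:BiElimOrd_and_M}) passes verbatim to the subgraph $G'$ (so that the induction hypothesis is legitimately applicable) and confirming that the extension produced by Lemma \ref{lem:SEO_extension} coincides with the original ordering $\nu$.
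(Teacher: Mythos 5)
Your proposal is correct and follows essentially the same route as the paper: part (1) via Remark \ref{rem:subgraph_bieliminable} plus Lemma \ref{lem:SignedSimplicial}, and part (2) by induction using Lemma \ref{lem:SEO_extension} (the paper phrases the induction as running upward over the prefixes $V_i$ rather than peeling off the top vertex, but the content is identical). No gaps.
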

\begin{proof}
Put $V_i = \{v_1,\dots,v_i\}$.
The former claim follows from Remark \ref{rem:subgraph_bieliminable} and Lemma \ref{lem:SignedSimplicial}; namely, $\nu|_{V_i}$ is a SEO on $G|_{V_i}$ for each $i$.
On the other hand, for the latter claim, it follows from Lemma \ref{lem:SEO_extension} and induction on $i$ that the restriction of $\nu$ to $V_i$ is a SEO on $G|_{V_i}$.
Thus the claim holds since $V_n = V$.
\end{proof}
Now our algorithm is described as follows:
\begin{thm}
\label{thm:GreedyAlgorithm}
Consider the following algorithm (with input $G$):
\begin{description}
\item[Step 1:] If $V = \emptyset$, then output an empty sequence $()$.
Otherwise, go to Step 2.
\item[Step 2:] Find a vertex $v \in S(G)$ (by, for example, checking the condition of being signed-simplicial for every vertex) and go to Step 3.
If such a vertex does not exist, output NULL.
\item[Step 3:] Perform this algorithm recursively for input $G \setminus v$.
If it outputs a sequence $(w_1,\dots,w_k)$, then output a sequence $(w_1,\dots,w_k,v)$.
If it outputs NULL, then output NULL.
\end{description}
Then $G$ is signed-eliminable if and only if the algorithm outputs a (possibly empty) sequence, not NULL.
Moreover, if the output is a sequence $(v_1,\dots,v_n)$, then the map $\nu:v_i \mapsto i$ is a SEO on $G$.
\end{thm}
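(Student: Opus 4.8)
The plan is to induct on $n = |V|$ and to reduce both assertions of Theorem \ref{thm:GreedyAlgorithm} to the prefix characterization of SEOs given in Lemma \ref{lem:BiElimOrd_and_M}. Writing $V_i = \{v_1,\dots,v_i\}$, the driving observation is that the recursive structure of the algorithm forces any output sequence to satisfy condition (\ref{eq:BiElimOrd_and_M}), after which Lemma \ref{lem:BiElimOrd_and_M}(2) does the rest. For the base case $V = \emptyset$, the algorithm outputs the empty sequence, the empty graph is vacuously SE, and the SEO claim is empty; so I would then assume $n \geq 1$ and that the theorem holds for every signed graph with fewer than $n$ vertices.

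First I would treat the direction \lq\lq the algorithm outputs a sequence $\Rightarrow$ $G$ is SE and $\nu$ is a SEO''. Suppose Step 2 selects some $v \in S(G)$ and the recursive call on $G \setminus v$ returns a sequence. By the inductive hypothesis applied to $G \setminus v$, that sequence has the form $(v_1,\dots,v_{n-1})$ with $v_i \in S\bigl((G \setminus v)|_{V_i}\bigr)$ for each $i \leq n-1$; since $V_i \subset V \setminus v$, the induced subgraphs agree and this reads $v_i \in S(G|_{V_i})$. The full output is $(v_1,\dots,v_{n-1},v)$, and setting $v_n = v$ together with $v_n \in S(G) = S(G|_{V_n})$ shows that the whole numbering satisfies (\ref{eq:BiElimOrd_and_M}). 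Lemma \ref{lem:BiElimOrd_and_M}(2) then yields that $\nu : v_i \mapsto i$ is a SEO, so in particular $G$ is SE. This single argument establishes both the \lq\lq only if'' half of the equivalence and the final SEO assertion.

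For the converse, \lq\lq $G$ SE $\Rightarrow$ the algorithm outputs a sequence'', I would argue as follows. If $G$ is SE then $S(G) \neq \emptyset$ (Definition \ref{defn:M}, or equivalently: the top vertex of any SEO is signed-simplicial by Lemma \ref{lem:SignedSimplicial}), so Step 2 succeeds in choosing some $v \in S(G)$. Because $G \setminus v$ is an induced subgraph of $G$, Remark \ref{rem:subgraph_bieliminable} guarantees that $G \setminus v$ is again SE, so by the inductive hypothesis the recursive call on $G \setminus v$ returns a sequence rather than NULL, and the algorithm outputs $(w_1,\dots,w_k,v)$. Combining the two directions gives the stated equivalence.

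The hard part to isolate is the usual pitfall of greedy procedures: a priori, selecting a \lq\lq wrong'' signed-simplicial vertex in Step 2 might strand the recursion at a NULL even though $G$ is SE. The point I would stress is that this cannot happen here, precisely because SE is hereditary under induced subgraphs (Remark \ref{rem:subgraph_bieliminable}): removing \emph{any} vertex of an SE graph leaves an SE graph, so every choice available in Step 2 keeps the recursion alive. (Corollary \ref{cor:GreedySucceed} records the same safety in the sharper form that any $v \in S(G)$ can be placed last in some SEO.) Thus the induction never branches into a dead end, and no case analysis on the particular vertex chosen is needed.
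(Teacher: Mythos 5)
Your proof is correct and follows essentially the same route as the paper: the forward direction reduces the output sequence to condition (\ref{eq:BiElimOrd_and_M}) and invokes Lemma \ref{lem:BiElimOrd_and_M}(2), while the converse combines the nonemptiness of $S(G)$ (Lemma \ref{lem:SignedSimplicial}) with heredity of the SE property (Remark \ref{rem:subgraph_bieliminable}) and induction on $|V|$. Your explicit unpacking of ``by the construction'' via the identification $(G \setminus v)|_{V_i} = G|_{V_i}$, and the closing remark on why the greedy choice cannot dead-end, are accurate elaborations of what the paper leaves implicit.
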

\begin{proof}
First, if the algorithm outputs a sequence $(v_1,\dots,v_n)$, then this sequence satisfies the condition in Lemma \ref{lem:BiElimOrd_and_M}(\ref{eq:BiElimOrd_and_M}) by the construction, therefore $G$ is SE with a SEO $\nu$.
On the other hand, if $G$ is SE, then a $v \in S(G)$ is found in Step 2 by Lemma \ref{lem:SignedSimplicial}(2), while $G \setminus v$ is also SE by Remark \ref{rem:subgraph_bieliminable}.
Thus the output in Step 3 for input $G \setminus v$ is not NULL by induction on $|V|$, therefore the output for input $G$ is also not NULL.
Hence the proof is concluded.
\end{proof}

\subsection{Invariants for Signed-Eliminable Graphs}
\label{subsec:invariant}

In this subsection, we introduce the following object associated to each SE graph that can be computed from a given SEO, and prove that it is in fact independent of the choice of the SEO; therefore the object is an invariant for SE graphs.
The definition is the following:
\begin{defn}
\label{defn:degree_sequence}
Let $G = (V,E)$ be a signed-eliminable graph with $n$ vertices and $\nu$ a SEO on $G$.
Then for each $1 \leq i \leq n$, define a pair $\mathrm{d}^{(\nu)}(i) = (\mathrm{d}^{(\nu)}_+(i),\mathrm{d}^{(\nu)}_-(i))$ of nonnegative integers by
\begin{displaymath}
\mathrm{d}^{(\nu)}_{\sigma}(i) = \left|\{v \in V \mid \nu(v) \leq i \mbox{ and } v_i \cedge{\sigma} v\}\right| \mbox{ for each } \sigma \in \{+,-\} \enspace,
\end{displaymath}
where $v_i = \nu^{-1}(i) \in V$.
Moreover, let $\mathrm{d}^{(\nu)}$ denote the {\em multiset} consisting of all pairs $\mathrm{d}^{(\nu)}(i)$ with $1 \leq i \leq n$.
\end{defn}
For example, if $G$ is a graph with $v_1 \cedge{+} v_2 \cedge{+} v_4$ and $v_3 \cedge{-} v_4$ (and having no other vertices and no other edges) and $\nu$ is a SEO on $G$ such that $\nu(v_i) = i$, then $\mathrm{d}^{(\nu)} = \{\mathrm{d}^{(\nu)}(1),\dots,\mathrm{d}^{(\nu)}(4)\} = \{(0,0),(1,0),(0,0),(1,1)\}$.
For this object, we have the following property:
\begin{prop}
\label{prop:degree_sequence_invariant}
For any signed-eliminable graph $G$, the multiset $\mathrm{d}^{(\nu)}$ does not depend on the choice of a SEO $\nu$ on $G$.
Hence $\mathrm{d}^{(\nu)}$ gives an invariant for signed-eliminable graphs.
\end{prop}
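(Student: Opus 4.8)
The plan is to argue by induction on $n = |V|$, the base case $n \le 1$ being trivial. For the inductive step I would first reduce the statement to a purely local comparison between two signed-simplicial vertices. By Lemma \ref{lem:SignedSimplicial}, the top vertex $v = \nu^{-1}(n)$ of any SEO $\nu$ lies in $S(G)$, and since $\nu(v) = n$ is maximal, the $i = n$ term of the multiset is exactly $(|N_+(v)|,|N_-(v)|)$, while the remaining terms form precisely $\mathrm{d}^{(\nu|_{V \setminus v})}$ for the SEO $\nu|_{V \setminus v}$ on $G \setminus v$ (the positions, hence the back-degrees, of the other vertices are unaffected by deleting the top vertex). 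Thus
\[
\mathrm{d}^{(\nu)} = \mathrm{d}^{(\nu|_{V \setminus v})} \uplus \{(|N_+(v)|,|N_-(v)|)\},
\]
where $\uplus$ denotes multiset union. Since $G \setminus v$ is signed-eliminable (Remark \ref{rem:subgraph_bieliminable}), the inductive hypothesis shows that $\mathrm{d}^{(\nu|_{V\setminus v})}$ depends only on $G \setminus v$; hence the value $D(v) := \mathrm{d}^{(\nu)}$ is the same for every SEO $\nu$ sharing the top vertex $v$. It therefore suffices to prove that $D(v) = D(v')$ for any two $v, v' \in S(G)$.

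Second, I would fix distinct $v, v' \in S(G)$ and compute $D(v)$ through a SEO that places $v$ on top and $v'$ immediately below. Such an ordering exists: by Remark \ref{rem:M_nondecreasing} we have $v' \in S(G \setminus v)$, so extending any SEO on $G \setminus \{v,v'\}$ first by $v'$ and then by $v$ yields a SEO on $G$ via Lemma \ref{lem:SEO_extension}. Writing $a_\sigma = 1$ if $v \cedge{\sigma} v'$ and $a_\sigma = 0$ otherwise, the $\sigma$-back-degree of $v'$ in $G \setminus v$ is $|N_\sigma(v')| - a_\sigma$, so peeling off $v$ and then $v'$ gives
\[
D(v) = \mathrm{d}_{G \setminus \{v,v'\}} \uplus \{(|N_+(v')| - a_+, |N_-(v')| - a_-),\ (|N_+(v)|, |N_-(v)|)\},
\]
where $\mathrm{d}_{G \setminus \{v,v'\}}$ is the multiset of $G \setminus \{v,v'\}$, well-defined by induction. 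The symmetric computation for $D(v')$ interchanges the roles of $v$ and $v'$. As the common summand $\mathrm{d}_{G\setminus\{v,v'\}}$ cancels, the whole proposition reduces to the claim that the two appended pairs coincide, as a two-element multiset, under exchanging $v \leftrightarrow v'$.

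The crux — and the step I expect to be the main obstacle — is this transposition identity, which is where the signed-simpliciality conditions (S1) and (S2) must be used. If $v \noedge v'$ then $a_+ = a_- = 0$ and the two multisets are literally identical, so the real content is the case $v \cedge{\sigma_0} v'$ for some sign $\sigma_0$. Here I would establish the stronger statement that $|N_\sigma(v)| = |N_\sigma(v')|$ for \emph{both} $\sigma$, which collapses the two appended multisets onto one another. For the matching sign $\sigma = \sigma_0$: since $v$ and $v'$ are both simplicial in $G_{\sigma_0}$ by (S1) and are $\sigma_0$-adjacent, every $\sigma_0$-neighbor of $v$ other than $v'$ lies in the clique $N_{\sigma_0}[v]$ and is hence $\sigma_0$-adjacent to $v'$; the symmetric argument yields $N_{\sigma_0}(v) \setminus v' = N_{\sigma_0}(v') \setminus v$, and accounting for the edge $vv'$ in each neighborhood gives equal $\sigma_0$-degrees. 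For the opposite sign $\sigma = -\sigma_0$: applying condition (S2) for $v'$ (with sign $\sigma_0$) to the triple $u \cedge{-\sigma_0} v \cedge{\sigma_0} v'$ shows $N_{-\sigma_0}(v) \subseteq N_{-\sigma_0}(v')$, and symmetrically the reverse inclusion, so in fact $N_{-\sigma_0}(v) = N_{-\sigma_0}(v')$ as sets. Combining the two signs gives the desired degree equalities, completing the transposition identity and hence the induction. I would still verify the boundary bookkeeping — that $v' \notin N_{-\sigma_0}(v)$ (since $vv' \in E_{\sigma_0}$ forbids $vv' \in E_{-\sigma_0}$), so the (S2) deduction is genuine, and that the back-degree accounting after two successive deletions is correct — but these are routine.
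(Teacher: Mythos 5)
Your argument is correct, but it is organized differently from the paper's. The paper proves that transposing two adjacent positions $i-1,i$ anywhere in a SEO (when the swapped ordering is again a SEO) preserves the multiset, by showing via (E1) and (E2) applied to the restricted graph that the two swapped vertices have identical sets of earlier neighbours of each sign; it then bubble-sorts the top vertex of the second SEO into the top position of the first through a chain of such swaps, verifying via Lemma \ref{lem:BiElimOrd_and_M} and Remark \ref{rem:M_nondecreasing} that each intermediate ordering is still a SEO, and only then invokes induction on $G\setminus w_n$. You avoid the iteration entirely: you apply the induction hypothesis twice, to $G\setminus v$ (making $D(v)$ well defined for each $v\in S(G)$) and to $G\setminus\{v,v'\}$ (cancelling the common prefix of the two specially chosen orderings), so that only a single transposition at the very top remains; that you resolve by showing $N_{\sigma_0}(v)\setminus v' = N_{\sigma_0}(v')\setminus v$ from (S1) and $N_{-\sigma_0}(v)=N_{-\sigma_0}(v')$ from (S2), with the check that $v'\notin N_{-\sigma_0}(v)$ correctly noted. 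This is in essence the paper's computation with the sets $X_{v}^{\tau}$ specialized to positions $n-1$ and $n$, where the ``earlier neighbours'' become the full neighbourhoods minus the other vertex, so the combinatorial core is the same; what your arrangement buys is the elimination of the bubble-sort bookkeeping (no need to certify intermediate orderings as SEOs), at the mild price of invoking the induction hypothesis on two different subgraphs, and it yields as a by-product the slightly stronger local fact that two adjacent signed-simplicial vertices have equal signed degree pairs.
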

\begin{proof}
Let $\nu$ and $\mu$ be two SEOs on the same $G$, and put $n = |V|$, $v_i = \nu^{-1}(i) \in V$, $w_i = \mu^{-1}(i) \in V$, $V_i = \{v_1,\dots,v_i\}$ and $G_i = G|_{V_i}$.
First, we show that $\mathrm{d}^{(\nu)} = \mathrm{d}^{(\mu)}$ (as multisets) if $v_{i-1} = w_i$ and $v_i = w_{i-1}$ for some $2 \leq i \leq n$ and $v_j = w_j$ for any $1 \leq j \leq n$ other than $i-1$ and $i$.
Now we have $\mathrm{d}^{(\nu)}(j) = \mathrm{d}^{(\mu)}(j)$ for any $1 \leq j \leq n$ other than $i-1$ and $i$ by definition, therefore it suffices to show that either $(\mathrm{d}^{(\nu)}(i-1),\mathrm{d}^{(\nu)}(i)) = (\mathrm{d}^{(\mu)}(i-1),\mathrm{d}^{(\mu)}(i))$ or $(\mathrm{d}^{(\nu)}(i-1),\mathrm{d}^{(\nu)}(i)) = (\mathrm{d}^{(\mu)}(i),\mathrm{d}^{(\mu)}(i-1))$ holds.
If $v_{i-1} \noedge v_i$, then it follows immediately from the definition of $\mathrm{d}^{(\nu)}$ that $\mathrm{d}^{(\nu)}(i-1) = \mathrm{d}^{(\mu)}(i)$ and $\mathrm{d}^{(\nu)}(i) = \mathrm{d}^{(\mu)}(i-1)$.
On the other hand, suppose that $v_{i-1} \cedge{\sigma} v_i$ for some $\sigma \in \{+,-\}$.
Put $X_v^{\tau} = \{v_j \mid 1 \leq j \leq i-2, v \cedge{\tau} v_j\}$ for $v \in \{v_{i-1},v_i\}$ and $\tau \in \{+,-\}$.
Now we have $X_{v_i}^{\sigma} \subset X_{v_{i-1}}^{\sigma}$ by the condition (E1) for $G_i$ and $\nu|_{V_i}$.
Similarly, we have $X_{v_{i-1}}^{-\sigma} \subset X_{v_i}^{-\sigma}$ by the condition (E2) for $G_i$ and $\nu|_{V_i}$.
Moreover, we also have $X_{v_{i-1}}^{\sigma} \subset X_{v_i}^{\sigma}$ and $X_{v_i}^{-\sigma} \subset X_{v_{i-1}}^{-\sigma}$ by exchanging the roles of $\nu$ and $\mu$ (recall that $w_{i-1} = v_i$ and $w_i = v_{i-1}$); thus $X_{v_{i-1}}^{\pm\sigma} = X_{v_i}^{\pm\sigma}$, respectively.
This implies that
\begin{eqnarray*}
&&\mathrm{d}^{(\nu)}_{\pm\sigma}(i-1) = |X_{v_{i-1}}^{\pm\sigma}| = |X_{w_{i-1}}^{\pm\sigma}| = \mathrm{d}^{(\mu)}_{\pm\sigma}(i-1) \enspace,\\
&&\mathrm{d}^{(\nu)}_{-\sigma}(i) = |X_{v_i}^{-\sigma}| = |X_{w_i}^{-\sigma}| = \mathrm{d}^{(\mu)}_{-\sigma}(i) \enspace,\\
&&\mathrm{d}^{(\nu)}_{\sigma}(i) = |X_{v_i}^{\sigma}| + 1 = |X_{w_i}^{\sigma}| + 1 = \mathrm{d}^{(\mu)}_{\sigma}(i) \enspace,
\end{eqnarray*}
therefore the claim of this paragraph follows.\\
\qquad
To conclude the proof, choose the index $i$ with $v_i = w_n$.
Now if $i < n$, then we have $w_n \in S(G)$ and $v_{i+1} \in S(G_{i+1})$ by Lemma \ref{lem:BiElimOrd_and_M}(1); therefore $v_i = w_n \in S(G_{i+1})$ and $v_{i+1} \in S(G_{i+1} \setminus v_i)$ by Remark \ref{rem:M_nondecreasing}.
By Lemma \ref{lem:BiElimOrd_and_M}, it follows that the ordering $\nu'$ on $G$ with $\nu'(v_i) = i+1$, $\nu'(v_{i+1}) = i$ and $\nu'(v_j) = j$ for any $1 \leq j \leq n$ other than $i$ and $i+1$ is also a SEO on $G$; therefore $\mathrm{d}^{(\nu)} = \mathrm{d}^{(\nu')}$ by the previous paragraph.
Iterating this process, we obtain a SEO $\nu''$ on $G$ such that $\mathrm{d}^{(\nu)} = \mathrm{d}^{(\nu'')}$ and $\nu''(w_n) = n$; while it follows from induction on $n$ that $\mathrm{d}^{(\nu''|_{V'})} = \mathrm{d}^{(\mu|_{V'})}$ where $V' = V \setminus w_n$, therefore $\mathrm{d}^{(\nu'')} = \mathrm{d}^{(\mu)}$.
Hence we have $\mathrm{d}^{(\nu)} = \mathrm{d}^{(\mu)}$, concluding the proof.
\end{proof}

In the special case of non-signed graphs, Proposition \ref{prop:degree_sequence_invariant} coincides with a result of Donald J.\ Rose \cite[Theorem 4]{Ros}.
Note that $\mathrm{d}^{(\nu)}(1) = (0,0)$ for any case.
This proposition implies that for any map $f$, the multiset consisting of $f(\mathrm{d}^{(\nu)}(i))$ for all $1 \leq i \leq |V|$ is also an invariant for SE graphs.
In particular, we have the following corollary, that plays a significant role in \cite{ANN}:
\begin{cor}
\label{cor:degree_invariant}
For a signed-eliminable graph $G$, define a multiset $\widetilde{\mathrm{deg}}(G)$ as consisting of the values $\mathrm{d}^{(\nu)}_{+}(i) - \mathrm{d}^{(\nu)}_{-}(i)$ for all $1 \leq i \leq |V|$, where $\nu$ is a SEO on $G$.
Then $\widetilde{\mathrm{deg}}(G)$ is independent of the SEO $\nu$; therefore it is an invariant for signed-eliminable graphs.
\end{cor}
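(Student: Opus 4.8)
The plan is to derive this directly from Proposition~\ref{prop:degree_sequence_invariant} by applying a fixed map to each element of the invariant multiset. Concretely, let $f$ denote the map sending a pair $(a,b)$ of nonnegative integers to the integer $a - b$. Unwinding the definition of $\widetilde{\mathrm{deg}}(G)$, I observe that $\widetilde{\mathrm{deg}}(G)$ is precisely the multiset obtained by applying $f$ to each member of the multiset $\mathrm{d}^{(\nu)}$; that is, $\widetilde{\mathrm{deg}}(G)$ consists of the values $f(\mathrm{d}^{(\nu)}(i)) = \mathrm{d}^{(\nu)}_{+}(i) - \mathrm{d}^{(\nu)}_{-}(i)$ for $1 \leq i \leq |V|$.

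The key step is then the elementary observation about multisets that, for any fixed map $f$, the element-wise image of a multiset is determined by the multiset itself: if two multisets coincide, then so do their images under $f$, since the multiplicities are transported in the same way (this holds even though $f$ need not be injective). Because Proposition~\ref{prop:degree_sequence_invariant} guarantees that $\mathrm{d}^{(\nu)}$ is independent of the choice of the SEO $\nu$, it follows immediately that its element-wise image $\widetilde{\mathrm{deg}}(G)$ is likewise independent of $\nu$, and hence an invariant of $G$.

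I do not anticipate any real obstacle here, as the statement is an immediate consequence of the already established Proposition~\ref{prop:degree_sequence_invariant} together with the remark preceding the corollary (which records exactly this principle for an arbitrary map $f$). The only point deserving explicit mention is the general multiset observation above; beyond identifying $f$ as subtraction of the two coordinates and invoking the proposition, no further argument is needed.
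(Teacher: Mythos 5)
Your proposal is correct and matches the paper's own treatment: the paper derives the corollary exactly as you do, by noting after Proposition \ref{prop:degree_sequence_invariant} that for any map $f$ the multiset of values $f(\mathrm{d}^{(\nu)}(i))$ is an invariant, and then specializing to $f(a,b)=a-b$. No further comment is needed.
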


\section{Lemmas for Characterization of Signed-Eliminable Graphs}
\label{sec:characterization_preliminary}

In this section, we prove that any signed graph with at most three vertices is SE, we present special examples of signed graphs that are not SE, and we give some further auxiliary properties.
Let $G = (V,E)$ denote a signed graph throughout this section.

First, we have the following:
\begin{prop}
\label{prop:ThreeVertices}
If $|V| \leq 3$, then $G$ is always signed-eliminable.
\end{prop}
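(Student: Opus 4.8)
The plan is to avoid any explicit case analysis over the (up to $27$) possible sign patterns on a triangle, and instead to exploit the weight reformulation of SEOs recorded just after Definition~\ref{defn:signed-elimination}. Recall that, assigning $\omega(vv') \in \{+1,-1,0\}$ according to whether $vv' \in E_+$, $vv' \in E_-$, or $vv' \notin E$, an ordering $\nu$ is a SEO exactly when, for every triple $(u,v,w)$ with $\nu(u) < \nu(w) > \nu(v)$, the median of the three weights $\omega(uv)$, $\omega(vw)$, $\omega(uw)$ equals $\omega(uv)$, unless $u \noedge w \noedge v$. I would first dispose of the cases $|V| \le 2$: since the SEO condition only constrains triples of three (necessarily distinct) vertices, it is vacuous when $|V| \le 2$, so any ordering is a SEO there.

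The substance is the case $|V| = 3$, say $V = \{p,q,r\}$. The first step is to observe that, for any ordering $\nu$ on $V$, the only triples $(u,v,w)$ satisfying $\nu(u) < \nu(w) > \nu(v)$ are those in which $w$ is the $\nu$-largest vertex and $\{u,v\}$ is the remaining pair (the roles of $u$ and $v$ being interchangeable, which does not affect the condition because $\omega(uv) = \omega(vu)$ and the exceptional clause is likewise symmetric in $u$ and $v$). Thus only a single inequality needs to be checked, namely that $\omega(uv)$ is the median of the three edge-weights, where $uv$ is the edge joining the two non-top vertices.

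The key step is then to choose the top vertex appropriately. Since the median of three real numbers is always one of those numbers, at least one of the three edges $pq$, $qr$, $rp$ carries the median weight; I would fix such an edge $e$ and let the top vertex be the vertex of $V$ not lying on $e$. Ordering the remaining two vertices arbitrarily produces an ordering $\nu$ in which $uv = e$, so $\omega(uv)$ equals the median \emph{by construction} and the SEO condition holds (the exceptional clause $u \noedge w \noedge v$ is never even needed). Hence $\nu$ is a SEO and $G$ is signed-eliminable.

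I expect the only delicate points to be the bookkeeping in the first step of the $|V|=3$ case — confirming that no triple with a non-maximal $w$ can arise, and that swapping $u$ and $v$ leaves the condition unchanged — rather than anything genuinely hard. One could alternatively argue that $S(G) \neq \emptyset$ and invoke Lemma~\ref{lem:SEO_extension} inductively, but that route reintroduces the sign-pattern case work; once the weight reformulation is in hand, the observation that the median is attained by some edge makes the construction of a SEO immediate and uniform.
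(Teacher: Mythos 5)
Your proof is correct, and it takes a genuinely different route from the paper's. The paper argues via the signed-simplicial machinery: it splits into the case where $E_+$ or $E_-$ is empty (where Remark \ref{rem:Gi_chordal} reduces the claim to chordality, which is automatic on at most three vertices) and the mixed case, where the middle vertex of a path $u \cedge{+} v \cedge{-} w$ is observed to lie in $S(G)$ and Lemma \ref{lem:SEO_extension} plus induction finish the job. You instead work directly from the weight/median reformulation: on three vertices the only triples satisfying $\nu(u) < \nu(w) > \nu(v)$ are those with the $\nu$-maximal vertex in the role of $w$ (a triple with $\nu(w)=2$ would force $u=v$), so it suffices to place on top the vertex opposite an edge realizing the median weight, which always exists since the median of three numbers is one of them; your remarks that the condition is symmetric in $u$ and $v$ and that the exceptional clause is never needed are both accurate. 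The one dependency worth flagging is that the median reformulation is asserted but not proved in the paper; it does hold (a routine finite check over the possible weights of the two top edges), but strictly speaking your argument leans on that unproved remark, whereas the paper's proof uses only the raw conditions (E1)--(E2) through (S1)--(S2). Granting the reformulation, your construction is cleaner and uniform, avoiding the case split entirely; the paper's version has the advantage of exhibiting $S(G) \neq \emptyset$ explicitly, which is the shape of argument reused throughout Section \ref{sec:characterization}.
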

\begin{proof}
This is trivial if $|V| \leq 2$.
For the case $|V| = 3$, Remark \ref{rem:Gi_chordal} implies that $G$ is SE if $E_+ = \emptyset$ or $E_- = \emptyset$.
On the other hand, if $E_+ \neq \emptyset$ and $E_- = \emptyset$, then $N_+(v) \neq \emptyset$ and $N_-(v) \neq \emptyset$ for some $v \in V$, and now we have $v \in S(G)$.
Thus Lemma \ref{lem:SEO_extension} and induction on $|V|$ imply that $G$ is SE.
Hence the proof is concluded.
\end{proof}

Secondly, we give the following observations which will be used in our argument several times:
\begin{lem}
\label{lem:simplicial_expand}
Suppose that $V$ is a disjoint union of $V'$ and $V''$, and $v \in S(G|_{V'})$.
Suppose further that $\sigma \in \{+,-\}$, $\left[v,V''\right] \subset E_{\sigma}$, $V''$ is a clique in $G_{\sigma}$, and $\left[V',V''\right] \cap E_{-\sigma} = \emptyset$.
Then $v \in S(G)$ if the following condition is satisfied:
\begin{description}
\item[(D)] If $v \cedge{\sigma} w \in V'$ and $w' \in V''$, then $w \cedge{\sigma} w'$.
\end{description}
\end{lem}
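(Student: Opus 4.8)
The plan is to show directly that $v$ is signed-simplicial in the whole graph $G$, i.e.\ that it satisfies conditions (S1) and (S2) of Lemma \ref{lem:SignedSimplicial}. Throughout, I would exploit the fact that $v \in S(G|_{V'})$ already guarantees both conditions for $v$ inside the induced subgraph $G|_{V'}$, so that the only real work is to control what happens once vertices of $V''$ enter. The basic observation driving the whole argument is that, since $[v,V''] \subset E_\sigma$ and $E_\sigma \cap E_{-\sigma} = \emptyset$, the vertex $v$ has no $(-\sigma)$-neighbour in $V''$; hence $N_{-\sigma}[v] \subset V'$. Similarly, the hypothesis $[V',V''] \cap E_{-\sigma} = \emptyset$ says that no $(-\sigma)$-edge ever crosses between $V'$ and $V''$, which will make most of the mixed cases below vacuous.

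For (S1) I would treat the two signs separately. For the sign $-\sigma$, by the observation above $N_{-\sigma}[v] \subset V'$ coincides with the $(-\sigma)$-neighbourhood of $v$ in $G|_{V'}$, which is a clique in $G_{-\sigma}$ because $v \in S(G|_{V'})$; so this case is immediate. For the sign $\sigma$, I would write $N_\sigma[v] = (N_\sigma[v]\cap V') \cup V''$, using $[v,V''] \subset E_\sigma$ to see that all of $V''$ lies in $N_\sigma[v]$. The part inside $V'$ is a $\sigma$-clique (again from $v \in S(G|_{V'})$), and $V''$ is a $\sigma$-clique by hypothesis; it then remains to check the cross-pairs. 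Given $w \in N_\sigma(v) \cap V'$ and $w' \in V''$ we have $v \cedge{\sigma} w$, so condition (D) yields $w \cedge{\sigma} w'$, while the pairs $v w'$ are $\sigma$-edges by $[v,V''] \subset E_\sigma$. This is exactly the point at which (D) is needed, and it completes the verification that $N_\sigma[v]$ is a $\sigma$-clique.

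For (S2) I would argue by a short case analysis on the location of the auxiliary vertices. Consider first the sign $\sigma$, so we assume $u \cedge{-\sigma} w \cedge{\sigma} v$ and want $u \cedge{-\sigma} v$. If $w \in V''$ then $u \cedge{-\sigma} w$ is impossible: a vertex $u \in V'$ would give a forbidden $(-\sigma)$-edge of $[V',V'']$, while $u \in V''$ contradicts $V''$ being a $\sigma$-clique. If instead $w \in V'$, then an endpoint $u \in V''$ is again ruled out by $[V',V''] \cap E_{-\sigma} = \emptyset$, and the remaining case $u \in V'$ reduces to condition (S2) for $v$ inside $G|_{V'}$. The sign $-\sigma$ is analogous but contains the one genuinely nonvacuous mixed case: assuming $u \cedge{\sigma} w \cedge{-\sigma} v$, the observation $N_{-\sigma}[v] \subset V'$ forces $w \in V'$, and then either $u \in V'$ (handled by (S2) in $G|_{V'}$) or $u \in V''$, in which case $[v,V''] \subset E_\sigma$ directly gives $v \cedge{\sigma} u$, i.e.\ $u \cedge{\sigma} v$, as required.

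Putting these together establishes (S1) and (S2), hence $v \in S(G)$. I expect the main obstacle to be bookkeeping rather than any single hard step: one must correctly split each instance of (S1) and (S2) according to whether the relevant vertices lie in $V'$ or $V''$, and recognise that exactly two of the mixed subcases carry real content — the cross-pair check in (S1) for the sign $\sigma$ (resolved by (D)) and the subcase $u \in V''$ of (S2) for the sign $-\sigma$ (resolved by $[v,V''] \subset E_\sigma$) — while all the remaining mixed subcases are excluded by the disjointness of the two edge classes together with $[V',V''] \cap E_{-\sigma} = \emptyset$.
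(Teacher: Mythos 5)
Your proof is correct and takes essentially the same approach as the paper's: both verify (S1) by splitting $N_{\sigma}\left[v\right]$ and $N_{-\sigma}\left[v\right]$ according to $V'$ versus $V''$ (with (D) handling the cross-pairs), and verify (S2) by locating $u$ and $w$, the only nonvacuous mixed case being $u \in V''$, $w \in V'$, which forces the sign to be $-\sigma$ and is settled by $\left[v,V''\right] \subset E_{\sigma}$. The paper merely compresses the (S2) case analysis by noting that $w \in V''$ would give $N_{G_{-\sigma}}(w) = \emptyset$, but the content is identical to yours.
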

\begin{proof}
Put $G' = G|_{V'}$.
For condition (S1), the assumption implies that $N_{G_{-\sigma}}\left[v\right] = N_{G'_{-\sigma}}\left[v\right]$ is a clique in $G_{-\sigma}$, $N_{G_{\sigma}}\left[v\right] = N_{G'_{\sigma}}\left[v\right] \cup V''$, and the latter set is a clique in $G_{\sigma}$ by (D).
For condition (S2), suppose that $\tau \in \{+,-\}$ and $u \cedge{-\tau} w \cedge{\tau} v$.
It suffices to show that $u \cedge{-\tau} v$ when $\{u,w\} \not\subset V'$.
Now we have $w \in V'$; otherwise, $\tau = \sigma$ but $N_{G_{-\sigma}}(w) = \emptyset$, a contradiction.
This implies that $u \in V''$, $\tau = -\sigma$ (since $u \nocedge{-\sigma} w$) and $u \cedge{-\tau} v$.
Hence the claim holds.
\end{proof}
\begin{lem}
\label{lem:simplicial_expand_singleton}
Suppose that $v' \in V$ and $v \in S(G \setminus v')$.
\begin{enumerate}
\item Suppose further that $\sigma \in \{+,-\}$ and $v \cedge{\sigma} v'$.
Then $v \in S(G)$ if the following two conditions are satisfied:
\begin{description}
\item[(D'1)] If $v \cedge{\sigma} w \neq v'$, then $w \cedge{\sigma} v'$.
\item[(D'2)] If $v' \cedge{-\sigma} w$, then $v \edge w$.
\end{description}
\item Suppose further that $v \noedge v'$.
Then $v \in S(G)$ if the following condition is satisfied:
\begin{description}
\item[(D'')] $N_{G_{\tau}}(v) \cap N_{G_{-\tau}}(v') = \emptyset$ for each $\tau \in \{+,-\}$.
\end{description}
\end{enumerate}
\end{lem}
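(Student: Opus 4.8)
The plan is to verify directly that $v$ satisfies the two defining conditions (S1) and (S2) of a signed-simplicial vertex of $G$, exploiting the hypothesis $v \in S(G \setminus v')$, which already grants (S1) and (S2) for $v$ inside $G \setminus v'$. Since the edges of $G \setminus v'$ are precisely the edges of $G$ not incident to $v'$, every instance of (S1) or (S2) whose vertices all lie in $V \setminus v'$ is inherited verbatim from $G \setminus v'$ (a clique there is a clique in $G$, and an implied edge there is an edge in $G$). Hence in both parts the only instances that require fresh checking are those that actually involve $v'$.

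For part (1), where $v \cedge{\sigma} v'$, I would first dispatch (S1). On the $-\sigma$ side, $v \nocedge{-\sigma} v'$ forces $N_{G_{-\sigma}}[v] = N_{(G \setminus v')_{-\sigma}}[v]$, which is unchanged and hence still a clique. On the $\sigma$ side, $N_{G_{\sigma}}[v] = N_{(G \setminus v')_{\sigma}}[v] \cup v'$, and condition (D'1) states exactly that $v'$ is $\sigma$-adjacent to every member of $N_{(G \setminus v')_{\sigma}}(v)$, so the enlarged set is again a clique in $G_{\sigma}$. For (S2) I would run a case analysis on the role of $v'$ in a triple $u \cedge{-\tau} w \cedge{\tau} v$. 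If $v'$ occupies the $w$-position, then $v' \cedge{\tau} v$ together with $vv' \in E_{\sigma}$ forces $\tau = \sigma$, and the desired conclusion $u \cedge{-\sigma} v$ follows by first applying (D'2) to obtain $v \edge u$ and then using (D'1) to exclude the possibility $u \cedge{\sigma} v$. If $v'$ occupies the $u$-position, then either $\tau = -\sigma$, where the required conclusion $v' \cedge{\sigma} v$ is simply the standing hypothesis, or $\tau = \sigma$, where (D'1) shows the premise cannot occur, so (S2) holds vacuously.

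For part (2), where $v \noedge v'$, condition (S1) is immediate: since $v'$ lies in no $N_{G_{\sigma}}[v]$, both coloured closed neighbourhoods coincide with those in $G \setminus v'$ and remain cliques. For (S2) the only new triples are again those using $v'$. If $v'$ sits in the $w$-position, then $v' \cedge{\tau} v$ is impossible because $v \noedge v'$, so the premise fails; if $v'$ sits in the $u$-position, then a valid premise $v' \cedge{-\tau} w \cedge{\tau} v$ would place $w$ in $N_{G_{\tau}}(v) \cap N_{G_{-\tau}}(v')$, contradicting (D''). In either case (S2) holds vacuously on the new triples.

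I expect the main subtlety to be the $w = v'$ case of (S2) in part (1): there (D'2) supplies only the existence of an edge $v \edge u$ but not its sign, and one must additionally invoke (D'1) to rule out the wrong sign before the conclusion $u \cedge{-\sigma} v$ can be drawn. The remaining cases reduce to matching signs against the two single-edge hypotheses $v \cedge{\sigma} v'$ and $v \noedge v'$, and should be routine.
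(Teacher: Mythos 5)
Your proposal is correct and follows essentially the same route as the paper: it verifies (S1) and (S2) directly, observes that only triples involving $v'$ need fresh checking, and resolves the $w=v'$ case of (S2) in part (1) by combining (D'2) with a sign-exclusion via (D'1), exactly as the paper does. The remaining cases ($u=v'$ and part (2)) are also handled identically, so there is nothing to add.
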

\begin{proof}
Put $G' = G \setminus v'$.
For the former claim, since $v \in S(G')$, condition (D'1) implies that $N_{G_{-\sigma}}\left[v\right] = N_{G'_{-\sigma}}\left[v\right]$ and $N_{G_{\sigma}}\left[v\right] = N_{G'_{\sigma}}\left[v\right] \cup v'$ are cliques in $G_{-\sigma}$ and $G_{\sigma}$, respectively.
Moreover, if $\tau \in \{+,-\}$ and $u \cedge{-\tau} v' \cedge{\tau} v$, then $\tau = \sigma$ and $u \cedge{-\sigma} v$ by (D'1) and (D'2).
If $v' \cedge{-\tau} u \cedge{\tau} v$, then $\tau \neq \sigma$ by (D'1), thus $\tau = -\sigma$ and $v' \cedge{-\tau} v$.
Since $v \in S(G')$, these imply that condition (S2) is satisfied.
Thus we have $v \in S(G)$.\\
\qquad
For the latter claim, we have $N_{G_+}\left[v\right] = N_{G'_+}\left[v\right]$ and $N_{G_-}\left[v\right] = N_{G'_-}\left[v\right]$, therefore condition (S1) is satisfied since $v \in S(G')$.
On the other hand, since $v \noedge v'$, (D'') implies that if $u \cedge{\sigma} w \cedge{\sigma} v$, then $u,w \neq v'$.
Since $v \in S(G')$, this implies that condition (S2) is also satisfied.
Thus we have $v \in S(G)$.
Hence the proof is concluded.
\end{proof}

Here we introduce the following special signed graphs that are not SE; these graphs will play a significant role in our characterization:
\begin{defn}
\label{defn:forbidden_subgraphs}
{\rm
\begin{enumerate}
\item We say that a sequence $(v_1,v_2,\dots,v_n;w)$ of vertices with $n \geq 3$ is a ({\em $\sigma$-}){\em mountain}, where $\sigma \in \{+,-\}$, if $v_i \cedge{-\sigma} v_{i+1}$ for $1 \leq i \leq n-1$, $w \cedge{\sigma} v_i$ for $2 \leq i \leq n-1$, and any other pair of vertices is not joined by an edge (see the left-hand side of Figure \ref{fig:forbidden_subgraphs}).
\item We say that a sequence $(v_1,v_2,\dots,v_n;w_1,w_2)$ of vertices with $n \geq 2$ is a ({\em $\sigma$-}){\em hill}, where $\sigma \in \{+,-\}$, if $v_i \cedge{-\sigma} v_{i+1}$ for $1 \leq i \leq n-1$, $w_1 \cedge{\sigma} w_2$, $w_1 \cedge{\sigma} v_i$ for $1 \leq i \leq n-1$, $w_2 \cedge{\sigma} v_i$ for $2 \leq i \leq n$, and any other pair of vertices is not joined by an edge (see the right-hand side of Figure \ref{fig:forbidden_subgraphs}).
\end{enumerate}
}
\end{defn}
\begin{figure}[htbp]
\centering
\begin{picture}(110,56)(0,-2)
\put(10,10){\circle{8}}\put(14,9){\line(1,0){12}}\put(14,11){\line(1,0){12}}
\put(10,-2){\hbox to0pt{\hss$v_1$\hss}}
\put(30,10){\circle{8}}\put(34,9){\line(1,0){9}}\put(34,11){\line(1,0){9}}
\put(30,-2){\hbox to0pt{\hss$v_2$\hss}}
\put(47,7){$\cdots$}\put(63,9){\line(1,0){9}}\put(63,11){\line(1,0){9}}
\put(76,10){\circle{8}}\put(80,9){\line(1,0){12}}\put(80,11){\line(1,0){12}}
\put(76,-2){\hbox to0pt{\hss$v_{n-1}$\hss}}
\put(96,10){\circle{8}}
\put(96,-2){\hbox to0pt{\hss$v_n$\hss}}
\put(53,36){\circle{8}}\put(50,33){\line(-1,-1){19}}\put(56,33){\line(1,-1){19}}
\put(52,32){\line(-1,-2){6}}\put(54,32){\line(1,-2){6}}
\put(53,44){\hbox to0pt{\hss$w$\hss}}
\end{picture}
\quad
\begin{picture}(110,56)(0,-2)
\put(10,10){\circle{8}}\put(14,9){\line(1,0){12}}\put(14,11){\line(1,0){12}}
\put(10,-2){\hbox to0pt{\hss$v_1$\hss}}
\put(30,10){\circle{8}}\put(34,9){\line(1,0){9}}\put(34,11){\line(1,0){9}}
\put(30,-2){\hbox to0pt{\hss$v_2$\hss}}
\put(47,7){$\cdots$}\put(63,9){\line(1,0){9}}\put(63,11){\line(1,0){9}}
\put(76,10){\circle{8}}\put(80,9){\line(1,0){12}}\put(80,11){\line(1,0){12}}
\put(76,-2){\hbox to0pt{\hss$v_{n-1}$\hss}}
\put(96,10){\circle{8}}
\put(96,-2){\hbox to0pt{\hss$v_n$\hss}}
\put(41,36){\circle{8}}\put(65,36){\circle{8}}\put(45,36){\line(1,0){16}}
\put(41,44){\hbox to0pt{\hss$w_1$\hss}}\put(65,44){\hbox to0pt{\hss$w_2$\hss}}
\put(38,33){\line(-4,-3){26}}\put(39,32){\line(-1,-2){9}}\put(42,32){\line(1,-2){7}}\put(44,33){\line(3,-2){30}}
\put(61,34){\line(-4,-3){28}}\put(63,32){\line(-1,-2){7}}\put(66,32){\line(1,-2){9}}\put(68,33){\line(3,-2){28}}
\end{picture}
\caption{Examples of non-SE graphs}
\label{fig:forbidden_subgraphs}
\end{figure}
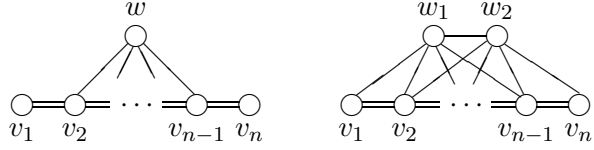
\begin{lem}
\label{lem:forbidden_subgraphs}
Any mountain and any hill are not signed-eliminable.
\end{lem}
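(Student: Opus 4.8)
The plan is to prove the contrapositive suggested by Lemma \ref{lem:SignedSimplicial}: since $S(G) \neq \emptyset$ whenever $G$ is signed-eliminable (Definition \ref{defn:M}), it suffices to show that a mountain and a hill have \emph{no} signed-simplicial vertex, i.e.\ that $S(G) = \emptyset$. I would therefore go through the vertices one at a time and, for each, exhibit a failure of condition (S1) or (S2) from Lemma \ref{lem:SignedSimplicial}. Throughout, $\sigma$ denotes the sign used in Definition \ref{defn:forbidden_subgraphs}, so that $v_1 \cedge{-\sigma} \cdots \cedge{-\sigma} v_n$ is a $(-\sigma)$-path and the remaining edges carry the sign $\sigma$.

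The interior path vertices are immediate. In both families, for $2 \le j \le n-1$ the vertices $v_{j-1}$ and $v_{j+1}$ both belong to $N_{-\sigma}\left[v_j\right]$ while $v_{j-1} \nocedge{-\sigma} v_{j+1}$, so $N_{-\sigma}\left[v_j\right]$ is not a clique of $G_{-\sigma}$ and (S1) fails. What remains are the path endpoints $v_1, v_n$ and the \lq\lq$\sigma$-vertices'' ($w$ for a mountain; $w_1, w_2$ for a hill), and here (S1) generally holds, so the argument must go through (S2) or a sharper use of (S1). For a mountain I would argue: at $v_1$, the triple $w \cedge{\sigma} v_2 \cedge{-\sigma} v_1$ with $w \nocedge{\sigma} v_1$ violates (S2), and $v_n$ is symmetric via $v_{n-1}$; at $w$, the triple $v_1 \cedge{-\sigma} v_2 \cedge{\sigma} w$ with $v_1 \nocedge{-\sigma} w$ violates (S2). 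For a hill I would argue: at $w_1$, both $w_2$ and $v_1$ lie in $N_{\sigma}\left[w_1\right]$ while $w_2 \nocedge{\sigma} v_1$, so (S1) fails, and $w_2$ is symmetric; at $v_1$, the triple $w_2 \cedge{\sigma} v_2 \cedge{-\sigma} v_1$ with $w_2 \nocedge{\sigma} v_1$ violates (S2), and $v_n$ is symmetric. Since every vertex is then accounted for, $S(G) = \emptyset$ in both cases.

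The only point requiring care---and the closest thing to an obstacle---is confirming that the witness vertices genuinely exist, which is precisely where the range hypotheses enter. The mountain computations at $v_1$, $v_n$ and $w$ all rely on $w$ having a $\sigma$-neighbor of the form $v_2$ or $v_{n-1}$, i.e.\ on $2 \le n-1$, so they require exactly $n \ge 3$; the hill computations need only $w_2 \cedge{\sigma} v_2$ (valid since $2 \le n$) and $v_1 \in N_{\sigma}\left[w_1\right]$ (valid since $1 \le n-1$), so $n \ge 2$ suffices. To avoid redoing the endpoint cases, I would explicitly invoke the symmetry that reverses the path $v_1 \cdots v_n$ and, for hills, simultaneously swaps $w_1 \leftrightarrow w_2$; this is a sign-preserving automorphism of the configuration that exchanges $v_1 \leftrightarrow v_n$, so handling $v_1$ (and $w_1$) settles $v_n$ (and $w_2$) at once. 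Beyond this bookkeeping the proof is a routine finite verification of (S1)/(S2) at each vertex, with no deeper difficulty.
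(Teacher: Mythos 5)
Your proposal is correct and follows essentially the same route as the paper: both show $S(G)=\emptyset$ by exhibiting, at each vertex, a failure of (S1) (at the interior $v_j$ and, for hills, at $w_1,w_2$) or of (S2) (at $v_1$, $v_n$, and the mountain's $w$), using the very same witness triples. The extra care you take over the range hypotheses $n\ge 3$ and $n\ge 2$ and the reversal symmetry is sound bookkeeping that the paper leaves implicit.
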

\begin{proof}
Let $G$ be a $\sigma$-mountain or a $\sigma$-hill in Figure \ref{fig:forbidden_subgraphs} for $\sigma \in \{+,-\}$.
Then it suffices to show that $S(G) = \emptyset$ (see Definition \ref{defn:M}).
Now for the case of $\sigma$-mountain, $N_{-\sigma}\left[v_i\right]$ with $2 \leq i \leq n-1$ is not a clique in $G_{-\sigma}$, while none of $w$, $v_1$ and $v_n$ satisfies condition (S2) (focus on the subgraphs $v_1 \cedge{-\sigma} v_2 \cedge{\sigma} w$ and $v_n \cedge{-\sigma} v_{n-1} \cedge{\sigma} w$).
On the other hand, for the case of $\sigma$-hill, $N_{\sigma}\left[w_i\right]$ and $N_{-\sigma}\left[v_j\right]$ are not cliques in $G_{\sigma}$ and $G_{-\sigma}$, respectively, for $1 \leq i\ leq 2$ and $2 \leq j \leq n-1$, while neither $v_1$ nor $v_n$ satisfies condition (S2) (focus on the subgraphs $w_2 \cedge{\sigma} v_2 \cedge{-\sigma} v_1$ and $w_1 \cedge{\sigma} v_{n-1} \cedge{-\sigma} v_n$).
Thus we have $S(G) = \emptyset$ in both cases.
\end{proof}

Moreover, we present a key lemma in our argument:
\begin{lem}
\label{lem:AlternatingPath}
Suppose that $G$ is signed-eliminable and $u$, $v$, $w$ and $x$ are distinct vertices of $G$.
If $\sigma \in \{+,-\}$ and $u \cedge{\sigma} v \cedge{-\sigma} w \cedge{\sigma} x$, then $u \cedge{\sigma} x$, and we have either $u \cedge{\sigma} w$ or $v \cedge{\sigma} x$.
\end{lem}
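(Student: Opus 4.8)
The plan is to reduce to the induced subgraph $H = G|_{\{u,v,w,x\}}$, which is again signed-eliminable by Remark \ref{rem:subgraph_bieliminable}, and then to exploit the signed-simplicial structure of its $\nu$-maximal vertex for a suitable SEO $\nu$ on $H$. Only the three edges $u \cedge{\sigma} v$, $v \cedge{-\sigma} w$ and $w \cedge{\sigma} x$ are assumed (the pairs $uw$, $vx$ and $ux$ being unconstrained). Since both the hypothesis and the conclusion are invariant under the relabeling $(u,v,w,x) \mapsto (x,w,v,u)$ — which sends $u \cedge{\sigma} v \cedge{-\sigma} w \cedge{\sigma} x$ to itself and swaps the disjunction ``$u \cedge{\sigma} w$ or $v \cedge{\sigma} x$'' with ``$v \cedge{\sigma} x$ or $u \cedge{\sigma} w$'' — it suffices to treat the two cases in which the last vertex of $\nu$ among the four is $v$ or is $u$; the cases of $w$ and of $x$ then follow by this symmetry. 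Throughout, let $m$ denote that last vertex, which is signed-simplicial in $H$ by Lemma \ref{lem:SignedSimplicial}, so that conditions (S1) and (S2) are available for $m$.

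First I would treat the case $m = v$. Applying (S2) to the chain $x \cedge{\sigma} w \cedge{-\sigma} v$ (that is, the edge $w \cedge{-\sigma} v$ together with $w \cedge{\sigma} x$) yields $v \cedge{\sigma} x$, which already secures the second alternative. Then $u$ and $x$ both lie in $N_{\sigma}[v]$, which is a clique of $H_{\sigma}$ by (S1); as $u \neq x$, this forces $u \cedge{\sigma} x$, completing this case.

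The more delicate case is $m = u$, where (S1) and (S2) must be combined in a chain. Applying (S2) to $w \cedge{-\sigma} v \cedge{\sigma} u$ first produces the \emph{opposite-sign} edge $u \cedge{-\sigma} w$. Feeding this new edge back into (S2), now in the chain $x \cedge{\sigma} w \cedge{-\sigma} u$, produces $u \cedge{\sigma} x$, the first desired conclusion. Finally, since $v$ and $x$ both belong to the clique $N_{\sigma}[u]$ of $H_{\sigma}$ (using $u \cedge{\sigma} v$ together with the freshly obtained $u \cedge{\sigma} x$), condition (S1) gives $v \cedge{\sigma} x$, which is the required alternative here.

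I expect the case $m = u$ (equivalently $m = x$) to be the main obstacle: unlike the case $m = v$, one cannot read off a $\sigma$-edge from a single application of the signed-simplicial conditions, and the argument only closes after first deducing the auxiliary $-\sigma$-edge $u \cedge{-\sigma} w$ and then invoking (S2) a second time. Some care is also needed to justify the relabeling step even though $H$ need not admit a graph automorphism: it is the symmetry of the \emph{statement}, not of the graph, that legitimately reduces the four cases to two.
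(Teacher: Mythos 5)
Your proof is correct and follows essentially the same route as the paper's: restrict to the induced subgraph on $\{u,v,w,x\}$, use the symmetry $(u,v,w,x)\mapsto(x,w,v,u)$ of the statement to reduce to the cases where the signed-simplicial (i.e.\ $\nu$-maximal) vertex is $u$ or $v$, and then apply (S2) once (for $v$) or twice (for $u$, via the auxiliary edge $u \cedge{-\sigma} w$) followed by (S1). The paper's proof is word-for-word the same case analysis, phrased in terms of $S(G)$ rather than the last vertex of a SEO.
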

\begin{proof}
By Remark \ref{rem:subgraph_bieliminable} and symmetry, we may assume without loss of generality that $V = \{u,v,w,x\}$, and $u \in S(G)$ or $v \in S(G)$.
If $u \in S(G)$, then we have $u \cedge{-\sigma} w$ by (S2) since $w \cedge{-\sigma} v \cedge{\sigma} u$, we have $u \cedge{\sigma} x$ by (S2) since $x \cedge{\sigma} w \cedge{-\sigma} u$, and we have $v \cedge{\sigma} x$ by (S1) since $v,x \in N_{\sigma}(u)$.
On the other hand, if $v \in S(G)$, then we have $v \cedge{\sigma} x$ by (S2) since $x \cedge{\sigma} w \cedge{-\sigma} v$, and we have $u \cedge{\sigma} x$ by (S1) since $u,x \in N_{\sigma}(v)$.
Thus the claim holds in any case.
\end{proof}

\section{A Full Characterization of Signed-Eliminable Graphs}
\label{sec:characterization}

In this section, we state and prove a full characterization of SE graphs, which is the main contribution of this paper.

\subsection{The Statement}
\label{subsec:characterization_statement}

Before giving our characterization, we introduce the following terminology:
We call an induced path in $G$ of the form $u \cedge{\sigma} v \cedge{-\sigma} w \cedge{\sigma} x$, where $\sigma \in \{+,-\}$, an {\em alternating $4$-path}.
Then our full characterization is described as the following theorem:
\begin{thm}
\label{thm:characterization}
Let $G$ be a signed graph.
Then $G$ is signed-eliminable if and only if all of the following three conditions are satisfied:
\begin{description}
\item[(C1)] Both $G_+$ and $G_-$ are chordal.
\item[(C2)] For any alternating $4$-path $u \cedge{\sigma} v \cedge{-\sigma} w \cedge{\sigma} x$ in $G$ (see above for terminology), we have either $w \cedge{\sigma} u \cedge{\sigma} x$ or $u \cedge{\sigma} x \cedge{\sigma} v$.
\item[(C3)] $G$ contains no mountain and no hill as an induced subgraph (see Definition \ref{defn:forbidden_subgraphs} for terminology).
\end{description}
\end{thm}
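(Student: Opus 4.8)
The plan is to prove both directions separately, using the machinery of signed-simplicial vertices developed in Section \ref{sec:SEO}. For the necessity direction (``only if''), suppose $G$ is signed-eliminable. Condition (C1) is already established in Remark \ref{rem:Gi_chordal}. For (C2), given an alternating $4$-path $u \cedge{\sigma} v \cedge{-\sigma} w \cedge{\sigma} x$, I would apply Lemma \ref{lem:AlternatingPath} directly: it yields $u \cedge{\sigma} x$ together with the disjunction ``$u \cedge{\sigma} w$ or $v \cedge{\sigma} x$,'' which is exactly the conclusion of (C2) once combined with the already-known edges. Finally (C3) is immediate from Lemma \ref{lem:forbidden_subgraphs}, since mountains and hills are not signed-eliminable while the property of being SE is closed under induced subgraphs (Remark \ref{rem:subgraph_bieliminable}).

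The substance of the theorem is the sufficiency direction (``if''), which I would prove by induction on $|V|$ using Theorem \ref{thm:GreedyAlgorithm}. The base cases $|V| \leq 3$ are covered by Proposition \ref{prop:ThreeVertices}. For the inductive step, since the three conditions (C1)--(C3) are clearly inherited by induced subgraphs, it suffices by Lemma \ref{lem:SEO_extension} (or the greedy algorithm of Theorem \ref{thm:GreedyAlgorithm}) to exhibit a single signed-simplicial vertex $v \in S(G)$; then $G \setminus v$ satisfies (C1)--(C3), hence is SE by induction, and $v$ extends a SEO on $G \setminus v$ to one on $G$. Thus the entire problem reduces to the following existence statement: \emph{any signed graph satisfying (C1)--(C3) has a signed-simplicial vertex.} I would further reduce to the connected case (Remark \ref{rem:component_bieliminable}), and then split according to whether both edge signs occur.

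To produce the signed-simplicial vertex, I would start from the chordality of $G_\sigma$. By (C1) and Lemma \ref{lem:chordal_link_clique}, there is no shortage of candidate vertices that are simplicial in one color; the task is to find one that is simplicial in \emph{both} colors and additionally satisfies the cross-sign condition (S2). The natural strategy is to take a vertex $v$ that is simplicial in, say, $G_-$ (giving half of (S1)) and then use the construction lemmas \ref{lem:simplicial_expand} and \ref{lem:simplicial_expand_singleton} to ``promote'' it to a full signed-simplicial vertex, verifying their hypotheses (the clique condition (D), or conditions (D'1), (D'2), (D'')) against (C2) and (C3). The key observation driving the search is Lemma \ref{lem:color_border}, which, when both signs are present, locates a two-colored configuration $v \cedge{+} v' \cedge{-} v''$; this is the local seed around which obstructions to being signed-simplicial must organize themselves, and conditions (C2), (C3) are precisely designed to forbid those obstructions from propagating.

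The main obstacle will be the combinatorial bookkeeping in this existence argument: a vertex simplicial in both colors need not automatically satisfy (S2), and a vertex satisfying (S2) locally need not be simplicial globally, so the delicate part is to choose the candidate vertex so that failures of (S2) would force an alternating $4$-path violating (C2) or would force a mountain or hill violating (C3). In particular I expect the hardest case to be when $v$ lies at the interface between the two colors (as in Lemma \ref{lem:color_border}), where one must argue that the chordal structure of each $G_\sigma$, together with the cross-sign constraints, pins down a vertex at the ``end'' of the two-colored region whose neighborhoods in both colors are cliques and which cannot sit at the center of a forbidden $u \cedge{-\tau} w \cedge{\tau} v$ pattern. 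Managing these case distinctions --- likely organized by the length and shape of maximal alternating paths emanating from the color border --- is where the real work of the proof resides.
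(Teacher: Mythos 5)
Your treatment of the ``only if'' direction is correct and coincides with the paper's: (C1) from Remark \ref{rem:Gi_chordal}, (C2) from Lemma \ref{lem:AlternatingPath}, and (C3) from Lemma \ref{lem:forbidden_subgraphs} together with Remark \ref{rem:subgraph_bieliminable}. Your reduction of the ``if'' direction is also the paper's: induct on $|V|$, note that (C1)--(C3) pass to induced subgraphs, reduce to connected $G$ with both signs present, and reduce via Lemma \ref{lem:SEO_extension} to showing $S(G) \neq \emptyset$. Up to this point you are on the paper's track.

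The genuine gap is that the existence statement ``any connected graph satisfying (C1)--(C3) with both signs present has a signed-simplicial vertex'' is where essentially all of the content of the theorem lives, and your plan for it is a strategy announcement rather than an argument. Moreover, the specific strategy you propose --- take a vertex simplicial in one color, say $G_-$, and ``promote'' it using Lemmas \ref{lem:simplicial_expand} and \ref{lem:simplicial_expand_singleton} --- is not what the paper does and is unlikely to close as stated: a vertex simplicial in $G_-$ can fail (S1) for $G_+$ arbitrarily badly, and the promotion lemmas have restrictive hypotheses (e.g.\ $\left[V',V''\right] \cap E_{-\sigma} = \emptyset$ and $V''$ a clique in $G_{\sigma}$) that an arbitrary $G_-$-simplicial vertex gives you no handle on. The paper's actual engine is different: a localization lemma (Lemma \ref{lem:simplicial_interior}) showing that if $W$ is the union of components of $G_\sigma|_{V''}$ meeting a seed set and $\overline{W}$ is its one-step $(-\sigma)$-closure, then $S(G|_{\overline{W}}) \subset W \cap S(G|_{V''})$; this lets one pull a signed-simplicial vertex of a strictly smaller induced subgraph (supplied by the induction hypothesis) up to the whole graph. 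The case analysis is organized not by ``maximal alternating paths from the color border'' but by whether the triangle condition ``$u \cedge{+} u' \cedge{-} u''$ implies $u \edge u''$'' holds globally: if it does, a maximal nested clique-like sequence yields a vertex $w$ with $N_+(w) = \emptyset$ or $N_-(w) = \emptyset$ and one works in $V \setminus w$; if it fails, one fixes nonadjacent $v_+,v_-$ with $W = N_+(v_+) \cap N_-(v_-) \neq \emptyset$, builds the decomposition $V' = W \cup \{v_+,v_-\} \cup X_+ \cup Y_+ \cup X_- \cup Y_-$ from monochromatic closures, proves roughly six structural lemmas about which edges can cross the pieces, and handles separately the subcases $V' \neq V$ and $V' = V$ (the latter needing mountains and hills, i.e.\ (C3), in an essential way, e.g.\ in Lemmas \ref{lem:proof_main_step2_1} and \ref{lem:proof_main_step2_2}). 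None of this machinery is anticipated in your proposal, so the sufficiency direction remains unproved.
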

The \lq\lq only if'' part of Theorem \ref{thm:characterization} follows from Remark \ref{rem:Gi_chordal}, Lemma \ref{lem:AlternatingPath}, Remark \ref{rem:subgraph_bieliminable} and Lemma \ref{lem:forbidden_subgraphs}.
In the rest of this section, we prove the \lq\lq if'' part; that is, $G$ is SE if the conditions (C1)--(C3) are satisfied.
\begin{rem}
\label{rem:characterization_equivalent}
{\rm 
In a previous version of this paper, the characterization was stated in the following form: A signed graph $G$ is signed-eliminable if and only if (C1) and (C3) are satisfied and any induced subgraph of $G$ with four vertices is signed-eliminable.
This characterization is also valid by Theorem \ref{thm:characterization}, Remark \ref{rem:subgraph_bieliminable} and Lemma \ref{lem:AlternatingPath}.
(Note that we do not use this fact in our proof of Theorem \ref{thm:characterization}.)
}
\end{rem}

\subsection{Some Lemmas}
\label{subsec:proof_lemmas}

This subsection is devoted to present the following lemmas that will be used in our proof of the main theorem:
\begin{lem}
\label{lem:induced_path}
Suppose that the conditions (C1) and (C2) in Theorem \ref{thm:characterization} are satisfied.
If $\sigma \in \{+,-\}$, $k \geq 2$, $x_1x_2 \cdots x_k$ is an induced path in $G_{\sigma}$, $x_1 \cedge{\sigma} x_0 \neq x_2$ and $x_0 \noedge x_2$, then $x_0x_1 \cdots x_k$ is also an induced path in $G$.
\end{lem}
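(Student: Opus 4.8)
The plan is to verify the three defining properties of an induced path in $G$ directly for $x_0 x_1 \cdots x_k$: distinctness of the vertices, adjacency of consecutive ones, and non-adjacency (by an edge of either sign) of non-consecutive ones. The middle property is immediate from $x_0 \cedge{\sigma} x_1$ and the hypothesis on $x_1 \cdots x_k$. For distinctness the only nontrivial point is $x_0 \neq x_j$ for $j \geq 3$: were $x_0 = x_j$, then $x_1 \cedge{\sigma} x_j$ would contradict that $x_1 \cdots x_k$ is induced in $G_{\sigma}$; together with $x_0 \cedge{\sigma} x_1$ and $x_0 \neq x_2$ this gives all of distinctness. The non-adjacency condition is the heart of the matter, and I would split it according to the sign of the potential edge.

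First I would rule out $\sigma$-edges from $x_0$ to non-neighbours, proving $x_0 \nocedge{\sigma} x_j$ for all $j \geq 2$; with the hypothesis this shows that $x_0 x_1 \cdots x_k$ is already an induced path in $G_{\sigma}$. The case $j = 2$ is exactly $x_0 \noedge x_2$. For $j \geq 3$ I would assume $x_0 \cedge{\sigma} x_j$ and take the least such $j$; minimality together with $x_0 \noedge x_2$ makes $x_0 x_1 \cdots x_j x_0$ an induced cycle of length at least four in $G_{\sigma}$, contradicting the chordality of $G_{\sigma}$ from (C1). Securing this induced-in-$G_{\sigma}$ property now is essential, as it is precisely what the subsequent $-\sigma$ arguments will contradict.

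Next I would eliminate every remaining $-\sigma$-edge using (C2). For the edges out of $x_0$: assuming $x_0 \cedge{-\sigma} x_j$ with $j \geq 3$, the four distinct vertices $x_1,x_0,x_j,x_{j-1}$ satisfy $x_1 \cedge{\sigma} x_0 \cedge{-\sigma} x_j \cedge{\sigma} x_{j-1}$, so (C2) forces $x_1 \cedge{\sigma} x_{j-1}$; for $j \geq 4$ this contradicts the induced $\sigma$-path, while for the boundary case $j = 3$ the forced edge $x_1 \cedge{\sigma} x_2$ is harmless and I must instead use the second assertion of (C2), that $x_3 \cedge{\sigma} x_1$ or $x_2 \cedge{\sigma} x_0$ holds, both of which fail. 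For an internal $-\sigma$-chord $x_i \cedge{-\sigma} x_j$ with $j \geq i+2$, the configuration $x_{i-1} \cedge{\sigma} x_i \cedge{-\sigma} x_j \cedge{\sigma} x_{j-1}$ together with (C2) forces $x_{i-1} \cedge{\sigma} x_{j-1}$, contradicting the induced $\sigma$-path since $|i-j| \geq 2$; here $x_0$ is exactly what lets me take the predecessor $x_{i-1}$ when $i = 1$. Combining these claims shows every non-consecutive pair is joined by no edge, which is the assertion.

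The main obstacle I anticipate is bookkeeping rather than depth: I must ensure that each quadruple to which (C2) is applied consists of four genuinely distinct vertices — this is what forces the hypothesis $x_0 \neq x_2$ and the separate treatment of $j = 3$ above — and that the predecessor $x_{i-1}$ is available, which is the reason $x_0$ is attached at the head $x_1$. Conceptually the proof is clean, with (C1) killing the same-sign shortcuts and (C2) killing every sign-crossing configuration, but the steps must be taken in this order, since the $-\sigma$ arguments all rely on the induced $\sigma$-path $x_0 x_1 \cdots x_k$ established first.
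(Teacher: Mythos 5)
Your proof is correct and rests on exactly the same two mechanisms as the paper's: a chordless cycle through $x_0$ in $G_{\sigma}$ contradicting (C1), and the configuration $x_{i-1} \cedge{\sigma} x_i \cedge{-\sigma} x_j \cedge{\sigma} x_{j-1}$ fed to (C2) to force a $\sigma$-edge violating inducedness. The only differences are organizational --- the paper inducts on $k$ and checks just the chords at $x_k$, whereas you verify all non-consecutive pairs directly, with your minimal-$j$ argument replacing the induction in the (C1) step --- and your applications of (C2) to configurations that are not literally induced paths (e.g.\ when $j=i+2$) mirror precisely how the paper itself uses (C2) in its own proof of this lemma.
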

\begin{proof}
We proceed the proof by induction on $k$.
The case $k = 2$ is trivial, therefore suppose that $k \geq 3$ and $x_0x_1 \cdots x_{k-1}$ is an induced path in $G$.
Note that $x_0 \neq x_i$ for any $1 \leq i \leq k$ by the assumption.
Now if $x_k \cedge{-\sigma} x_i$ for some $1 \leq i \leq k-2$, then we have $x_{i-1} \cedge{\sigma} x_i \cedge{-\sigma} x_k \cedge{\sigma} x_{k-1}$ and $x_{i-1} \nocedge{\sigma} x_{k-1}$, contradicting (C2).
If $x_k \cedge{-\sigma} x_0$, then we have $x_{k-1} \cedge{\sigma} x_k \cedge{-\sigma} x_0 \cedge{\sigma} x_1$, while $x_{k-1} \nocedge{\sigma} x_0$ and $x_k \nocedge{\sigma} x_1$ by the assumption and the induction hypothesis.
This contradicts (C2).
Moreover, if $x_k \cedge{\sigma} x_0$, then $x_0x_1 \cdots x_kx_0$ is a cycle in $G_{\sigma}$ with at least four vertices, while this cycle has no chord since both $x_0x_1 \cdots x_{k-1}$ and $x_1x_2 \cdots x_k$ are induced paths in $G_{\sigma}$ by the assumption and the induction hypothesis.
This contradicts (C1).
Hence $x_0x_1 \cdots x_k$ is also an induced path in $G$, concluding the proof.
\end{proof}
Here we introduce the following notations.
For subsets $V' \subset V''$ of $V$ and $\sigma \in \{+,-\}$, we define $\mathrm{cl}_{\sigma}(V';V'')$ to be the union of vertex sets of the connected components of $G_{\sigma}|_{V''}$ that have nonempty intersection with $V'$, and define
\begin{eqnarray*}
\overline{\mathrm{cl}}_{\sigma}(V';V'')
&{}={}& \mathrm{cl}_{\sigma}(V';V'') \cup \{v \in V'' \mid N_{-\sigma}(v) \cap \mathrm{cl}_{\sigma}(V';V'') \neq \emptyset \} \enspace,\\
\partial_{\sigma}(V';V'')
&{}={}& \overline{\mathrm{cl}}_{\sigma}(V';V'') \setminus \mathrm{cl}_{\sigma}(V';V'') \enspace.
\end{eqnarray*}
\begin{lem}
\label{lem:simplicial_interior}
Let $V' \subset V'' \subset V$, $\sigma \in \{+,-\}$ and put $W = \mathrm{cl}_{\sigma}(V';V'')$ and $\overline{W} = \overline{\mathrm{cl}}_{\sigma}(V';V'')$.
Suppose that the condition (C2) is satisfied and every connected component of $G_{\sigma}|_W$ contains at least two vertices.
Then $S(G|_{\overline{W}}) \subset W$ and $S(G|_{\overline{W}}) \subset S(G|_{V''})$.
\end{lem}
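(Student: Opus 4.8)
The plan is to prove the two containments in turn, relying on the structural fact that $W := \mathrm{cl}_{\sigma}(V';V'')$ is a union of \emph{entire} connected components of $G_{\sigma}|_{V''}$. Consequently no $\sigma$-edge of $G|_{V''}$ joins a vertex of $W$ to a vertex of $V'' \setminus W$, while every $(-\sigma)$-edge leaving $W$ has its other endpoint in $\partial_{\sigma} := \partial_{\sigma}(V';V'')$ by the definition of $\overline{W} := \overline{\mathrm{cl}}_{\sigma}(V';V'')$. In particular $N_{G|_{V''}}[x] \subseteq \overline{W}$ for every $x \in W$, so the closed neighborhood of a vertex of $W$—together with its $\sigma$- and $(-\sigma)$-parts—is the same whether computed in $G|_{\overline{W}}$ or in $G|_{V''}$.

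For the first containment, suppose toward a contradiction that $v \in S(G|_{\overline{W}})$ lies in $\partial_{\sigma}$. By the definition of $\partial_{\sigma}$ there is $a \in W$ with $v \cedge{-\sigma} a$, and since the component of $a$ in $G_{\sigma}|_W$ has at least two vertices there is $b \in W$ with $a \cedge{\sigma} b$. The three vertices $b,a,v$ all lie in $\overline{W}$, and $b \cedge{\sigma} a \cedge{-\sigma} v$ is exactly the hypothesis of condition (S2) for the signed-simplicial vertex $v$ of $G|_{\overline{W}}$, which therefore forces $b \cedge{\sigma} v$. But $b \in W$ and $v \notin W$, so this $\sigma$-edge contradicts $W$ being a union of whole $\sigma$-components. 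Hence $S(G|_{\overline{W}}) \subseteq W$.

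For the second containment fix $v \in S(G|_{\overline{W}})$; by the first part $v \in W$, so the neighborhood agreement above gives $N_{G|_{V''}}[v] = N_{G|_{\overline{W}}}[v]$ and similarly for each sign, whence condition (S1) for $v$ in $G|_{V''}$ is inherited from (S1) in $G|_{\overline{W}}$. To check (S2), take a configuration $u \cedge{-\tau} w \cedge{\tau} v$ with $u,w \in V''$. Since $w \cedge{\tau} v$ we have $w \in N_{G|_{V''}}(v) \subseteq \overline{W}$. If $\tau = \sigma$ then $w$ is a $\sigma$-neighbor of $v \in W$, hence $w \in W$, and then $u \cedge{-\sigma} w$ forces $u \in \overline{W}$, so the desired $u \cedge{-\sigma} v$ follows from (S2) in $G|_{\overline{W}}$; the same reduction applies when $\tau = -\sigma$ and $w \in W$. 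Thus the only case that does not reduce at once to $G|_{\overline{W}}$ is $\tau = -\sigma$ with $w \in \partial_{\sigma}$.

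This last case is where I expect the main obstacle, and where condition (C2) must be used. Here $w \cedge{-\sigma} v$ and we must show $u \cedge{\sigma} v$ for every $u$ with $u \cedge{\sigma} w$; since $N_{G|_{V''}}(v) \subseteq \overline{W}$ makes $u \noedge v$ whenever $u \notin \overline{W}$, this amounts to ruling out any $\sigma$-neighbor $u$ of $w$ with $u \notin \overline{W}$. The plan is a propagation argument along the component $C$ of $v$ in $G_{\sigma}|_W$ (which has $|C| \geq 2$): if some $\sigma$-edge $y \cedge{\sigma} y'$ of $C$ had $w \cedge{-\sigma} y$ but $w \noedge y'$, then $u \cedge{\sigma} w \cedge{-\sigma} y \cedge{\sigma} y'$ would be an \emph{induced} alternating $4$-path—its non-edges $u \noedge y$, $u \noedge y'$ hold because $y,y' \in W$ and $u \notin \overline{W}$, and $w \noedge y'$ is the standing assumption—so (C2) would force the $\sigma$-edge $u \cedge{\sigma} y'$ between the endpoints, contradicting $y' \in W$, $u \notin \overline{W}$. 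Propagating along $C$ then shows $w$ is $(-\sigma)$-adjacent to all of $C$. The genuinely delicate step, which I expect to require the full strength of the hypotheses, is to turn this final configuration—$w$ joined by $(-\sigma)$-edges to the whole $\sigma$-component $C$ while carrying a $\sigma$-edge to the exterior vertex $u$—into a contradiction, by combining a short $\sigma$-subpath of $C$ with $w$ and $u$ and with the maximality of the cliques $N_{\sigma}[v]$, $N_{-\sigma}[v]$ guaranteed by $v \in S(G|_{\overline{W}})$; the inducedness of each path to which (C2) is applied is the point needing the most care.
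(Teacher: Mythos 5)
Your first containment, your transfer of (S1), and your reduction of (S2) to the single remaining case $u \cedge{\sigma} w \cedge{-\sigma} v$ with $w \in \partial_{\sigma}(V';V'')$ all match the paper's proof. The gap is that this remaining case --- the only place where (C2) and the two-vertex hypothesis are actually needed --- is left unfinished: you show that $w$ is $(-\sigma)$-adjacent to the whole $\sigma$-component $C$ of $v$ in $G_{\sigma}|_W$ and then defer the contradiction to a step you describe as genuinely delicate but only gesture at. Moreover, the detour cannot be completed in the form you set it up: once $w \cedge{-\sigma} y'$ holds for every $y' \in C$, no path $u \cedge{\sigma} w \cedge{-\sigma} y \cedge{\sigma} y'$ with $y,y' \in C$ is induced, so a strictly \lq\lq induced-paths-only'' reading of (C2) yields nothing further. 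Indeed, under that strict reading the lemma is false: take $V''=\{u,v,w,x\}$, $V'=\{v\}$, with edges $v \cedge{\sigma} x$, $w \cedge{-\sigma} v$, $w \cedge{-\sigma} x$, $w \cedge{\sigma} u$ and no others. Then $W=\{v,x\}$, $\overline{W}=\{v,x,w\}$, every component of $G_{\sigma}|_W$ has two vertices, $G$ has no induced alternating $4$-path (every candidate has a chord at $w$), and $v \in S(G|_{\overline{W}})$, yet $v \notin S(G|_{V''})$ because $u \cedge{\sigma} w \cedge{-\sigma} v$ while $u \noedge v$.

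The resolution is that (C2) must be applied to an arbitrary quadruple of distinct vertices with $u \cedge{\sigma} v \cedge{-\sigma} w \cedge{\sigma} x$, not only to induced paths; this is exactly the conclusion of Lemma \ref{lem:AlternatingPath}, which is proved for arbitrary quadruples, and it is how (C2) is used throughout the paper's proofs. With that reading the missing case closes in one line, and this is what the paper does: pick $x \in W$ with $v \cedge{\sigma} x$ (it exists by the two-vertex hypothesis, and if $x = u$ you are done); since $w \notin W$ and $W$ is a union of $\sigma$-components of $G_{\sigma}|_{V''}$, we have $w \nocedge{\sigma} x$, so applying (C2) to $u \cedge{\sigma} w \cedge{-\sigma} v \cedge{\sigma} x$ rules out the disjunct $u \cedge{\sigma} x \cedge{\sigma} w$ and forces $v \cedge{\sigma} u \cedge{\sigma} x$, i.e.\ $u \cedge{\sigma} v$. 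This handles every $\sigma$-neighbor $u$ of $w$ at once (in particular it shows directly that each such $u$ lies in $W$), so the propagation along $C$ is unnecessary.
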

\begin{proof}
Let $v \in S(G|_{\overline{W}})$.
First, to prove that $v \in W$, it suffices to consider the case that $N_{-\sigma}(v) \cap W \neq \emptyset$.
Then we have $v \cedge{-\sigma} u$ for some $u \in W$, while $u \cedge{\sigma} w$ for some $w \in W$ by the assumption.
Now condition (S2) implies that $v \cedge{\sigma} w$, therefore $v \in W$ since $v \in V''$ and $w \in W$.\\
\quad
From now, we show that $v \in S(G|_{V''})$.
Put $G' = G|_{\overline{W}}$ and $G'' = G|_{V''}$.
Now since $v \in W$, the definition of $\overline{W}$ implies that $N_{G''_+}\left[v\right] \subset W$ and $N_{G''_-}\left[v\right] \subset \overline{W}$, therefore condition (S1) holds since $v \in S(G')$.
Similarly, if $u,w \in V''$ and $u \cedge{-\sigma} w \cedge{\sigma} v$, then $w \in W$ and $u \in \overline{W}$, therefore $u \cedge{-\sigma} v$ by the condition (S2) for $v$ and $G'$.
Finally, suppose that $u,w \in V''$ and $u \cedge{\sigma} w \cedge{-\sigma} v$.
Then we have $w \in \overline{W}$ as above, while by the assumption, we have $v \cedge{\sigma} x$ for some $x \in W$.
Now we have $u \cedge{\sigma} v$ if $x = u$; thus suppose that $x \neq u$.
If $w \in W$, then we have $u \in W$ as above, therefore $u \cedge{\sigma} v$ by the condition (S2) for $v$ and $G'$.
On the other hand, if $w \in \overline{W} \setminus W$, then we have $w \nocedge{\sigma} x$, therefore (C2) implies that $x \cedge{\sigma} u \cedge{\sigma} v$ (since $u \cedge{\sigma} w \cedge{-\sigma} v \cedge{\sigma} x$ is an alternating $4$-path).
Thus $u \cedge{\sigma} v$ in any case, therefore condition (S2) holds.
Hence the proof is concluded.
\end{proof}
\begin{lem}
\label{lem:SE_simplicial_in_border}
Let $G = (V,E)$ be a connected signed-eliminable graph such that $E_+ \neq \emptyset$ and $E_- \neq \emptyset$.
Then there exists a vertex $v \in S(G)$ such that $N_+(v) \neq \emptyset$ and $N_-(v) \neq \emptyset$.
\end{lem}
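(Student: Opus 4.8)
The plan is to argue by induction on $|V|$, using the closure operators $\mathrm{cl}_{\sigma}$ and $\overline{\mathrm{cl}}_{\sigma}$ together with Lemma~\ref{lem:simplicial_interior}. First I observe that, since $G$ is signed-eliminable, condition (C2) holds for $G$ by Lemma~\ref{lem:AlternatingPath}, so Lemma~\ref{lem:simplicial_interior} is available for any choice of $V' \subset V'' \subset V$ and any $\sigma$. The whole difficulty is that an arbitrary vertex of $S(G)$ may be incident to edges of only one sign; the closure machinery is exactly what lets me drive a signed-simplicial vertex toward the boundary between the two signs, and Lemma~\ref{lem:simplicial_interior} is what transfers signed-simpliciality from a localized subgraph back to $S(G)$. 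The argument will split according to whether or not some sign-component can be localized to a proper induced subgraph.

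For the reduction case, I suppose there exist $\sigma \in \{+,-\}$ and a connected component $W$ of $G_{\sigma}$ with $|W| \geq 2$ such that $\overline{W} := \overline{\mathrm{cl}}_{\sigma}(W;V) \neq V$, and I consider $H := G|_{\overline{W}}$. Since $W \subseteq \overline{W} \subsetneq V$ and $G$ is connected, some edge joins $W$ to $V \setminus W$; as $W$ is a full $\sigma$-component, no $\sigma$-edge can leave it, so this edge is a $(-\sigma)$-edge and hence $\partial_{\sigma}(W;V) = \overline{W} \setminus W \neq \emptyset$. Consequently $H$ is connected (every vertex of $\partial_{\sigma}(W;V)$ is $(-\sigma)$-adjacent to the connected set $W$), it contains both signs (a $\sigma$-edge inside $W$ and a $(-\sigma)$-edge from $W$ to $\partial_{\sigma}(W;V)$), it is signed-eliminable by Remark~\ref{rem:subgraph_bieliminable}, and $|\overline{W}| < |V|$. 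The induction hypothesis then yields $v_0 \in S(H)$ incident to edges of both signs in $H$, hence in $G$. Applying Lemma~\ref{lem:simplicial_interior} with $V' = W$ and $V'' = V$ (whose hypotheses hold because $G_{\sigma}|_W$ is connected with at least two vertices, and because (C2) holds) gives $S(H) \subseteq S(G)$, so $v_0 \in S(G)$ is the required vertex.

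For the remaining case, I assume that $\overline{\mathrm{cl}}_{\sigma}(W;V) = V$ for every $\sigma$ and every component $W$ of $G_{\sigma}$ with $|W| \geq 2$. Here no induction is needed: for each such pair $(\sigma,W)$, Lemma~\ref{lem:simplicial_interior} with $V' = W$, $V'' = V$ and $\overline{\mathrm{cl}}_{\sigma}(W;V) = V$ yields $S(G) \subseteq W$. Since $E_{\sigma} \neq \emptyset$, at least one such $W$ exists for each $\sigma$; and if $G_{\sigma}$ had two distinct components $W, W'$ of size at least two, then $S(G) \subseteq W \cap W' = \emptyset$, contradicting $S(G) \neq \emptyset$ (Definition~\ref{defn:M}). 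Thus $G_{\sigma}$ has a unique nontrivial component $W_{\sigma}$, and $S(G) \subseteq W_+ \cap W_-$. Taking any $v \in S(G)$ (which is nonempty), the membership $v \in W_+$ forces $N_+(v) \neq \emptyset$ and $v \in W_-$ forces $N_-(v) \neq \emptyset$, as desired.

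I expect the main obstacle to be precisely this matter of forcing the signed-simplicial vertex produced to meet both signs rather than just one; the point is that a single ``find a simplicial vertex'' step is not enough, and it is the dichotomy above, funneling the search either into a strictly smaller connected bicolored subgraph or into the intersection $W_+ \cap W_-$ of the unique nontrivial components, that pins the vertex to the color boundary. I would also check the base of the induction is subsumed automatically: a connected signed graph with both signs present has $|V| \geq 3$, and whenever the reduction case applies the subgraph $H$ again has both signs, hence at least three vertices, so the recursion terminates in the direct second case.
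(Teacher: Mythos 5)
Your proof is correct, but it is organized differently from the paper's. The paper does not induct on $|V|$ for this lemma: starting from a single vertex $v$ with $u \cedge{+} v \cedge{-} w$ (Lemma \ref{lem:color_border}), it applies Lemma \ref{lem:simplicial_interior} twice in a nested fashion --- first with $W = \mathrm{cl}_+(v;V)$, giving $S(G|_{\overline{W}}) \subset W \cap S(G)$, and then inside $\overline{W}$ with $X = \mathrm{cl}_-(v;\overline{W})$ --- so that any element of the nonempty set $S(G|_{\overline{X}})$ lies in $W \cap X \cap S(G)$ and hence has neighbours of both signs. You instead run an induction governed by a dichotomy: either some nontrivial $\sigma$-component $W$ of $G_{\sigma}$ has $\overline{\mathrm{cl}}_{\sigma}(W;V) \neq V$, in which case you recurse into the smaller connected two-signed graph $G|_{\overline{\mathrm{cl}}_{\sigma}(W;V)}$ and lift the resulting vertex back via $S(G|_{\overline{W}}) \subset S(G)$; or every such closure equals $V$, in which case Lemma \ref{lem:simplicial_interior} with $V''=V$ traps $S(G)$ inside the (then necessarily unique) nontrivial component of each $G_{\sigma}$, so any signed-simplicial vertex already meets both signs. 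Both arguments rest on the same key lemma; the paper's version is shorter and pins down explicitly where the vertex lives, while yours dispenses with Lemma \ref{lem:color_border} and isolates the pleasant extra fact that when no closure is proper, each $G_{\sigma}$ has a unique component containing an edge. The individual steps you use (nonemptiness of $S$ for signed-eliminable graphs, connectedness and presence of both signs in $G|_{\overline{W}}$, well-foundedness of the recursion) all check out.
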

\begin{proof}
Note that conditions (C1)--(C3) hold by the \lq\lq only if'' part of Theorem \ref{thm:characterization} (that has been proved in Section \ref{subsec:characterization_statement}).
By Lemma \ref{lem:color_border}, we have $u \cedge{+} v \cedge{-} w$ for some vertices $u$, $v$ and $w$ of $G$.
Now put $W = \mathrm{cl}_+(v;V)$ and $\overline{W} = \overline{\mathrm{cl}}_+(v,V)$.
Then $(G|_W)_+$ is connected and contains $u$ and $v$, therefore $S(G|_{\overline{W}}) \subset W$ and $S(G|_{\overline{W}}) \subset S(G)$ by Lemma \ref{lem:simplicial_interior}.
On the other hand, now $G|_{\overline{W}}$ is connected and contains $w$.
Thus by putting $X = \mathrm{cl}_-(v;\overline{W})$ and $\overline{X} = \overline{\mathrm{cl}}_-(v;\overline{W})$, it follows that $(G|_X)_-$ is connected and contains $v$ and $w$, therefore $S(G|_{\overline{X}}) \subset X$ and $S(G|_{\overline{X}}) \subset S(G|_{\overline{W}})$ by Lemma \ref{lem:simplicial_interior}.
Moreover, a vertex $x \in S(G|_{\overline{X}})$ exists by Remark \ref{rem:subgraph_bieliminable}.
Summarizing, we have $x \in S(G)$, $x \in W \cap X$, and both $(G|_W)_+$ and $(G|_X)_-$ are connected and contain at least two vertices, therefore $N_+(x) \neq \emptyset$ and $N_-(x) \neq \emptyset$.
Thus the proof is concluded.
\end{proof}

\subsection{Proof of Theorem \ref{thm:characterization}, First Step}
\label{subsec:proof_main_theorem_1}

In Sections \ref{subsec:proof_main_theorem_1}--\ref{subsec:proof_main_theorem_3}, we give a proof of the \lq\lq if'' part of Theorem \ref{thm:characterization}, namely we show that any signed graph $G = (V,E)$ satisfying the conditions (C1)--(C3) is signed-eliminable.
Since the conditions (C1)--(C3) are closed under taking induced subgraphs, we proceed the proof by induction on $|V|$.
By Proposition \ref{prop:ThreeVertices}, the claim is trivial if $|V| \leq 3$; thus suppose that $|V| \geq 4$.
Moreover, owing to Remarks \ref{rem:Gi_chordal} and \ref{rem:component_bieliminable}, the claim follows if either $G$ is not connected, or $E_+ = \emptyset$ or $E_- = \emptyset$.
Thus we may assume further that $G$ is connected, $E_+ \neq \emptyset$ and $E_- \neq \emptyset$, therefore we have $v \cedge{+} v' \cedge{-} v''$ for some vertices $v$, $v'$ and $v''$ of $G$ by Lemma \ref{lem:color_border}.
Now by Lemma \ref{lem:SEO_extension}, it suffices to show that $S(G) \neq \emptyset$.

In this subsection, we consider the case that
\begin{equation}
\label{eq:proof_main_condition_1}
\mbox{ if } u,u',u'' \in V \mbox{ and } u \cedge{+} u' \cedge{-} u'', \mbox{ then } u \edge u'' \enspace,
\end{equation}
and prove that $S(G) \neq \emptyset$ if condition (\ref{eq:proof_main_condition_1}) is satisfied.
\begin{lem}
\label{lem:proof_main_1}
In the above setting, there exists a vertex $w \in V$ such that either $N_{G_+}(w) = \emptyset$ or $N_{G_-}(w) = \emptyset$.
\end{lem}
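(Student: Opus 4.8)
The plan is to prove the statement by contradiction, assuming that no vertex is monochromatic, i.e.\ that $N_{G_+}(w) \neq \emptyset$ and $N_{G_-}(w) \neq \emptyset$ for every $w \in V$, and to derive a contradiction from (C2) and condition (\ref{eq:proof_main_condition_1}) alone; interestingly, I do not expect to need (C1) or (C3) here. The first thing I would record is the immediate consequence of (\ref{eq:proof_main_condition_1}): for every vertex $x$, each vertex of $N_{G_+}(x)$ is joined by an edge to each vertex of $N_{G_-}(x)$, so that a two-edge path through $x$ using both signs always closes into a triangle. Throughout I would apply (C2) to any four distinct vertices carrying the pattern $u \cedge{\sigma} v \cedge{-\sigma} w \cedge{\sigma} x$, whether or not they induce a path, exactly as is done in the proof of Lemma \ref{lem:induced_path}.

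Next I would establish two symmetric auxiliary claims: \textbf{(P)} every edge of $E_-$ has a common $+$-neighbor, and \textbf{(Q)} every edge of $E_+$ has a common $-$-neighbor. For (P), take $a \cedge{-} b$ and a $+$-neighbor $c$ of $a$ (which exists since $a$ is not monochromatic); condition (\ref{eq:proof_main_condition_1}) gives $c \edge b$, and if $c \cedge{+} b$ we are done. Otherwise $c \cedge{-} b$, and choosing a $+$-neighbor $d$ of $b$ the same reasoning gives $d \edge a$; if $d \cedge{+} a$ we are again done, so the only remaining possibility is $c \cedge{-} b$ and $d \cedge{-} a$. But then $c,a,b,d$ are distinct and $c \cedge{+} a \cedge{-} b \cedge{+} d$, while neither $c \cedge{+} b$ nor $a \cedge{+} d$ holds, contradicting (C2). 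Claim (Q) follows by interchanging the two signs.

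The core is an \emph{apex} construction that cannot terminate. Starting from any edge $x_1 \cedge{+} x_2$ (which exists as $E_+ \neq \emptyset$), (Q) yields a common $-$-neighbor $b$, and (P) applied to $b \cedge{-} x_1$ yields a common $+$-neighbor $c$ of $b$ and $x_1$; applying (C2) to $c \cedge{+} b \cedge{-} x_2 \cedge{+} x_1$ (and using $b \cedge{-} x_1$ to kill one closing option) forces $c \cedge{+} x_2$ as well. Thus $c$ is $+$-adjacent to all of $A_2 := \{x_1,x_2,b\}$, and inside $A_2$ every vertex has a $-$-neighbor. This is the base case of the invariant I would carry by induction on $k$: there are a vertex $p_k$, a sign $\tau_k$ (with $\tau_{k+1}=-\tau_k$), and a set $A_k \subseteq V$ with $|A_k| = k+1$ such that $p_k \cedge{\tau_k} a$ for every $a \in A_k$, and every $a \in A_k$ has a $(-\tau_k)$-neighbor inside $A_k$. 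For the inductive step, since $p_k$ is not monochromatic it has a $(-\tau_k)$-neighbor $p_{k+1}$, which lies outside $A_k \cup \{p_k\}$ because all edges from $p_k$ into $A_k$ have sign $\tau_k$. For each $a \in A_k$, pick $a' \in A_k$ with $a \cedge{-\tau_k} a'$; then $p_{k+1} \cedge{-\tau_k} p_k \cedge{\tau_k} a \cedge{-\tau_k} a'$ is an alternating $4$-path on four distinct vertices, and since $p_k \cedge{\tau_k} a'$ rules out one closing option, (C2) forces $p_{k+1} \cedge{-\tau_k} a$. Hence $p_{k+1}$ is a $(-\tau_k)$-apex over $A_{k+1} := A_k \cup \{p_k\}$, and the invariant persists (within $A_{k+1}$ each $a\in A_k$ has the $\tau_k$-neighbor $p_k$, and $p_k$ has any $\tau_k$-neighbor in $A_k$). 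Since $|A_k| = k+1$ strictly increases while $A_k \subseteq V$, the construction is impossible once $k > |V|$, the desired contradiction.

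I expect the main obstacle to be the bookkeeping in the inductive step. Each time one must verify that the auxiliary vertex $a'$ supplying the fourth vertex of the alternating $4$-path exists—this is exactly the second half of the invariant, and is the reason it must be carried along—that the four vertices $p_{k+1},p_k,a,a'$ are genuinely distinct so that (C2) is applicable, and that the new apex $p_{k+1}$ is distinct from all earlier ones so that the sets $A_k$ strictly grow. Finding an invariant strong enough to be self-propagating, yet weak enough to hold at the starting edge, is the delicate point; once it is set up correctly, (C2) does all the forcing automatically.
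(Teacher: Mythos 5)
Your proof is correct and is essentially the paper's own argument in a different guise: your growing apex tower $(A_k,p_k,\tau_k)$ is precisely the paper's maximal sequence $(w_1,\dots,w_k)$ with $w_iw_j\in E_{\sigma_j}$ and alternating signs, and in both arguments (C2), applied to the path formed by a putative opposite-sign neighbour of the top vertex, the top vertex, a lower vertex, and one of its opposite-sign neighbours below, forces the tower to keep growing, which is impossible in a finite graph. The only (cosmetic) differences are that the paper phrases this as maximality of the sequence rather than a contradiction with no monochromatic vertex, and obtains the base case $k=3$ directly from condition (\ref{eq:proof_main_condition_1}) instead of via your auxiliary claims (P) and (Q).
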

\begin{proof}
Take a pair of a sequence $(w_1,w_2,\dots,w_k)$ of vertices of $G$ and a sequence $(\sigma_2,\sigma_3,\dots,\sigma_k)$ of signs $\sigma_i \in \{+,-\}$, with $k$ maximal, such that $\sigma_i = -\sigma_{i-1}$ for any $3 \leq i \leq k$ and $w_i w_j \in E_{\sigma_j}$ for any $1 \leq i < j \leq k$.
Note that $k \geq 3$, since the condition (\ref{eq:proof_main_condition_1}) implies that either $v \cedge{+} v''$ (now the pair of $(v'',v',v)$ and $(-,+)$ satisfies the condition) or $v \cedge{-} v''$ (now the pair of $(v,v',v'')$ and $(+,-)$ satisfies the condition).
We show that $N_{-\sigma_k}(w_k) = \emptyset$.
Assume contrary that $w_k x \in E_{-\sigma_k}$ for some $x \in V$.
Note that $x \neq w_i$ for any $1 \leq i \leq k$.
Now for each $1 \leq i \leq k-2$, we have $x w_k \in E_{-\sigma_k}$, $w_k w_{k-1} \in E_{\sigma_k}$ and $w_{k-1} w_i \in E_{-\sigma_k}$ (note that $\sigma_k = -\sigma_{k-1}$), therefore $x w_i \in E_{-\sigma_k}$ by condition (C2).
Moreover, we have $x w_k \in E_{-\sigma_k}$, $w_k w_{k-2} \in E_{\sigma_k}$ and $w_{k-2} w_{k-1} \in E_{-\sigma_k}$, therefore $x w_{k-1} \in E_{-\sigma_k}$ by condition (C2).
Thus we have $x w_i \in E_{-\sigma_k}$ for any $1 \leq i \leq k$, therefore the pair of $(w_1,\dots,w_k,x)$ and $(\sigma_2,\dots,\sigma_k,-\sigma_k)$ also satisfies the condition.
This contradicts the maximality of $k$.
Hence we have $N_{-\sigma_k}(w_k) = \emptyset$, therefore the claim holds.
\end{proof}
Owing to Lemma \ref{lem:proof_main_1}, we have $N_{G_{\sigma}}(w) = \emptyset$ for some $w \in V$ and $\sigma \in \{+,-\}$.
Since $E_{\sigma} \neq \emptyset$, we have $w' \cedge{\sigma} w''$ for some $w',w'' \in V \setminus w$.
Now put $W = \mathrm{cl}_{\sigma}(\{w',w''\};V \setminus w)$ and $\overline{W} = \overline{\mathrm{cl}}_{\sigma}(\{w',w''\};V \setminus w)$.
Then $(G|_W)_{\sigma}$ is connected and contains $w'$ and $w''$, while $S(G|_{\overline{W}}) \neq \emptyset$ by the induction hypothesis.
Thus Lemma \ref{lem:simplicial_interior} implies that $x \in S(G \setminus w)$ for some $x \in W$.\\
\quad
We show that $x \in S(G)$ by using Lemma \ref{lem:simplicial_expand_singleton}, where $x$ and $w$ play the roles of $v$ and $v'$ in that lemma, respectively.
If $x \noedge w$, then condition (D'') follows from condition (\ref{eq:proof_main_condition_1}).
On the other hand, if $x \cedge{-\sigma} w$, then condition (D'2) (where $-\sigma$ plays the role of $\sigma$) holds since $N_{\sigma}(w) = \emptyset$.
For condition (D'1), suppose that $x \cedge{-\sigma} y \neq w$.
Then, since $(G|_W)_{\sigma}$ is connected and contains at least two vertices, we have $x \cedge{\sigma} z$ for some $z \in W$.
Now since $N_{\sigma}(w) = \emptyset$ and $w \cedge{-\sigma} x \cedge{\sigma} z$, we have $w \cedge{-\sigma} z$ by condition (\ref{eq:proof_main_condition_1}).
Moreover, since $w \cedge{-\sigma} z \cedge{\sigma} x \cedge{-\sigma} y$, we have $w \cedge{-\sigma} y$ by (C2).
Thus condition (D'1) is also satisfied.
Hence we have $x \in S(G)$ by Lemma \ref{lem:simplicial_expand_singleton}, as desired.

\subsection{Proof of Theorem \ref{thm:characterization}, Second Step}
\label{subsec:proof_main_theorem_2}

From now, we consider the case that condition (\ref{eq:proof_main_condition_1}) does not hold, thus we have $v_+ \noedge v_-$ and $W = N_+(v_+) \cap N_-(v_-) \neq \emptyset$ for some vertices $v_+$ and $v_-$ of $G$.
Now for each $\sigma \in \{+,-\}$, put
\begin{eqnarray*}
X_{\sigma} &{}={}& \mathrm{cl}_{-\sigma}\bigl( W \cup v_{-\sigma} ; W \cup (V \setminus N(v_{-\sigma})) \bigr) \setminus (W \cup v_{-\sigma}) \enspace, \\
Y_{\sigma} &{}={}& \partial_{-\sigma}\bigl( W \cup v_{-\sigma} ; W \cup (V \setminus N(v_{-\sigma})) \bigr) \enspace,
\end{eqnarray*}
and put
\begin{displaymath}
V' = W \cup \{v_+,v_-\} \cup X_+ \cup Y_+ \cup X_- \cup Y_- \enspace.
\end{displaymath}
By the construction, $(G|_{X_{\sigma} \cup W \cup v_{-\sigma}})_{-\sigma}$ is connected and contains $W \cup v_{-\sigma}$ for each $\sigma \in \{+,-\}$.
Thus Remark \ref{rem:M_nondecreasing} and Lemma \ref{lem:simplicial_interior} imply that
\begin{displaymath}
S(G|_{V'}) \subset S(G|_{W \cup v_{-} \cup X_{+} \cup Y_{+}}) \cup S(G|_{W \cup v_{+} \cup X_{-} \cup Y_{-}}) \subset W \cup \{v_+,v_-\} \cup X_+ \cup X_- \enspace,
\end{displaymath}
while $v_+,v_- \not\in S(G|_{V'})$ by the choice of $v_+$ and $v_-$, therefore
\begin{equation}
\label{eq:proof_main_property_simplicial}
S(G|_{V'}) \subset W \cup X_+ \cup X_- \enspace.
\end{equation}
On the other hand, the construction implies that, for each $\sigma \in \{+,-\}$,
\begin{equation}
\label{eq:proof_main_property_edges}
\left[X_{\sigma} \cup W,Y_{\sigma}\right] \cap E_{-\sigma} = \emptyset \mbox{ and } (X_{\sigma} \cup Y_{\sigma}) \cap N(v_{-\sigma}) = \emptyset \enspace.
\end{equation}
Moreover, we have the following results:
\begin{lem}
\label{lem:proof_main_step2_1}
In the above setting, $X_{\sigma} \cup Y_{\sigma} \subset N_{\sigma}(v_{\sigma})$ for each $\sigma \in \{+,-\}$.
\end{lem}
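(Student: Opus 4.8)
The plan is to fix $\sigma$ and, using the symmetry between $+$ and $-$, treat only the case $\sigma = +$. Write $V'' = W \cup (V \setminus N(v_-))$ and let $C = \mathrm{cl}_-(W \cup v_-; V'')$; since $v_- \cedge{-} w$ for every $w \in W$, the set $C$ is a single $-$-component containing $\{v_-\} \cup W$, and $X_+ = C \setminus (W \cup v_-)$ while $Y_+$ consists of the vertices of $V''$ outside $C$ having a $+$-neighbour in $C$. I would break the argument into three steps: (i) show $v_+ \notin C$; (ii) show $X_+ \subseteq N_+(v_+)$; (iii) show $Y_+ \subseteq N_+(v_+)$. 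Throughout, the two operative tools are condition (C2) applied to $4$-paths and the prohibition of mountains from (C3), fed by induced paths produced via Lemma \ref{lem:induced_path}.

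For step (i), suppose for contradiction $v_+ \in C$ and take a shortest $-$-path $v_+ = t_0 \cedge{-} t_1 \cedge{-} \cdots \cedge{-} t_p = w^*$ from $v_+$ to $W$ inside $G_-|_{V''}$; minimality gives $t_1 \notin W$, and $v_+ \noedge v_-$ forces $t_1 \neq v_-$. Since $w^* \in W$ we have $w^* \cedge{+} v_+$ and $v_- \cedge{-} w^*$, so $v_- \cedge{-} w^* \cedge{+} v_+ \cedge{-} t_1$ is a $4$-path to which (C2) applies; as $v_- \noedge v_+$, it yields $v_- \cedge{-} t_1$, which contradicts $t_1 \in V \setminus N(v_-)$. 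For step (ii) I would induct on the $-$-distance $d \geq 1$ of $x \in X_+$ from $W$ in $G_-|_C$. Taking a shortest $-$-path $x = y_d \cedge{-} \cdots \cedge{-} y_0 = w \in W$ and prepending $v_-$ (which is $-$-adjacent to $w$ but not to the interior vertices, all lying in $V \setminus N(v_-)$), Lemma \ref{lem:induced_path} produces an induced $-$-path $Q\colon v_- \cedge{-} y_0 \cedge{-} \cdots \cedge{-} y_d = x$ in $G$ with $d + 2 \geq 3$ vertices. By the inductive hypothesis every interior vertex $y_0,\dots,y_{d-1}$ of $Q$ is $+$-adjacent to $v_+$, while $v_+ \noedge v_-$. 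Now if $x \noedge v_+$ the tuple $(v_-,y_0,\dots,y_d;v_+)$ is exactly a $+$-mountain, contradicting (C3); and $x \cedge{-} v_+$ is impossible because $x \in C$ together with step (i) would force $v_+ \in C$. Hence $x \cedge{+} v_+$.

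For step (iii), let $y \in Y_+$, pick $c \in N_+(y) \cap C$ (so $c \neq v_-$, whence $c \cedge{+} v_+$ by step (ii)), and pick a $-$-neighbour $c'$ of $c$ in $C$. I would first rule out $y \cedge{-} v_+$: otherwise $y \cedge{-} v_+ \cedge{+} c \cedge{-} c'$ is a $4$-path and (C2) gives $y \cedge{-} c'$, which is absurd since $c' \in C$ but $y \notin C$. To obtain $y \cedge{+} v_+$, if $c' \neq v_-$ then $c' \cedge{+} v_+$ (step (ii)) and (C2) applied to $y \cedge{+} c \cedge{-} c' \cedge{+} v_+$ yields $y \cedge{+} v_+$; if on the other hand $c' = v_-$ (so $c \in W$) and $y \noedge v_+$, then the induced subgraph on $\{v_+, c, y, v_-\}$, consisting of the $+$-path $v_+ \cedge{+} c \cedge{+} y$ together with $c \cedge{-} v_-$, is a $(-)$-mountain $(v_+, c, y; v_-)$, again contradicting (C3).

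The main obstacle I expect is the inductive step of (ii): one must arrange the $-$-path so that its interior vertices are all strictly closer to $W$ (hence covered by the hypothesis) and, crucially, produce a genuinely \emph{induced} path in $G$ so that $(v_-,y_0,\dots,y_d;v_+)$ really is a mountain rather than merely mountain-like; this is where Lemma \ref{lem:induced_path} does the heavy lifting, and where step (i) is needed to exclude the stray $-$-edge $x \cedge{-} v_+$. The secondary delicate point is the degenerate case $c' = v_-$ in step (iii), where no suitable $+$-adjacent $-$-neighbour is available and one must fall back on the explicit mountain; the dichotomy "(C2) vs.\ a forbidden mountain'' is what makes both steps go through.
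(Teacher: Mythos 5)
Your proof is correct and follows essentially the same route as the paper's: for $X_{\sigma}$, the same induction along a shortest $(-\sigma)$-path out of $W$, prepended with $v_{-\sigma}$, made induced via Lemma \ref{lem:induced_path} and closed off by the mountain exclusion in (C3); for $Y_{\sigma}$, the same dichotomy between an application of (C2) through a $(-\sigma)$-neighbour inside the closure and a three-vertex mountain when that neighbour is $v_{-\sigma}$. The only organizational difference is that you isolate ``$v_+ \notin C$'' as a separate step where the paper instead proves $u \nocedge{-\sigma} v_{\sigma}$ for every $u \in X_{\sigma} \cup Y_{\sigma}$ up front; both rest on the same (C2) application to the path $u \cedge{-\sigma} v_{\sigma} \cedge{\sigma} w \cedge{-\sigma} v_{-\sigma}$.
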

\begin{proof}
First, we show that $u \nocedge{-\sigma} v_{\sigma}$ for any $u \in X_{\sigma} \cup Y_{\sigma}$.
If $u \cedge{-\sigma} v_{\sigma}$, then we have $u \cedge{-\sigma} v_{\sigma} \cedge{\sigma} w \cedge{-\sigma} v_{-\sigma}$ for any $w \in W$, therefore $u \cedge{-\sigma} v_{-\sigma}$ by (C2).
This contradicts (\ref{eq:proof_main_property_edges}).
Thus we have $u \nocedge{-\sigma} v_{\sigma}$.\\
\quad
Now it suffices to show that $u \edge v_{\sigma}$ for any $u \in X_{\sigma} \cup Y_{\sigma}$.
First, for the case $u \in X_{\sigma}$, we take an induced path $u_0u_1 \cdots u_k$ in $G_{-\sigma}$ such that $u_0 \in W$, $u_i \in X_{\sigma}$ for $1 \leq i \leq k-1$ and $u_k = u$ (such a path exists by the construction of $X_{\sigma}$ and (\ref{eq:proof_main_property_edges})), and prove that $v_{\sigma} \edge u$ by induction on $k$.
We have $v_{\sigma} \cedge{\sigma} u_i$ for $0 \leq i \leq k-1$ by induction hypothesis and the previous paragraph.
Now by (\ref{eq:proof_main_property_edges}) and Lemma \ref{lem:induced_path}, the path $v_{-\sigma} u_0u_1 \cdots u_k$ in $G_{-\sigma}$ is an induced path in $G$, therefore $(v_{-\sigma},u_0,\dots,u_k;v_{\sigma})$ is a $\sigma$-mountain if $v_{\sigma} \noedge u$.
Thus (C3) implies that $v_{\sigma} \edge u$.
Secondly, for the case $u \in Y_{\sigma}$, by the construction and (\ref{eq:proof_main_property_edges}), we have $u \cedge{\sigma} w$ for some $w \in W \cup X_{\sigma}$.
Now if $w \in X_{\sigma}$, then $w \cedge{-\sigma} x$ for some $x \in W \cup X_{\sigma}$, therefore $v_{\sigma} \cedge{\sigma} x \cedge{-\sigma} w \cedge{\sigma} u$ by the previous paragraph.
Thus (C2) implies that $v_{\sigma} \cedge{\sigma} u$.
On the other hand, suppose that $w \in W$.
Then (\ref{eq:proof_main_property_edges}) implies that $(v_{\sigma},w,u;v_{-\sigma})$ is a $(-\sigma)$-mountain if $v_{\sigma} \noedge u$, contradicting (C3).
Thus we have $v_{\sigma} \edge u$.
Hence the proof is concluded.
\end{proof}
\begin{lem}
\label{lem:proof_main_step2_2}
In the above setting, if $\sigma \in \{+,-\}$ and $u \in N_{-\sigma}(v_{\sigma})$, then $N_{-\sigma}(v_{\sigma}) \cup W \cup v_{-\sigma} \cup X_{\sigma} \cup X_{-\sigma} \cup Y_{-\sigma} \subset N_{-\sigma}\left[u\right]$.
\end{lem}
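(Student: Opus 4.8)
The statement to be proved is Lemma~\ref{lem:proof_main_step2_2}: in the Second-Step setting, for each sign $\sigma$ and each $u \in N_{-\sigma}(v_{\sigma})$, the vertex $u$ is joined (in $G_{-\sigma}$) to every vertex of the large set $N_{-\sigma}(v_{\sigma}) \cup W \cup v_{-\sigma} \cup X_{\sigma} \cup X_{-\sigma} \cup Y_{-\sigma}$. Note first that $u \cedge{-\sigma} v_{\sigma}$ holds by hypothesis, so $u \notin X_{\sigma} \cup Y_{\sigma}$ by Lemma~\ref{lem:proof_main_step2_1} (those vertices are non-adjacent to $v_{\sigma}$ in $G_{-\sigma}$), and also $u \neq v_{-\sigma}$ since $v_+ \noedge v_-$. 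The plan is to handle the target set piece by piece, always aiming to produce an alternating $4$-path centered appropriately so that condition (C2) forces the desired $(-\sigma)$-edge, using mountains/hills via (C3) only where (C2) does not directly apply.

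\textbf{The plan, target-set by target-set.} First I would treat $W$ and $v_{-\sigma}$ together. For $w \in W$ we have $u \cedge{-\sigma} v_{\sigma} \cedge{\sigma} w$ and, since $w \in W = N_+(v_+) \cap N_-(v_-)$, also $w \cedge{-\sigma} v_{-\sigma}$; thus $u \cedge{-\sigma} v_{\sigma} \cedge{\sigma} w \cedge{-\sigma} v_{-\sigma}$, and Lemma~\ref{lem:AlternatingPath} applied to this alternating path (in the $-\sigma$ reading) yields $u \cedge{-\sigma} v_{-\sigma}$, settling $v_{-\sigma}$. Once $u \cedge{-\sigma} v_{-\sigma}$ is known, each $w \in W$ fits into $u \cedge{-\sigma} v_{-\sigma} \cedge{-\sigma} w$-type relations, and I would extract $u \cedge{-\sigma} w$ either by (C1) chordality of $G_{-\sigma}$ (a triangle/clique argument on $v_{-\sigma}$) or by a fresh (C2) application; the point is that $v_{-\sigma}$ is now a common $(-\sigma)$-neighbor giving leverage. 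For the other elements of $N_{-\sigma}(v_{\sigma})$, say $u' \cedge{-\sigma} v_{\sigma}$, I would show $u \cedge{-\sigma} u'$ using that $N_{-\sigma}\left[v_{\sigma}\right]$ is controlled by (C1): since $G_{-\sigma}$ is chordal and both $u,u'$ lie in $N_{-\sigma}(v_{\sigma})$, I expect the induced structure on $N_{-\sigma}(v_{\sigma}) \cup W$ to be forced into a clique in $G_{-\sigma}$ by combining (C2) with chordality.

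\textbf{The sets $X_{\sigma}, X_{-\sigma}, Y_{-\sigma}$.} These are the substantive cases. For $x \in X_{-\sigma}$, I would use Lemma~\ref{lem:proof_main_step2_1} (with the roles of $\sigma$ swapped) to know $x \cedge{-\sigma} v_{-\sigma}$; combined with the just-proved $u \cedge{-\sigma} v_{-\sigma}$ and chordality of $G_{-\sigma}$, I aim to force $u \cedge{-\sigma} x$. If the direct chordal-triangle argument fails, I would build an alternating $4$-path through $v_{\sigma}$ or through $W$ and invoke (C2); the relation~(\ref{eq:proof_main_property_edges}) guarantees the paths I construct are genuinely induced in the relevant $G_{\pm\sigma}$. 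For $x \in X_{\sigma}$, Lemma~\ref{lem:proof_main_step2_1} instead gives $x \cedge{\sigma} v_{\sigma}$ (same-sign), so here I would write $u \cedge{-\sigma} v_{\sigma} \cedge{\sigma} x$ and chase a suitable $(-\sigma)$-path into $X_{\sigma}$ (using the connectivity of $(G|_{X_{\sigma} \cup W \cup v_{-\sigma}})_{-\sigma}$) to reach a configuration where (C2) or a mountain/hill obstruction from (C3) applies. The set $Y_{-\sigma}$ I would reduce to $X_{-\sigma} \cup W$: by the definition of $\partial_{\sigma}$, any $y \in Y_{-\sigma}$ has a $(-\sigma)$-neighbor in $\mathrm{cl}$, hence a $\sigma$-neighbor relation back into the already-handled sets, and I would propagate adjacency to $u$ through that neighbor via one more (C2) step.

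\textbf{Expected main obstacle.} The hardest part will be the $X_{\sigma}$ case (same-sign attachment of $x$ to $v_{\sigma}$), because there the natural paths are in $G_{-\sigma}$ while the hypothesis $u \cedge{-\sigma} v_{\sigma}$ sits transversally, so neither (C2) nor chordality applies in one stroke; I anticipate needing an induction along the induced $(-\sigma)$-path realizing $x \in X_{\sigma}$ (exactly as in the proof of Lemma~\ref{lem:proof_main_step2_1}), at each stage invoking (C2) and, at the base, the mountain-exclusion of (C3) to rule out the failing configuration. Throughout, I would lean on Lemma~\ref{lem:induced_path} to certify that the paths I assemble are induced in $G$, which is what licenses the mountain/hill detection under (C3).
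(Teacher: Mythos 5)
Your overall plan (handle the target set piece by piece, leaning on (C2), Lemma~\ref{lem:induced_path} and the mountain/hill exclusions) points in the right direction, but several steps as written would not go through. The most serious is the very first one: you invoke Lemma~\ref{lem:AlternatingPath} to obtain $u \cedge{-\sigma} v_{-\sigma}$, but that lemma assumes $G$ is signed-eliminable, which is exactly what this part of the proof is trying to establish; using it here is circular. The repair is to apply (C2) directly to $u \cedge{-\sigma} v_{\sigma} \cedge{\sigma} w \cedge{-\sigma} v_{-\sigma}$: since $v_{\sigma} \noedge v_{-\sigma}$ kills one disjunct, (C2) yields $v_{-\sigma} \cedge{-\sigma} u \cedge{-\sigma} w$ in a single stroke. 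Note that this already gives $u \cedge{-\sigma} w$, so your subsequent plan of extracting $u \cedge{-\sigma} w$ \lq\lq by (C1) chordality (a triangle/clique argument on $v_{-\sigma}$)'' is both unnecessary and unsound: two vertices sharing a $(-\sigma)$-neighbour in a chordal graph need not be adjacent. The same unsound pattern recurs in your treatment of $N_{-\sigma}(v_{\sigma})$ and of $X_{-\sigma}$, where you only \lq\lq expect'' chordality plus (C2) to force a clique; what actually works for $N_{-\sigma}(v_{\sigma})$ is a further explicit (C2) application to $u' \cedge{-\sigma} v_{\sigma} \cedge{\sigma} w \cedge{-\sigma} u$ with $w \in W$.

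Second, you have the division of labour between $X_{\sigma}$ and $X_{-\sigma}$ backwards. The case $x \in X_{\sigma}$ is the easy one: $x$ has a $(-\sigma)$-neighbour $x' \in W \cup X_{\sigma}$, and the configuration $x \cedge{-\sigma} x' \cedge{\sigma} v_{\sigma} \cedge{-\sigma} u$ settles it with one application of (C2) --- no induction along the path and no mountain is needed. The genuinely hard case is $X_{-\sigma}$, and there your proposed fallback of \lq\lq an alternating $4$-path through $v_{\sigma}$ or through $W$'' cannot work: a vertex of $X_{-\sigma}$ at $G_{\sigma}$-distance at least two from $W$ is not adjacent to $v_{\sigma}$ and need not be joined by a $(-\sigma)$-edge to anything already handled, so no length-four configuration is available. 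The paper first shows $x \nocedge{\sigma} u$ (via (C2) and $v_{\sigma} \noedge v_{-\sigma}$), then inducts along an induced $G_{\sigma}$-path $x_0 \cdots x_k$ from $W$ into $X_{-\sigma}$, certified induced in $G$ together with $v_{\sigma}$ by Lemma~\ref{lem:induced_path}, and at a failing stage exhibits the $(-\sigma)$-hill $(v_{\sigma},x_0,\dots,x_k;u,v_{-\sigma})$; so it is the hill exclusion of (C3), with apexes $u$ and $v_{-\sigma}$, not a mountain, that closes this case. Your reduction of $Y_{-\sigma}$ to the already-handled sets by one more (C2) step is essentially the paper's argument.
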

\begin{proof}
First, suppose that $w \in W$.
Then $u \cedge{-\sigma} v_{\sigma} \cedge{\sigma} w \cedge{-\sigma} v_{-\sigma}$, therefore $v_{-\sigma} \cedge{-\sigma} u \cedge{-\sigma} w$ by (C2) since $v_{\sigma} \noedge v_{-\sigma}$.
Moreover, if $u \neq u' \in N_{-\sigma}(v_{\sigma})$, then we have $u' \cedge{-\sigma} v_{\sigma} \cedge{\sigma} w \cedge{-\sigma} u$, therefore $u' \cedge{-\sigma} u$ by (C2).
These imply that $v_{-\sigma} \cup W \cup N_{-\sigma}(v_{\sigma}) \subset N_{-\sigma}\left[u\right]$.
On the other hand, if $x \in X_{\sigma}$, then we have $x' \cedge{-\sigma} x$ for some $x' \in W \cup X_{\sigma}$.
Now by Lemma \ref{lem:proof_main_step2_1}, we have $x \cedge{-\sigma} x' \cedge{\sigma} v_{\sigma} \cedge{-\sigma} u$, therefore $x \cedge{-\sigma} u$ by (C2).
Thus we have $X_{\sigma} \subset N_{-\sigma}\left[u\right]$.\\
\quad
From now, we show that $x \cedge{-\sigma} u$ for any $x \in X_{-\sigma} \cup Y_{-\sigma}$.
First, if $x \cedge{\sigma} u$, then $v_{\sigma} \cedge{-\sigma} u \cedge{\sigma} x \cedge{-\sigma} v_{-\sigma}$ by Lemma \ref{lem:proof_main_step2_1}, while $v_{\sigma} \noedge v_{-\sigma}$.
This contradicts (C2), therefore we have $x \nocedge{\sigma} u$.
Secondly, in the case $x \in X_{-\sigma}$, we take an induced path $x_0x_1 \cdots x_k$ in $G_{\sigma}$ such that $x_0 \in W$, $x_i \in X_{-\sigma}$ for $1 \leq i \leq k-1$ and $x_k = x$ (it exists by construction of $X_{-\sigma}$ and (\ref{eq:proof_main_property_edges})), and we prove $x \cedge{-\sigma} u$ by induction on $k$.
We have $x_i \cedge{-\sigma} u$ by induction hypothesis, while the path $v_{\sigma} x_0x_1 \cdots x_k$ in $G_{\sigma}$ is an induced path in $G$ by (\ref{eq:proof_main_property_edges}) and Lemma \ref{lem:induced_path}.
Now if $x \nocedge{-\sigma} u$, then $x \noedge u$ by the above result, therefore $(v_{\sigma},x_0,\dots,x_k;u,v_{-\sigma})$ is a $(-\sigma)$-hill.
This contradicts (C3), therefore we have $x \cedge{-\sigma} u$.
Finally, in the case $x \in Y_{-\sigma}$, we have $x \cedge{-\sigma} x'$ for some $x' \in X_{-\sigma} \cup W$, and $x' \cedge{\sigma} x''$ for some $x'' \in v_{\sigma} X_{-\sigma} \cup W \cup v_{\sigma}$.
Now we have $x \cedge{-\sigma} x' \cedge{\sigma} x'' \cedge{-\sigma} u$ by the above result, therefore $x \cedge{-\sigma} u$ by (C2).
Hence we have $X_{-\sigma} \cup Y_{-\sigma} \subset N_{-\sigma}\left[u\right]$, therefore the proof is concluded.
\end{proof}
\begin{lem}
\label{lem:proof_main_step2_3}
In the above setting, if $w \in W$, then $N_{\sigma}(w) \subset W \cup v_{\sigma} \cup X_{\sigma} \cup Y_{\sigma} \cup X_{-\sigma} \cup N_{\sigma}(v_{-\sigma})$ for each $\sigma \in \{+,-\}$.
\end{lem}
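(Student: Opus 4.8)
The plan is to prove the statement for $\sigma = +$ only; the case $\sigma = -$ then follows by the symmetry interchanging $+ \leftrightarrow -$, $v_+ \leftrightarrow v_-$, $X_+ \leftrightarrow X_-$ and $Y_+ \leftrightarrow Y_-$, under which $W = N_+(v_+) \cap N_-(v_-)$ and the entire setting of Section \ref{subsec:proof_main_theorem_2} are invariant. So I would fix $w \in W$, take $y \in N_+(w)$, and argue by contradiction: assume $y$ lies outside the target set $T = W \cup v_+ \cup X_+ \cup Y_+ \cup X_- \cup N_+(v_-)$. Thus $y \notin W$, $y \neq v_+$, $y \notin X_+ \cup Y_+ \cup X_-$ and $y \notin N_+(v_-)$; moreover $y \neq v_-$, since $w \cedge{-} v_-$ forces $v_- \notin N_+(w)$. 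I would then split according to the relation between $y$ and $v_+$: either $y \noedge v_+$, or $y \cedge{+} v_+$, or $y \cedge{-} v_+$, and rule out each.

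If $y \noedge v_+$, then $y \in W \cup (V \setminus N(v_+))$, and since $w \in W \subset \mathrm{cl}_+(W \cup v_+;\, W \cup (V\setminus N(v_+)))$ with $w \cedge{+} y$, the vertex $y$ belongs to the same $+$-component; hence $y \in \mathrm{cl}_+(W \cup v_+;\, W\cup(V\setminus N(v_+))) = (W \cup v_+) \cup X_-$. As $y \neq v_+$ and $y \notin W$, this gives $y \in X_-$, contradicting $y \notin T$. If $y \cedge{+} v_+$, then $y \in N_+(v_+)$; since $y \notin W = N_+(v_+) \cap N_-(v_-)$ we get $y \notin N_-(v_-)$, while $y \notin N_+(v_-)$ by assumption, so $y \noedge v_-$, i.e.\ $y \in V \setminus N(v_-)$. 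Now $y \notin \mathrm{cl}_-(W \cup v_-;\, W \cup (V\setminus N(v_-))) = (W \cup v_-) \cup X_+$ (because $y \notin X_+ \cup W$ and $y \neq v_-$), yet the $+$-edge $y \cedge{+} w$ joins $y$ to $w \in W$ inside this closure; by the definition of $\partial_-$ this places $y \in Y_+ = \partial_-(W \cup v_-;\, W \cup (V\setminus N(v_-)))$, again a contradiction. Finally, if $y \cedge{-} v_+$, then $y \in N_-(v_+)$, so Lemma \ref{lem:proof_main_step2_2} (with $\sigma = +$ and $u = y$) yields $W \subset N_-\left[y\right]$; in particular $w \cedge{-} y$, contradicting $w \cedge{+} y$. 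Hence no such $y$ exists and $N_+(w) \subset T$, as required.

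The step I expect to be the main obstacle is the case $y \cedge{+} v_+$: the membership $y \in N_+(v_+)$ by itself does not locate $y$ among the pieces of $V'$, and one must first exploit the defining intersection $W = N_+(v_+) \cap N_-(v_-)$ together with $y \notin N_+(v_-)$ to deduce non-adjacency $y \noedge v_-$, and only then recognize — through the $+$-edge back to $w$ — that $y$ is a \emph{boundary} vertex of the relevant $(-)$-closure, i.e.\ $y \in Y_+$. The remaining two cases are comparatively direct, resting respectively on $+$-connectivity (the very definition of $X_-$) and on the previously established Lemma \ref{lem:proof_main_step2_2}.
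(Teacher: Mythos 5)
Your proposal is correct and follows essentially the same route as the paper's proof: exclude $y \cedge{-} v_+$ via Lemma \ref{lem:proof_main_step2_2} (a $(-)$-edge to $w\in W$ would clash with $w \cedge{+} y$), then case on the adjacency of $y$ to $v_+$ and $v_-$ to place $y$ in $X_-$, $Y_+$, $W$, or $N_+(v_-)$. You merely unwind the closure/boundary definitions more explicitly than the paper's one-line assertions ``$u \in X_{-\sigma}$ if $u \noedge v_{\sigma}$'' and ``$u \in X_{\sigma} \cup Y_{\sigma}$ if $u \noedge v_{-\sigma}$'', and your checks are all sound.
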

\begin{proof}
Let $u \in N_{\sigma}(w)$.
It suffices to consider the case that $u \neq v_{\sigma}$ and $u \nocedge{\sigma} v_{-\sigma}$.
Note that $u \nocedge{-\sigma} v_{\sigma}$ by Lemma \ref{lem:proof_main_step2_2}.
Now we have $u \in X_{-\sigma}$ if $u \noedge v_{\sigma}$, while $u \in X_{\sigma} \cup Y_{\sigma}$ if $u \noedge v_{-\sigma}$.
Moreover, we have $u \in W$ if $u \cedge{\sigma} v_{\sigma}$ and $u \cedge{-\sigma} v_{-\sigma}$.
Hence the claim holds in any case.
\end{proof}
\begin{lem}
\label{lem:proof_main_step2_4}
In the above setting, if $\sigma \in \{+,-\}$ and $x \in X_{\sigma}$, then $N_{\sigma}(x) \subset W \cup v_{\sigma} \cup X_{\sigma} \cup Y_{\sigma} \cup N_{\sigma}(v_{-\sigma})$ and $N_{-\sigma}(x) \subset W \cup X_{\sigma} \cup X_{-\sigma} \cup Y_{-\sigma} \cup N_{-\sigma}(v_{\sigma})$.
\end{lem}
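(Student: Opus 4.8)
The plan is to prove each of the two inclusions by taking a vertex $u$ in the relevant neighbourhood of $x$ and locating it through a case analysis on the adjacency of $u$ to $v_{\sigma}$ and $v_{-\sigma}$. The tools I expect to use repeatedly are Lemma~\ref{lem:proof_main_step2_1} (so that $x \cedge{\sigma} v_{\sigma}$, and more generally $X_{\sigma} \cup Y_{\sigma} \subset N_{\sigma}(v_{\sigma})$), Lemma~\ref{lem:proof_main_step2_2}, the relations~(\ref{eq:proof_main_property_edges}) (in particular $x \noedge v_{-\sigma}$), condition (C2) in the form $a \cedge{\tau} b \cedge{-\tau} c \cedge{\tau} d \Rightarrow a \cedge{\tau} d$, and the following consequence of the definition of $X_{\sigma}$: since $x \in X_{\sigma}$ lies in a $(-\sigma)$-component of $G_{-\sigma}|_{W \cup (V \setminus N(v_{-\sigma}))}$ that meets $W$ and has at least two vertices, $x$ has a $(-\sigma)$-neighbour $x' \in W \cup X_{\sigma}$, whence $x' \cedge{\sigma} v_{\sigma}$.

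For the first inclusion, let $u \cedge{\sigma} x$. If $u = v_{\sigma}$ or $u \cedge{\sigma} v_{-\sigma}$ we are done, so assume $u \neq v_{\sigma}$ and $u \nocedge{\sigma} v_{-\sigma}$. I would first rule out $u \cedge{-\sigma} v_{\sigma}$: Lemma~\ref{lem:proof_main_step2_2} would give $x \in X_{\sigma} \subset N_{-\sigma}\left[u\right]$, contradicting $u \cedge{\sigma} x$. Next I rule out $u \noedge v_{\sigma}$: with $x'$ as above, the chain $u \cedge{\sigma} x \cedge{-\sigma} x' \cedge{\sigma} v_{\sigma}$ forces $u \cedge{\sigma} v_{\sigma}$ by (C2), a contradiction. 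Hence $u \cedge{\sigma} v_{\sigma}$. If moreover $u \cedge{-\sigma} v_{-\sigma}$, then $u \in N_{\sigma}(v_{\sigma}) \cap N_{-\sigma}(v_{-\sigma}) = W$; otherwise $u \noedge v_{-\sigma}$, so $u \in W \cup (V \setminus N(v_{-\sigma}))$ and the $\sigma$-edge $u \cedge{\sigma} x$ into the $(-\sigma)$-closure places $u$ either in that closure (giving $u \in W \cup X_{\sigma}$, since $u \neq v_{-\sigma}$) or on its boundary $Y_{\sigma}$. This exhausts the cases.

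For the second inclusion, let $u \cedge{-\sigma} x$. If $u \cedge{-\sigma} v_{\sigma}$ we are done; if $u \cedge{\sigma} v_{\sigma}$, then $u \cedge{\sigma} v_{-\sigma}$ is impossible (else $v_{-\sigma} \cedge{\sigma} u \cedge{-\sigma} x \cedge{\sigma} v_{\sigma}$ would give $v_{\sigma} \cedge{\sigma} v_{-\sigma}$ by (C2), contradicting $v_{\sigma} \noedge v_{-\sigma}$), and the $(-\sigma)$-edge $u \cedge{-\sigma} x$ into the $(-\sigma)$-closure yields $u \in W \cup X_{\sigma}$ as before. The remaining and hardest case is $u \noedge v_{\sigma}$, where the target forces $u \in X_{-\sigma} \cup Y_{-\sigma}$. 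Here I would first establish $u \cedge{-\sigma} v_{-\sigma}$: the subcase $u \noedge v_{-\sigma}$ would put $u$ in the $(-\sigma)$-closure (via $u \cedge{-\sigma} x$) yet outside $W \cup v_{-\sigma} \cup X_{\sigma}$, a contradiction, while $u \cedge{\sigma} v_{-\sigma}$ is excluded exactly as above.

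The main obstacle is then to place such a $u$ (with $u \cedge{-\sigma} x$, $u \cedge{-\sigma} v_{-\sigma}$, $u \noedge v_{\sigma}$) inside $X_{-\sigma} \cup Y_{-\sigma}$, which are defined through the $\sigma$-closure of $W \cup v_{\sigma}$ and so carry the ``wrong'' sign for the edges at hand. My plan is a chordality argument in $G_{-\sigma}$: choose a shortest $(-\sigma)$-path $x_0 x_1 \cdots x_k = x$ with $x_0 \in W$ and $x_1,\dots,x_k \in X_{\sigma}$; since $x_i \noedge v_{-\sigma}$ for $i \geq 1$ while $v_{-\sigma} \cedge{-\sigma} x_0$, $u \cedge{-\sigma} x_k$ and $u \cedge{-\sigma} v_{-\sigma}$, the distinct vertices $v_{-\sigma},x_0,\dots,x_k,u$ form a cycle in $G_{-\sigma}$ whose only possible chords are $(-\sigma)$-edges $u \cedge{-\sigma} x_i$. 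Applying (C1) and passing to the chord with smallest index, a minimality argument collapses the cycle to a triangle and forces $u \cedge{-\sigma} x_0$ with $x_0 \in W$. Since $x_0$ lies in the $\sigma$-closure defining $X_{-\sigma}$ and $u \in W \cup (V \setminus N(v_{\sigma}))$, this $(-\sigma)$-edge places $u$ in that closure or on its boundary, i.e.\ $u \in X_{-\sigma} \cup Y_{-\sigma}$, completing the proof. I expect the distinctness bookkeeping for the cycle and the use of a shortest (hence induced) path to be the only delicate points beyond this.
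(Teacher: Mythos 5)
Your proposal follows essentially the same route as the paper: the same case analysis on the adjacency of $u$ to $v_{\sigma}$ and $v_{-\sigma}$, the same reliance on Lemmas \ref{lem:proof_main_step2_1} and \ref{lem:proof_main_step2_2}, on (\ref{eq:proof_main_property_edges}) and on (C2), and, for the hard subcase, the same chordality argument in $G_{-\sigma}$ along a path from $W$ through $X_{\sigma}$ closed up by $u$ and $v_{-\sigma}$. (The paper packages the extension by $v_{-\sigma}$ as Lemma \ref{lem:induced_path} and concludes $u \cedge{-\sigma} x_i$ for all $i$; your shortest-path plus minimal-chord-index variant reaches the same key conclusion $u \cedge{-\sigma} x_0 \in W$, and works because $v_{-\sigma}$ has no edge at all to $X_{\sigma} \cup Y_{\sigma}$ by (\ref{eq:proof_main_property_edges}).) There is one local slip in the second inclusion: the subcase $u \noedge v_{\sigma}$ and $u \noedge v_{-\sigma}$ is \emph{not} a contradiction. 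You give no reason why $u$ should lie outside $X_{\sigma}$, and in fact it need not: since $u \in V \setminus N(v_{-\sigma})$ and $u \cedge{-\sigma} x$ with $x \in X_{\sigma}$, the vertex $u$ lies in $\mathrm{cl}_{-\sigma}\bigl(W \cup v_{-\sigma}; W \cup (V \setminus N(v_{-\sigma}))\bigr)$ and hence in $X_{\sigma}$ — which is perfectly fine, because $X_{\sigma}$ is one of the members of the target set $W \cup X_{\sigma} \cup X_{-\sigma} \cup Y_{-\sigma} \cup N_{-\sigma}(v_{\sigma})$; this is exactly how the paper disposes of that subcase. So your remark that the target ``forces $u \in X_{-\sigma} \cup Y_{-\sigma}$'' when $u \noedge v_{\sigma}$ overlooks the $X_{\sigma}$ alternative. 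The repair is immediate and does not affect the correctly handled remaining case $u \noedge v_{\sigma}$, $u \cedge{-\sigma} v_{-\sigma}$, so the argument as a whole stands.
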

\begin{proof}
First, we show that $u \in W \cup v_{\sigma} \cup X_{\sigma} \cup Y_{\sigma} \cup N_{\sigma}(v_{-\sigma})$ for any $u \in N_{\sigma}(x)$.
It suffices to consider the case that $u \neq v_{\sigma}$ and $u \nocedge{\sigma} v_{-\sigma}$.
Now by the choice of $x$, we have $x \cedge{-\sigma} x'$ for some $x' \in X_{\sigma} \cup W$, therefore $u \cedge{\sigma} x \cedge{-\sigma} x' \cedge{\sigma} v_{\sigma}$ by Lemma \ref{lem:proof_main_step2_1}.
Thus $u \cedge{\sigma} v_{\sigma}$ by (C2).
This implies that $u \in W$ if $u \cedge{-\sigma} v_{-\sigma}$, while $u \in X_{\sigma} \cup Y_{\sigma}$ if $u \noedge v_{-\sigma}$ (since $u \cedge{\sigma} x \in X_{\sigma}$).
Thus the claim for $N_{\sigma}(x)$ holds (since $u \neq v_{-\sigma}$).\\
\quad
Secondly, we show that $u \in W \cup X_{\sigma} \cup X_{-\sigma} \cup Y_{-\sigma} \cup N_{-\sigma}(v_{\sigma})$ for any $u \in N_{-\sigma}(x)$.
Note that $u \neq v_{\sigma}$ and $u \neq v_{-\sigma}$ by (\ref{eq:proof_main_property_edges}) and Lemma \ref{lem:proof_main_step2_1}, and $u \nocedge{\sigma} v_{-\sigma}$ by Lemma \ref{lem:proof_main_step2_2} (since $x \nocedge{\sigma} u$).
It suffices to consider the case that $u \nocedge{-\sigma} v_{\sigma}$.
Now we have $u \in X_{\sigma}$ if $u \noedge v_{-\sigma}$, while $u \in W$ if $u \cedge{\sigma} v_{\sigma}$ and $u \cedge{-\sigma} v_{-\sigma}$.
Finally, suppose that $u \noedge v_{\sigma}$ and $u \cedge{-\sigma} v_{-\sigma}$.
Take an induced path $x_0x_1 \cdots x_k$ in $G_{-\sigma}$ such that $x_0 \in W$, $x_i \in X_{\sigma}$ for $1 \leq i \leq k-1$ and $x_k = x$ (it exists by construction of $X_{\sigma}$ and (\ref{eq:proof_main_property_edges})).
Then (\ref{eq:proof_main_property_edges}) and Lemma \ref{lem:induced_path} imply that the path $v_{-\sigma} x_0x_1 \cdots x_k$ is an induced path in $G_{-\sigma}$, while $v_{-\sigma} \cedge{-\sigma} u \cedge{-\sigma} x_k$.
Thus (C1) implies that $u \cedge{-\sigma} x_i$ for any $0 \leq i \leq k-1$; in particular $v_{\sigma} \noedge u \cedge{-\sigma} x_0 \in W$, therefore $u \in X_{-\sigma} \cup Y_{-\sigma}$.
Hence the claim for $N_{-\sigma}(x)$ holds, therefore the proof is concluded.
\end{proof}
\begin{lem}
\label{lem:proof_main_step2_5}
In the above setting, if $\sigma \in \{+,-\}$ and $y \in Y_{\sigma}$, then $N_{-\sigma}(y) \subset V' \cup N_{-\sigma}(v_{\sigma})$.
\end{lem}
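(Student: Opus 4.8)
The plan is to fix an arbitrary $u \in N_{-\sigma}(y)$ and to show that $u \in X_{-\sigma} \cup Y_{-\sigma} \cup N_{-\sigma}(v_{\sigma})$, which is contained in $V' \cup N_{-\sigma}(v_{\sigma})$. First I would record the basic facts about $y$: by Lemma \ref{lem:proof_main_step2_1} we have $y \cedge{\sigma} v_{\sigma}$, and by (\ref{eq:proof_main_property_edges}) we have $y \noedge v_{-\sigma}$ while $y$ admits no $(-\sigma)$-edge into $W \cup X_{\sigma}$. From $u \cedge{-\sigma} y$ these give $u \neq v_{\sigma}$ (otherwise $v_{\sigma}$ would join $y$ with both signs), $u \neq v_{-\sigma}$ (otherwise $y \edge v_{-\sigma}$), and $u \notin W \cup X_{\sigma}$ (otherwise $uy \in [X_{\sigma} \cup W, Y_{\sigma}] \cap E_{-\sigma} = \emptyset$). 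Hence it indeed suffices to place $u$ in $X_{-\sigma} \cup Y_{-\sigma} \cup N_{-\sigma}(v_{\sigma})$.

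Next I would locate a $\sigma$-neighbor of $y$ inside the $(-\sigma)$-closure. Since $y$ lies in the boundary set $Y_{\sigma} = \partial_{-\sigma}(W \cup v_{-\sigma}; W \cup (V \setminus N(v_{-\sigma})))$ and $y \noedge v_{-\sigma}$, there is $w_0 \in W \cup X_{\sigma}$ with $y \cedge{\sigma} w_0$. This $w_0$ in turn has a $(-\sigma)$-neighbor $z \in W \cup X_{\sigma} \cup v_{-\sigma}$: if $w_0 \in W$ take $z = v_{-\sigma}$ (recall $w_0 \cedge{-\sigma} v_{-\sigma}$), and if $w_0 \in X_{\sigma}$ take a neighbor $z \in W \cup X_{\sigma}$ of $w_0$ inside its $(-\sigma)$-connected component, which exists because $w_0$ is not a seed vertex of $W \cup v_{-\sigma}$ and is nonadjacent to $v_{-\sigma}$ by (\ref{eq:proof_main_property_edges}). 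Then I would apply (C2) to the pattern $u \cedge{-\sigma} y \cedge{\sigma} w_0 \cedge{-\sigma} z$; the branch of (C2) that would place $z$ adjacent to $y$ is excluded, since $y \noedge v_{-\sigma}$ and $y$ has no $(-\sigma)$-edge into $W \cup X_{\sigma}$, so (C2) forces $u \cedge{-\sigma} w_0$.

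Finally, having $u \in N_{-\sigma}(w_0)$ with $w_0 \in W \cup X_{\sigma}$, I would invoke the already-proven neighborhood bounds: Lemma \ref{lem:proof_main_step2_3} (with the roles of $\sigma$ and $-\sigma$ interchanged) if $w_0 \in W$, or the second containment of Lemma \ref{lem:proof_main_step2_4} if $w_0 \in X_{\sigma}$. In both cases $u$ is confined to $W \cup v_{-\sigma} \cup X_{\sigma} \cup X_{-\sigma} \cup Y_{-\sigma} \cup N_{-\sigma}(v_{\sigma})$, and since $u \notin W \cup X_{\sigma}$ and $u \neq v_{-\sigma}$ we obtain $u \in X_{-\sigma} \cup Y_{-\sigma} \cup N_{-\sigma}(v_{\sigma}) \subset V' \cup N_{-\sigma}(v_{\sigma})$, as required.

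The main obstacle I anticipate is the middle step: passing from the raw neighbor $u$ of the boundary vertex $y$ (which itself lies outside the $(-\sigma)$-closure structure) to a neighbor $w_0$ sitting inside $W \cup X_{\sigma}$, so that the structural Lemmas \ref{lem:proof_main_step2_3} and \ref{lem:proof_main_step2_4} become applicable. Making this rigorous hinges on both the existence of the auxiliary $(-\sigma)$-neighbor $z$ of $w_0$ and the verification, via (\ref{eq:proof_main_property_edges}), that $y$ is $(-\sigma)$-nonadjacent to $z$, which is exactly what rules out the wrong branch of (C2).
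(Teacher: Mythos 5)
Your argument is essentially identical to the paper's: both take $u \in N_{-\sigma}(y)$, produce the chain $u \cedge{-\sigma} y \cedge{\sigma} w_0 \cedge{-\sigma} z$ with $w_0 \in W \cup X_{\sigma}$ and $z \in W \cup X_{\sigma} \cup v_{-\sigma}$, rule out the branch $y \cedge{-\sigma} z$ of (C2) via (\ref{eq:proof_main_property_edges}) to get $u \cedge{-\sigma} w_0$, and then conclude from Lemmas \ref{lem:proof_main_step2_3} and \ref{lem:proof_main_step2_4}. Your extra bookkeeping (showing $u \notin W \cup X_{\sigma}$, $u \neq v_{\pm}$) is harmless but not needed, since those lemmas already confine $N_{-\sigma}(w_0)$ to $V' \cup N_{-\sigma}(v_{\sigma})$.
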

\begin{proof}
Let $u \in N_{-\sigma}(y)$.
Then $u \neq v_{\sigma}$ and $u \neq v_{-\sigma}$ by Lemma \ref{lem:proof_main_step2_1} and (\ref{eq:proof_main_property_edges}).
Now we have $y \cedge{\sigma} x$ for some $x \in X_{\sigma} \cup W$, and $x \cedge{-\sigma} w$ for some $w \in X_{\sigma} \cup W \cup v_{-\sigma}$.
Thus $u \cedge{-\sigma} y \cedge{\sigma} x \cedge{-\sigma} w$, therefore (C2) implies that $u \cedge{-\sigma} x$ (since $y \nocedge{-\sigma} w$ by (\ref{eq:proof_main_property_edges})).
Now since $x \in X_{\sigma} \cup W$, the claim follows from Lemmas \ref{lem:proof_main_step2_3} and \ref{lem:proof_main_step2_4}.
\end{proof}

Owing to these results, here we prove that $S(G) \neq \emptyset$ in the case $V' \neq V$.
By the induction hypothesis, there is a vertex $v \in S(G|_{V'})$.
By (\ref{eq:proof_main_property_simplicial}), we have $v \in W \cup X_{\sigma}$ for some $\sigma \in \{+,-\}$.
From now, we show that $v \in S(G)$.\\
\quad
For the condition (S1), we show that $u \cedge{\tau} u'$ if $\tau \in \{+,-\}$, $u,u' \in N_{\tau}(v)$ and $u \neq u'$.
This holds if $u,u' \in V'$ since $v \in S(G|_{V'})$; thus suppose that $u \not\in V'$ or $u' \not\in V'$, say $u \not\in V'$.
Then, since $v \in W \cup X_{\sigma}$, we have $u \in N_{\tau}(v_{-\tau})$ by Lemmas \ref{lem:proof_main_step2_3} and \ref{lem:proof_main_step2_4}.
Moreover, we have $N_{\tau}(v) \subset N_{\tau}\left[u\right]$ by Lemmas \ref{lem:proof_main_step2_2}, \ref{lem:proof_main_step2_3} and \ref{lem:proof_main_step2_4}.
Thus we have $u \cedge{\tau} u'$, as desired.
Hence the condition (S1) is satisfied.\\
\quad
For the condition (S2), first we show that $v \cedge{-\sigma} u$ if $u \cedge{-\sigma} u' \cedge{\sigma} v$.
This holds if $u,u' \in V'$ since $v \in S(G|_{V'})$; thus suppose that $u \not\in V'$ or $u' \not\in V'$.
Now if $u' \in V'$ and $u \not\in V'$, then we have $u' \in V' \setminus (Y_{-\sigma} \cup v_{-\sigma})$ by Lemmas \ref{lem:proof_main_step2_3} and \ref{lem:proof_main_step2_4}, therefore $u \in N_{-\sigma}(v_{\sigma})$ by Lemmas \ref{lem:proof_main_step2_3}, \ref{lem:proof_main_step2_4} and \ref{lem:proof_main_step2_5}.
Thus Lemma \ref{lem:proof_main_step2_2} implies that $v \cedge{-\sigma} u$ in this case.
On the other hand, if $u' \not\in V'$, then $u' \in N_{\sigma}(v_{-\sigma})$ by Lemmas \ref{lem:proof_main_step2_3} and \ref{lem:proof_main_step2_4}.
Now since $v \in X_{\sigma} \cup W$, we have $v \cedge{-\sigma} x$ for some $x \in X_{\sigma} \cup W \cup v_{-\sigma}$, while $x \cedge{\sigma} u'$ by Lemma \ref{lem:proof_main_step2_2}.
Thus we have $v \cedge{-\sigma} x \cedge{\sigma} u' \cedge{-\sigma} u$, therefore (C2) implies that $v \cedge{-\sigma} u$ in this case.
Hence we have $v \cedge{-\sigma} u$, as desired.\\
\quad
Secondly, we show that $v \cedge{\sigma} u$ if $u \cedge{\sigma} u' \cedge{-\sigma} v$.
This holds if $u,u' \in V'$ since $v \in S(G|_{V'})$; thus suppose that $u \not\in V'$ or $u' \not\in V'$.
Now if $u' \in V'$ and $u \not\in V'$, then we have $u' \in V' \setminus (Y_{\sigma} \cup v_{\sigma})$ by Lemmas \ref{lem:proof_main_step2_3} and \ref{lem:proof_main_step2_4}, therefore $u \in N_{\sigma}(v_{-\sigma})$ by Lemmas \ref{lem:proof_main_step2_3}, \ref{lem:proof_main_step2_4} and \ref{lem:proof_main_step2_5}.
Thus Lemma \ref{lem:proof_main_step2_2} implies that $v \cedge{\sigma} u$ in this case.
On the other hand, if $u' \not\in V'$, then $u' \in N_{-\sigma}(v_{\sigma})$ by Lemmas \ref{lem:proof_main_step2_3} and \ref{lem:proof_main_step2_4}.
Now we have $u \cedge{\sigma} u' \cedge{-\sigma} v_{\sigma} \cedge{\sigma} v$ by Lemma \ref{lem:proof_main_step2_1}, therefore (C2) implies that $v \cedge{\sigma} u$ in this case.
Thus we have $v \cedge{\sigma} u$, as desired.
Hence the condition (S2) holds.\\
\quad
Thus we have $S(G) \neq \emptyset$ if $V' \neq V$, as desired.

Moreover, by applying the above result, we have the following corollaries:
\begin{cor}
\label{cor:proof_main_step2_1}
In the above setting, suppose that $u \cedge{\sigma} v \cedge{-\sigma} w \cedge{\sigma} x$ and $u \noedge w$ for some $\sigma \in \{+,-\}$ and distinct vertices $u$, $v$, $w$, and $x$.
Then $S(G) \neq \emptyset$.
\end{cor}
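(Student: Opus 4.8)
The plan is to re-apply the implication established at the end of Section~\ref{subsec:proof_main_theorem_2}---that $S(G)\neq\emptyset$ whenever the set $V'$ attached to the distinguished pair is a proper subset of $V$---but to a \emph{new} choice of distinguished pair read off from the hypothesized path. The point is that the middle vertex $v$ already witnesses such a pair: since $u\cedge{\sigma}v$ and $w\cedge{-\sigma}v$ while $u\noedge w$, the vertex $v$ is a common neighbour of $u$ and $w$ joined to them by edges of opposite signs. Concretely I would set $v_+:=u,\ v_-:=w$ when $\sigma=+$, and $v_+:=w,\ v_-:=u$ when $\sigma=-$; in either case $v_+\noedge v_-$ (this is exactly $u\noedge w$) and $v\in N_+(v_+)\cap N_-(v_-)$, so this pair satisfies the hypotheses imposed on $(v_+,v_-)$ in Section~\ref{subsec:proof_main_theorem_2}. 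All the remaining hypotheses of that section (connectedness of $G$, $E_\pm\neq\emptyset$, $|V|\geq 4$, and the failure of (\ref{eq:proof_main_condition_1})) are intrinsic to $G$ and still hold, so the whole construction of $W$, $X_\pm$, $Y_\pm$, $V'$ together with Lemmas~\ref{lem:proof_main_step2_1}--\ref{lem:proof_main_step2_5} applies verbatim to this pair.

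With the pair fixed, the crux is to show that the remaining vertex $x$ lies outside the associated $V'$, which immediately gives $V'\neq V$ since $x\in V$. I would establish $x\notin V'$ by excluding $x$ from each piece of $V'=W\cup\{v_+,v_-\}\cup X_+\cup Y_+\cup X_-\cup Y_-$ using only two facts: $X_\tau\cup Y_\tau\subset N_\tau(v_\tau)$ (Lemma~\ref{lem:proof_main_step2_1}) and $(X_\tau\cup Y_\tau)\cap N(v_{-\tau})=\emptyset$ (from (\ref{eq:proof_main_property_edges})). The edge $w\cedge{\sigma}x$ puts $x$ in $N_\sigma(w)$; under either assignment above, $w$ is one of $v_+,v_-$, so $x$ is a $\sigma$-neighbour of that distinguished vertex. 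Since $G$ is simple, $x$ is then \emph{not} a $(-\sigma)$-neighbour of it, which excludes $x$ both from $W$ and from the side $X\cup Y$ lying in $N_{-\sigma}$ of that vertex (by Lemma~\ref{lem:proof_main_step2_1}); and because $x$ \emph{is} adjacent to that same vertex, the disjointness in (\ref{eq:proof_main_property_edges}) excludes $x$ from the other side $X\cup Y$. Finally $x\neq v_+,v_-$ because $u,v,w,x$ are distinct, so indeed $x\notin V'$.

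Having obtained $V'\subsetneq V$ for this choice of distinguished pair, I would invoke the conclusion of Section~\ref{subsec:proof_main_theorem_2} to get $S(G)\neq\emptyset$; this is legitimate because $S(G)$ depends only on $G$, not on the auxiliary pair, and because the implication ``$V'\neq V\Rightarrow S(G)\neq\emptyset$'' was proved for an arbitrary valid pair. I do not expect a genuine obstacle here: the only care needed is the symmetric bookkeeping of the two signs $\sigma=\pm$ (keeping straight which of $u,w$ plays $v_+$ and which plays $v_-$) and the explicit verification that the new pair really re-triggers the entire Second-Step construction; once the roles are fixed, the exclusion of $x$ from every piece of $V'$ is forced by simplicity of $G$ together with Lemma~\ref{lem:proof_main_step2_1} and (\ref{eq:proof_main_property_edges}).
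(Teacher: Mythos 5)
Your proposal is correct and follows essentially the same route as the paper: take $u$ and $w$ as the distinguished pair $v_{\sigma}$, $v_{-\sigma}$, show $x\notin V'$, and invoke the conclusion of the Second Step that $V'\neq V$ implies $S(G)\neq\emptyset$. The only (harmless) difference is in how $x$ is excluded from $V'$: the paper first derives $u\cedge{\sigma}x$ from (C2) and then uses $x\notin W\cup(V\setminus N(v_{\tau}))$, whereas you exclude $x$ using only the edge $w\cedge{\sigma}x$ together with Lemma~\ref{lem:proof_main_step2_1} and (\ref{eq:proof_main_property_edges}).
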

\begin{proof}
By (C2), we have $u \cedge{\sigma} x$.
Now we apply the above argument, where $u$ and $w$ play the roles of $v_{\sigma}$ and $v_{-\sigma}$, respectively.
Then we have $x \not\in V'$, since $x \not\in W \cup (V \setminus N(v_{\tau}))$ for each $\tau \in \{+,-\}$.
Hence we have $S(G) \neq \emptyset$ by the above result, as desired.
\end{proof}
\begin{cor}
\label{cor:proof_main_step2_2}
In the above setting, suppose that $u \cedge{+} v \cedge{-} w$, $u \noedge w$ and $N_+\left[u\right] \cup N_-\left[w\right] \neq V$.
Then $S(G) \neq \emptyset$.
\end{cor}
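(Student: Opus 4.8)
The plan is to reduce this corollary directly to the main result of this subsection, namely that $S(G) \neq \emptyset$ whenever $V' \neq V$. The idea is to make the right identification of the vertices $v_+$ and $v_-$ and then produce a vertex lying outside $V'$, so that the hypothesis $N_+\left[u\right] \cup N_-\left[w\right] \neq V$ can be invoked.

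First I would set $v_+ := u$ and $v_- := w$. Since $u \noedge w$ by hypothesis and $u \cedge{+} v \cedge{-} w$ gives $v \in N_+(u) \cap N_-(w)$, we have $v_+ \noedge v_-$ and $v \in W = N_+(v_+) \cap N_-(v_-)$, so $W \neq \emptyset$. Hence this configuration matches \lq\lq the above setting,'' and all of Lemmas \ref{lem:proof_main_step2_1}--\ref{lem:proof_main_step2_5} together with the conclusion that $S(G) \neq \emptyset$ when $V' \neq V$ are available. The key step is then the containment $V' \subset N_+\left[u\right] \cup N_-\left[w\right]$. Recalling $V' = W \cup \{v_+,v_-\} \cup X_+ \cup Y_+ \cup X_- \cup Y_-$, I would check each piece: $W = N_+(u) \cap N_-(w) \subset N_+\left[u\right]$ and $v_+ = u \in N_+\left[u\right]$; also $v_- = w \in N_-\left[w\right]$; and by Lemma \ref{lem:proof_main_step2_1} we have $X_+ \cup Y_+ \subset N_+(v_+) = N_+(u) \subset N_+\left[u\right]$ as well as $X_- \cup Y_- \subset N_-(v_-) = N_-(w) \subset N_-\left[w\right]$. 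Assembling these inclusions gives $V' \subset N_+\left[u\right] \cup N_-\left[w\right]$.

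Finally, since the hypothesis states $N_+\left[u\right] \cup N_-\left[w\right] \neq V$, the containment just established forces $V' \neq V$, and the main result of this subsection yields $S(G) \neq \emptyset$, as desired. I do not anticipate any genuine obstacle: the entire content is the elementary inclusion above, which is immediate from Lemma \ref{lem:proof_main_step2_1} and the definition of $W$. The only thing that requires a moment of care is the bookkeeping that each of the six constituent sets of $V'$ is captured by exactly one of the two closed neighborhoods $N_+\left[u\right]$ and $N_-\left[w\right]$.
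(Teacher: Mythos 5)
Your proposal is correct and follows exactly the paper's own argument: take $v_+ = u$, $v_- = w$, use Lemma \ref{lem:proof_main_step2_1} (together with the definition of $W$) to get $V' \subset N_+\left[u\right] \cup N_-\left[w\right]$, and conclude $V' \neq V$ so that the main result of the subsection applies. The only difference is that you spell out the inclusion set by set where the paper states it in one line.
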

\begin{proof}
We apply the above argument, where $u$ and $w$ plays the roles of $v_+$ and $v_-$, respectively.
In this setting, Lemma \ref{lem:proof_main_step2_1} implies that $V' \subset N_+\left[u\right] \cup N_-\left[w\right]$, therefore $V' \neq V$ by the assumption.
Thus we have $S(G) \neq \emptyset$ by the above result.
\end{proof}

\subsection{Proof of Theorem \ref{thm:characterization}, Final Step}
\label{subsec:proof_main_theorem_3}

Now it suffices to consider the case that condition (\ref{eq:proof_main_condition_1}) does not hold and we have $V' = V$ under the notations used in Section \ref{subsec:proof_main_theorem_2}.
Moreover, owing to Corollaries \ref{cor:proof_main_step2_1} and \ref{cor:proof_main_step2_2}, we may assume without loss of generality that
\begin{equation}
\label{eq:proof_main_condition_2}
\mbox{ if } u \cedge{+} u' \cedge{-} u'' \mbox{ and } u \noedge u'', \mbox{ then } N_-(u) = N_+(u'') = \emptyset \enspace,
\end{equation}
\begin{equation}
\label{eq:proof_main_condition_3}
\mbox{ if } u \cedge{+} u' \cedge{-} u'' \mbox{ and } u \noedge u'', \mbox{ then } N_+\left[u\right] \cup N_-\left[u''\right] = V \enspace.
\end{equation}
By these assumptions, we have $N_{-\sigma}(v_{\sigma}) = \emptyset$ for each $\sigma \in \{+,-\}$.
Moreover, we have the following results:
\begin{lem}
\label{lem:proof_main_step3_1}
In the above setting, if $\sigma \in \{+,-\}$ and $y \in Y_{\sigma}$, then we have $N_{-\sigma}(y) = \emptyset$.
\end{lem}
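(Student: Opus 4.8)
The plan is to give a direct proof (no contradiction is needed): for a fixed $y \in Y_\sigma$ I will exhibit a ``cherry'' $y \cedge{\sigma} b \cedge{-\sigma} c$ with $y \noedge c$, and then quote condition (\ref{eq:proof_main_condition_2}) with $y$, $b$, $c$ in the roles of $u$, $u'$, $u''$; its conclusion is exactly $N_{-\sigma}(y) = \emptyset$. Thus everything reduces to producing such $b$ and $c$. First I would unwind the definition $Y_\sigma = \partial_{-\sigma}(W \cup v_{-\sigma}; V'')$, where $V'' = W \cup (V \setminus N(v_{-\sigma}))$: this yields $y \in V''$, $y \notin \mathrm{cl}_{-\sigma}(W \cup v_{-\sigma}; V'')$, and the existence of a $\sigma$-neighbour $x$ of $y$ with $x \in \mathrm{cl}_{-\sigma}(W \cup v_{-\sigma}; V'')$. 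Because $N_\sigma(v_{-\sigma}) = \emptyset$ in the present setting, $x \neq v_{-\sigma}$, so $x \in W \cup X_\sigma$; and since $y \notin \mathrm{cl}_{-\sigma} \supset W$, we also have $y \noedge v_{-\sigma}$.

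The one structural fact I would isolate is that $y$ sends no $-\sigma$-edge into $\mathrm{cl}_{-\sigma}(W \cup v_{-\sigma}; V'')$: this set is a union of connected components of $G_{-\sigma}|_{V''}$, and $y$ is a vertex of $V''$ outside it, so a $-\sigma$-edge from $y$ to any $z \in \mathrm{cl}_{-\sigma}$ would put $y$ into $z$'s component, i.e.\ into $\mathrm{cl}_{-\sigma}$. With this, the argument splits. If $y$ has a $\sigma$-neighbour $w \in W$, then $w \cedge{-\sigma} v_{-\sigma}$ and $y \noedge v_{-\sigma}$, so condition (\ref{eq:proof_main_condition_2}) applied to $y \cedge{\sigma} w \cedge{-\sigma} v_{-\sigma}$ immediately gives $N_{-\sigma}(y) = \emptyset$. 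Otherwise $y$ has no $\sigma$-neighbour in $W$, and here I would use that $\mathrm{cl}_{-\sigma}(W \cup v_{-\sigma}; V'')$ is $-\sigma$-connected (since $v_{-\sigma}$ is $-\sigma$-adjacent to every vertex of $W$) to pick a $-\sigma$-path $x_0 x_1 \cdots x_k = x$ inside it with $x_0 \in W$; letting $i^*$ be the least index with $y \cedge{\sigma} x_{i^*}$ (which exists, as $y \cedge{\sigma} x_k$), I would apply (\ref{eq:proof_main_condition_2}) to $y \cedge{\sigma} x_{i^*} \cedge{-\sigma} x_{i^*-1}$.

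The only point needing care, and the real crux, is checking $y \noedge x_{i^*-1}$ so that (\ref{eq:proof_main_condition_2}) is applicable: the relation $y \nocedge{-\sigma} x_{i^*-1}$ is the component fact above (all $x_i$ lie in $\mathrm{cl}_{-\sigma}$), while $y \nocedge{\sigma} x_{i^*-1}$ follows from the minimality of $i^*$ when $i^*-1 \geq 1$ and from the case hypothesis when $i^*-1 = 0$ (the latter also giving $i^* \geq 1$). Conceptually, the idea that makes the proof short is to feed $y$ into (\ref{eq:proof_main_condition_2}) as the $\sigma$-\emph{endpoint} of a cherry rather than as its middle vertex: a naive attempt starting from a hypothetical $u \in N_{-\sigma}(y)$ and the path $v_\sigma \cedge{\sigma} y \cedge{-\sigma} u$ only returns $N_\sigma(u) = \emptyset$ and never directly bounds $N_{-\sigma}(y)$.
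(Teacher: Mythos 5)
Your proposal is correct and is essentially the paper's own argument: both locate the first vertex, along a $-\sigma$-path in $\mathrm{cl}_{-\sigma}(W\cup v_{-\sigma};V'')$ starting from $v_{-\sigma}$ and $W$, that is a $\sigma$-neighbour of $y$, verify that $y$ is non-adjacent to its predecessor via (\ref{eq:proof_main_property_edges}) and minimality, and feed the resulting cherry $y \cedge{\sigma} x_{i^*} \cedge{-\sigma} x_{i^*-1}$ into condition (\ref{eq:proof_main_condition_2}). The only differences are cosmetic (direct statement versus the paper's contradiction framing, and your explicit split on whether $y$ has a $\sigma$-neighbour in $W$, which the paper absorbs by prepending $x_0 = v_{-\sigma}$ to the path).
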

\begin{proof}
Assume contrary that $N_{-\sigma}(y) \neq \emptyset$.
Owing to construction of $Y_{\sigma}$, take an induced path $x_1x_2 \cdots x_k$ in $G_{-\sigma}$, $k \geq 1$, with $k$ minimal, such that $x_1 \in W$, $x_i \in X_{\sigma}$ for $2 \leq i \leq k$ and $y \cedge{\sigma} x_k$.
Put $x_0 = v_{-\sigma}$.
Then we have $y \cedge{\sigma} x_k \cedge{-\sigma} x_{k-1}$ and $N_{-\sigma}(y) \neq \emptyset$, while $y \noedge x_{k-1}$ by (\ref{eq:proof_main_property_edges}) and minimality of the $k$.
This contradicts the condition (\ref{eq:proof_main_condition_2}).
Hence the proof is concluded.
\end{proof}
\begin{lem}
\label{lem:proof_main_step3_2}
In the above setting, we have $\left[X_+,X_-\right] \cap E = \emptyset$, and $\left[X_{\sigma},W\right] \cap E_{\sigma} = \emptyset$ for each $\sigma \in \{+,-\}$.
\end{lem}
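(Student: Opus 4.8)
The plan is to derive both assertions from condition (\ref{eq:proof_main_condition_2}), used in its symmetric form: by reversing the roles of the two signs, (\ref{eq:proof_main_condition_2}) says that whenever $u \cedge{\sigma} u' \cedge{-\sigma} u''$ and $u \noedge u''$, we have $N_{-\sigma}(u) = N_{\sigma}(u'') = \emptyset$. I would combine this with three facts already available in this subsection: every $x \in X_{\sigma}$ satisfies $x \cedge{\sigma} v_{\sigma}$ (Lemma \ref{lem:proof_main_step2_1}) and $x \noedge v_{-\sigma}$ (by (\ref{eq:proof_main_property_edges})); every $w \in W = N_+(v_+) \cap N_-(v_-)$ satisfies $w \cedge{-\sigma} v_{-\sigma}$; and, crucially, every $x \in X_{\sigma}$ has $N_{-\sigma}(x) \neq \emptyset$, because $X_{\sigma} = \mathrm{cl}_{-\sigma}(W \cup v_{-\sigma};\,W \cup (V \setminus N(v_{-\sigma}))) \setminus (W \cup v_{-\sigma})$ places $x$ in a $(-\sigma)$-connected component meeting $W \cup v_{-\sigma}$ while $x$ itself lies outside $W \cup v_{-\sigma}$, so $x$ must have a $(-\sigma)$-neighbour inside the closure.

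First I would show $\left[X_{\sigma},W\right] \cap E_{\sigma} = \emptyset$. Suppose to the contrary that $x \cedge{\sigma} w$ for some $x \in X_{\sigma}$ and $w \in W$. Then $x \cedge{\sigma} w \cedge{-\sigma} v_{-\sigma}$ with $x \noedge v_{-\sigma}$, so the symmetric form of (\ref{eq:proof_main_condition_2}) forces $N_{-\sigma}(x) = \emptyset$. This contradicts the fact that a vertex of $X_{\sigma}$ has a $(-\sigma)$-neighbour, so no such $\sigma$-edge can exist.

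Next I would show $\left[X_+,X_-\right] \cap E = \emptyset$. Assume $x_+ \edge x_-$ for some $x_+ \in X_+$ and $x_- \in X_-$, an edge of either sign. Recall $x_+ \cedge{+} v_+$, $x_- \cedge{-} v_-$, and $x_+ \noedge v_-$, $x_- \noedge v_+$ by Lemma \ref{lem:proof_main_step2_1} and (\ref{eq:proof_main_property_edges}). If the edge is a $+$-edge, then $x_+ \cedge{+} x_- \cedge{-} v_-$ with $x_+ \noedge v_-$, and (\ref{eq:proof_main_condition_2}) gives $N_-(x_+) = \emptyset$, contradicting that $x_+ \in X_+$ has a $(-)$-neighbour. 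If the edge is a $(-)$-edge, the symmetric argument applied to $v_+ \cedge{+} x_+ \cedge{-} x_-$ (with $v_+ \noedge x_-$) gives $N_+(x_-) = \emptyset$, contradicting that $x_- \in X_-$ has a $+$-neighbour. Either way we reach a contradiction, so there is no edge between $X_+$ and $X_-$.

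The arguments are short, so I expect no serious obstacle; the only point requiring care is the sign bookkeeping when invoking (\ref{eq:proof_main_condition_2}) in both orientations, together with the observation—immediate from the definition of $\mathrm{cl}_{-\sigma}$—that a vertex of $X_{\sigma}$ cannot be $(-\sigma)$-isolated and therefore always supplies the $(-\sigma)$-neighbour needed to contradict the conclusion $N_{-\sigma}(x) = \emptyset$.
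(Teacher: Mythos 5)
Your proof is correct and follows essentially the same route as the paper's: both arguments rest on Lemma \ref{lem:proof_main_step2_1}, property (\ref{eq:proof_main_property_edges}), the observation that each vertex of $X_{\sigma}$ has a $(-\sigma)$-neighbour by construction, and a contradiction with condition (\ref{eq:proof_main_condition_2}). The only cosmetic difference is that the paper packages the two claims into the single statement $\left[X_{\sigma},W \cup X_{-\sigma}\right] \cap E_{\sigma} = \emptyset$ before running the identical argument.
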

\begin{proof}
It suffices to show that $\left[X_{\sigma},W \cup X_{-\sigma}\right] \cap E_{\sigma} = \emptyset$ for each $\sigma \in \{+,-\}$.
Assume contrary that $x \cedge{\sigma} x'$ for some $x \in X_{\sigma}$ and $x' \in W \cup X_{-\sigma}$.
Then we have $x \cedge{\sigma} x' \cedge{-\sigma} v_{-\sigma}$ by Lemma \ref{lem:proof_main_step2_1}, while $x \noedge v_{-\sigma}$ by (\ref{eq:proof_main_property_edges}).
Moreover, we have $N_{-\sigma}(x) \neq \emptyset$ by construction of $X_{\sigma}$.
This contradicts (\ref{eq:proof_main_condition_2}).
Hence the claim holds.
\end{proof}
\begin{lem}
\label{lem:proof_main_step3_3}
In the above setting, for each $\sigma \in \{+,-\}$, let $W_\sigma$ denote the union of vertex sets of the connected components of $G|_W$ that are joined with $X_{\sigma}$ by an edge in $E_{-\sigma}$.
Then $W_+ \cap W_- = \emptyset$.
\end{lem}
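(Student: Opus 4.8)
The plan is to argue by contradiction, exploiting that each $W_\sigma$ is by definition a union of connected components of $G|_W$, so that $W_+ \cap W_-$ is again a union of such components; were it nonempty it would contain an entire component $C$ of $G|_W$ lying in both $W_+$ and $W_-$, hence joined to $X_+$ and to $X_-$ by edges (necessarily in $E_-$ and $E_+$ respectively, by Lemma \ref{lem:proof_main_step3_2}). Writing $A$ (resp.\ $B$) for the set of vertices of $C$ incident to an edge reaching $X_+$ (resp.\ $X_-$), both $A$ and $B$ are nonempty, and the whole task reduces to showing that $A$ and $B$ cannot coexist inside one connected $C$.

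The engine throughout is condition (\ref{eq:proof_main_condition_2}): whenever $\alpha \cedge{+} \beta \cedge{-} \gamma$ with $\alpha \noedge \gamma$, it forces $N_+(\gamma) = \emptyset$ and $N_-(\alpha) = \emptyset$. Since every vertex of $W$ is $E_+$-adjacent to $v_+$ and $E_-$-adjacent to $v_-$, and every vertex of $X_+$ is $E_+$-adjacent to $v_+$ by Lemma \ref{lem:proof_main_step2_1}, such a conclusion is contradictory in all the cases I need. First I would record three consequences. (i) $A \cap B = \emptyset$: if some $z \in C$ had $z \cedge{-} x$ with $x \in X_+$ and $z \cedge{+} x'$ with $x' \in X_-$, then $x \noedge x'$ by Lemma \ref{lem:proof_main_step3_2}, and (\ref{eq:proof_main_condition_2}) applied to $x' \cedge{+} z \cedge{-} x$ gives $N_+(x) = \emptyset$, contradicting $x \cedge{+} v_+$. (ii) $A$ is closed under the $E_+$-edges of $C$: if $z \in A$ and $z \cedge{+} z'$ with $z' \notin A$, then $z' \noedge x$ (an edge $z'x$ would lie in $E_-$ by Lemma \ref{lem:proof_main_step3_2}, placing $z'$ in $A$), so (\ref{eq:proof_main_condition_2}) on $z' \cedge{+} z \cedge{-} x$ again yields $N_+(x) = \emptyset$. (iii) Symmetrically, $B$ is closed under the $E_-$-edges of $C$, the contradiction now being $N_+(z') = \emptyset$ for a vertex $z' \in W$.

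With these in hand I would finish by a shortest-path argument. Choose $a \in A$ and $b \in B$ at minimum distance in $G|_C$ and let $P = z_0 z_1 \cdots z_m$ (with $z_0 = a$, $z_m = b$) be a shortest path between them; then $P$ is induced, its interior avoids $A \cup B$, and $m \geq 1$ by (i). The closure properties (ii) and (iii) force the first edge $z_0 z_1$ to lie in $E_-$ and the last edge $z_{m-1} z_m$ to lie in $E_+$, so in particular $m \geq 2$. Letting $i$ be the least index with $z_i z_{i+1} \in E_+$, we obtain $z_{i-1} \cedge{-} z_i \cedge{+} z_{i+1}$ with $z_{i-1} \noedge z_{i+1}$ (as $P$ is induced). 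Then (\ref{eq:proof_main_condition_2}) applied to $z_{i+1} \cedge{+} z_i \cedge{-} z_{i-1}$ gives $N_-(z_{i+1}) = \emptyset$, contradicting $z_{i+1} \in W$ and hence $z_{i+1} \cedge{-} v_-$. This contradiction establishes $W_+ \cap W_- = \emptyset$.

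I expect the main obstacle to be exactly the passage from adjacent vertices to an arbitrary connected component: a single vertex cannot touch both $X_+$ and $X_-$, but a priori the witnesses $a$ and $b$ are far apart and the path joining them may wander through vertices lying in neither $A$ nor $B$. The device that overcomes this is to route along a shortest (hence induced) path and to locate a single $E_-$-to-$E_+$ sign change, which is precisely the configuration that (\ref{eq:proof_main_condition_2}) forbids; the closure properties (ii) and (iii) are what guarantee that such a sign change must occur.
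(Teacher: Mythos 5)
Your proof is correct and follows essentially the same strategy as the paper's: both arguments assume a common component, locate a sign change from an $E_-$ edge to an $E_+$ edge along an induced path running from the $X_+$-attachment to the $X_-$-attachment, and contradict condition (\ref{eq:proof_main_condition_2}) at that sign change. The only real difference is how the degenerate positions of the sign change are excluded --- the paper runs its path from a vertex of $X_+$ to a vertex of $X_-$ and uses (C2) with Lemma \ref{lem:proof_main_step2_1} to force the two neighbours of the change point into $W$, while you keep the path inside the component and achieve the same effect with your closure properties (ii) and (iii).
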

\begin{proof}
Assume contrary that $G|_{W_+}$ and $G|_{W_-}$ involve a common connected component.
Then there are a vertex $w_0 \in X_+$, an induced path $w_1w_2 \cdots w_{k-1}$ in $G|_W$ and a vertex $w_k \in X_-$ such that $w_0 \cedge{-} w_1$ and $w_{k-1} \cedge{+} w_k$.
Now there exists an index $1 \leq i \leq k-1$ such that $w_{i-1} \cedge{-} w_i \cedge{+} w_{i+1}$.
We have $v_+ \cedge{+} w_{i-1}$ and $v_- \cedge{-} w_{i+1}$ by Lemma \ref{lem:proof_main_step2_1}, therefore $v_+ \cedge{+} w_{i+1}$ and $v_- \cedge{-} w_{i-1}$ by (C2).
Thus we have $w_{i-1},w_{i+1} \in W$, therefore $2 \leq i \leq k-2$ and $w_{i-1} \noedge w_{i+1}$ since $w_1 \cdots w_{k-1}$ is an induced path in $G$.
This contradicts (\ref{eq:proof_main_condition_2}).
Hence the proof is concluded.
\end{proof}
\begin{lem}
\label{lem:proof_main_step3_4}
In the above setting, if $v \in V$, $N_+(v) \neq \emptyset$ and $N_-(v) \neq \emptyset$, then the condition (S1) for $v$ implies the condition (S2) for $v$.
\end{lem}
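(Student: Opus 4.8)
The plan is to mimic the argument behind Remark \ref{rem:NecessaryCondition}: reduce condition (S2) to a single adjacency, and then force that adjacency using the structural assumption (\ref{eq:proof_main_condition_2}) together with the hypothesis that $v$ has neighbors of both signs. Fix $\sigma \in \{+,-\}$ and suppose $u \cedge{-\sigma} w \cedge{\sigma} v$; the goal is to deduce $u \cedge{-\sigma} v$. First I would note that $u$, $v$, $w$ are automatically distinct, since otherwise the pair $vw$ (or $uw$) would carry two signs, and that the whole content of the lemma reduces to proving $u \edge v$.

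I would establish this reduction as follows. Suppose $u \edge v$. If $u \cedge{\sigma} v$, then $u,w \in N_{\sigma}(v)$ (recall $w \cedge{\sigma} v$), so condition (S1) for $v$, which makes $N_{\sigma}[v]$ a clique in $G_{\sigma}$, forces $u \cedge{\sigma} w$, contradicting $u \cedge{-\sigma} w$. Hence the edge $uv$ must be a $(-\sigma)$-edge, i.e.\ $u \cedge{-\sigma} v$, which is exactly the conclusion of (S2). Thus (S1) already disposes of the case $u \edge v$, and only the case $u \noedge v$ remains.

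The key step is to rule out $u \noedge v$. Reading the hypothesis path backwards as $v \cedge{\sigma} w \cedge{-\sigma} u$ with $v \noedge u$, this matches the premise of condition (\ref{eq:proof_main_condition_2}): for $\sigma = +$ directly (taking the roles $(u,u',u'') = (v,w,u)$), and for $\sigma = -$ after exchanging the roles of $v$ and $u$ (so that the pattern $\cedge{+}\cdots\cedge{-}$ reappears). In either case (\ref{eq:proof_main_condition_2}) yields $N_{-\sigma}(v) = \emptyset$. This contradicts the standing hypothesis $N_{-\sigma}(v) \neq \emptyset$, which holds because both $N_+(v)$ and $N_-(v)$ are assumed nonempty. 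Therefore $u \edge v$, and by the previous paragraph (S2) holds for this $\sigma$; running the argument for $\sigma = +$ and $\sigma = -$ gives (S2) for $v$.

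The only delicate point I anticipate is the sign bookkeeping: one must check that the asymmetrically written condition (\ref{eq:proof_main_condition_2}) really applies to the path $v \cedge{\sigma} w \cedge{-\sigma} u$ for \emph{both} signs, and that for a fixed $\sigma$ one needs only $N_{-\sigma}(v) \neq \emptyset$, so that the two-sided hypothesis $N_+(v),N_-(v) \neq \emptyset$ is precisely what is required to cover the two instances of (S2). No appeal to the finer partition $W$, $X_{\pm}$, $Y_{\pm}$, nor to the other lemmas of this subsection, should be necessary.
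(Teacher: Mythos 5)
Your proposal is correct and follows essentially the same route as the paper: both use the nonemptiness of $N_{-\sigma}(v)$ together with condition (\ref{eq:proof_main_condition_2}) to force $u \edge v$, and then invoke (S1) to exclude $u \cedge{\sigma} v$. Your extra care with the sign bookkeeping (checking both orientations of the premise of (\ref{eq:proof_main_condition_2})) is exactly the detail the paper leaves implicit, and it checks out.
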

\begin{proof}
Suppose that (S1) holds, and let $\sigma \in \{+,-\}$ and $u \cedge{-\sigma} w \cedge{\sigma} v$.
Then we have $N_{-\sigma}(v) \neq \emptyset$ by the assumption, therefore $u \edge v$ by (\ref{eq:proof_main_condition_2}).
Moreover, if $u \cedge{\sigma} v$, then $u,w \in N_{\sigma}(v)$ and $u \nocedge{\sigma} w$, contradicting the condition (S1).
Hence we have $u \cedge{-\sigma} v$, therefore the condition (S2) holds.
\end{proof}

Now suppose that $X_+ \neq \emptyset$ and $X_- \neq \emptyset$, therefore $W_+ \neq \emptyset$ and $W_- \neq \emptyset$.
Put $V_1 = V \setminus (W_- \cup X_-)$ and $G_1 = G|_{V_1}$.
Then $G_1$ is connected by Lemma \ref{lem:proof_main_step2_1}, while $V_1 \neq V$ and $G_1$ has an edge in $E_+$ and an edge in $E_-$.
Thus by the induction hypothesis and Lemma \ref{lem:SE_simplicial_in_border}, we have $v \in S(G_1)$, $N_{(G_1)_+}(v) \neq \emptyset$ and $N_{(G_1)_-}(v) \neq \emptyset$ for some $v \in V_1$.
Now we have $v \in (W \setminus W_-) \cup X_+$ by Lemma \ref{lem:proof_main_step3_1}, therefore $N_G(v) \cap (W_- \cup X_-) = \emptyset$ by Lemma \ref{lem:proof_main_step3_2} and the construction of $W_+$ and $W_-$.
Thus we have $N_G(v) = N_{G_1}(v)$, therefore condition (S1) for $v$ and $G$ holds since $v \in S(G_1)$.
Hence Lemma \ref{lem:proof_main_step3_4} implies that $v \in S(G)$ in this case.

Finally, suppose that $X_+ = \emptyset$ or $X_- = \emptyset$, say $X_- = \emptyset$.
Now we have the following property:
\begin{lem}
\label{lem:proof_main_step3_5}
In the above setting, $Y_- \cup v_-$ is a clique in $G_-$.
\end{lem}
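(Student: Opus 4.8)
We are in the final step of the proof, where condition (\ref{eq:proof_main_condition_1}) fails, $V' = V$, both (\ref{eq:proof_main_condition_2}) and (\ref{eq:proof_main_condition_3}) hold, and we have reduced to the case $X_- = \emptyset$. We must show that $Y_- \cup v_-$ is a clique in $G_-$; that is, for any two distinct vertices in $Y_- \cup v_-$, they are joined by an edge in $E_-$. Since $v_-$ is singled out, the plan is to split into two kinds of pairs: pairs consisting of $v_-$ together with a vertex $y \in Y_-$, and pairs of two vertices $y, y' \in Y_-$.

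**Handling the pairs $v_-\,y$ with $y \in Y_-$.** First I would recall that $Y_- = \partial_+(W \cup v_+; \dots)$ consists of vertices adjacent (in $E_+$, which is the $-\sigma$ color for $\sigma = -$) to the closure set, and by Lemma \ref{lem:proof_main_step2_1} applied with $\sigma = -$ we have $X_- \cup Y_- \subset N_-(v_-)$. Since $X_- = \emptyset$, this gives immediately $Y_- \subset N_-(v_-)$, so every $y \in Y_-$ satisfies $y \cedge{-} v_-$. Thus the pairs involving $v_-$ are already covered, and it remains only to handle pairs $y, y' \in Y_-$.

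**Handling the pairs $y\,y'$ with $y, y' \in Y_-$.** This is the main obstacle. For such $y$ and $y'$, I would first use the definition of $Y_-$ to produce, for each of them, an $E_+$-neighbor in $W \cup X_- = W$ (using $X_- = \emptyset$); so $y \cedge{+} w$ and $y' \cedge{+} w'$ for some $w, w' \in W$. Since $(G|_W)_+$ need not be connected, I expect $w$ and $w'$ may lie in different components, and the argument must produce a $-$-edge between $y$ and $y'$ regardless. The tool to invoke is (\ref{eq:proof_main_condition_2}): from $y \cedge{+} w \cedge{-} v_-$ (using $w \in W = N_+(v_+) \cap N_-(v_-)$) together with the fact that $y \noedge v_-$ would contradict $Y_- \subset N_-(v_-)$, I would instead work from the already-established relations $y \cedge{-} v_-$ and $y' \cedge{-} v_-$. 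The cleanest route is to consider the triple $y \cedge{-} v_- \cedge{-} y'$ inside $G_-$ and apply chordality of $G_-$ (condition (C1)) together with (S1)-type reasoning, or to set up an alternating $4$-path and invoke (C2). Concretely, I would try to find a common witness: since $y \cedge{+} w$ and $v_+ \cedge{+} w$ (as $w \in W \subset N_+(v_+)$), and $y \cedge{-} v_-$, I would build an alternating path through $v_+$ or $w$ linking $y$ to $y'$ and force $y \cedge{-} y'$ by (C2); if a short alternating $4$-path $y \cedge{-} v_- \cedge{?} \cdots$ cannot be closed directly, the fallback is to show that failure of $y \cedge{-} y'$ produces a configuration contradicting (\ref{eq:proof_main_condition_2}) or (\ref{eq:proof_main_condition_3}), exactly as in Lemmas \ref{lem:proof_main_step3_2} and \ref{lem:proof_main_step3_3}.

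**Expected difficulty.** The routine part is the $v_-$ pairs, which follow from Lemma \ref{lem:proof_main_step2_1}. The hard part will be the $y\,y'$ pairs: the challenge is that $y$ and $y'$ may attach to $W$ through different $+$-components, so I cannot simply transport an edge along a single component. I anticipate the decisive step is to exploit that all of $Y_-$ sits in $N_-(v_-)$ and that $N_+(y) = \emptyset$ is forced by Lemma \ref{lem:proof_main_step3_1} (with $\sigma = +$, giving $N_-(y) = \emptyset$ for $y \in Y_+$; for $Y_-$ the analogous statement gives the relevant empty-neighborhood constraint), so that an attempted $+$-edge or non-edge between $y$ and $y'$ is ruled out, leaving a $-$-edge as the only possibility consistent with (\ref{eq:proof_main_condition_2}). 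Thus the proof reduces to a short case analysis closing each non-$(-)$ alternative by contradiction with the standing assumptions.
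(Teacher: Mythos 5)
Your reduction of the problem is right: $Y_-\subset N_-(v_-)$ follows from Lemma \ref{lem:proof_main_step2_1} exactly as you say, so everything hinges on showing $y\cedge{-}y'$ for distinct $y,y'\in Y_-$. But there are two problems with what you propose for that step. First, a sign error: a vertex of $Y_-=\partial_+(W\cup v_+;\cdot)$ is attached to the closure by an edge in $E_{-(+)}=E_-$, not $E_+$; indeed $\left[W,Y_-\right]\cap E_+=\emptyset$ by (\ref{eq:proof_main_property_edges}), so your starting configuration $y\cedge{+}w$ with $w\in W$ cannot occur. The correct starting point is $y\cedge{-}w$ for some $w\in W$ (using $X_-=\emptyset$), and this matters because it is precisely the pattern $v_+\cedge{+}w\cedge{-}y$ with $v_+\noedge y$ that the standing hypotheses are designed to exploit.

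Second, and more importantly, you never actually close the case of two vertices $y,y'\in Y_-$; you list several candidate routes, and the ones you describe concretely do not work. Chordality of $G_-$ says nothing about the triangle $y,v_-,y'$, and no alternating $4$-path can join $y$ to $y'$ through $w$ or $v_+$ because $N_+(y)=\emptyset$ (Lemma \ref{lem:proof_main_step3_1}) and $N_-(v_+)=\emptyset$ kill both sign patterns, so (C2) is unavailable. The argument the paper uses is the one you relegate to a ``fallback'' without executing: from $v_+\cedge{+}w\cedge{-}y$ and $v_+\noedge y$, condition (\ref{eq:proof_main_condition_3}) gives $N_+\left[v_+\right]\cup N_-\left[y\right]=V$; since $(Y_-\setminus y)\cap N(v_+)=\emptyset$ by (\ref{eq:proof_main_property_edges}), every other $y'\in Y_-$ is forced into $N_-(y)$. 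That single application of (\ref{eq:proof_main_condition_3}) is the content of the lemma, and its absence is a genuine gap in your proposal.
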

\begin{proof}
By Lemma \ref{lem:proof_main_step2_1}, it suffices to show that $Y_-$ is a clique in $G_-$.
Let $y \in Y_-$.
Then, since $X_- = \emptyset$, we have $y \cedge{-} w$ for some $w \in W$ by construction of $Y_-$.
Thus we have $v_+ \cedge{+} w \cedge{-} y$, while $v_+ \noedge y$ and $(Y_- \setminus y) \cap N_+(v_+) = \emptyset$ by (\ref{eq:proof_main_property_edges}).
Therefore condition (\ref{eq:proof_main_condition_3}) implies that $Y_- \setminus y \subset N_-(y)$.
Hence the claim holds.
\end{proof}
Put $V_2 = V \setminus v_+$ and $G_2 = G|_{V_2}$.
Then we have $S(G_2) \neq \emptyset$ by the induction hypothesis.
More strongly, there is a $v \in S(G_2)$ such that $v \in W \cup X_+ \cup Y_+$.
In fact, since $G_2$ is connected (every vertex in $G_2$ is joined with $v_-$ by a path in $G_2$), this holds by Lemmas \ref{lem:SE_simplicial_in_border} and \ref{lem:proof_main_step3_1} if $G_2$ has an edge in $E_+$, and by Lemmas \ref{lem:chordal_link_clique} and \ref{lem:proof_main_step3_5} if $G_2$ has no edge in $E_+$.
Now we have $N_{G_+}(v) \subset V \setminus (Y_- \cup v_-)$ by Lemma \ref{lem:proof_main_step3_1}, while $N_-(v_+) = \emptyset$ and $V \setminus (Y_- \cup v_-) \subset N_{G_+}\left[v_+\right]$ by Lemma \ref{lem:proof_main_step2_1}.
Thus the assumption and condition (D) in Lemma \ref{lem:simplicial_expand} are satisfied, where $\{v_+\}$ plays the role of $V''$, therefore we have $v \in S(G)$ by that lemma.

Hence the proof of the \lq\lq if'' part of Theorem \ref{thm:characterization} is concluded.

\section{Special Cases}
\label{sec:special_case}

In this section, we apply Theorem \ref{thm:characterization} to characterize the SE graphs in some subclasses.
Let $G = (V,E)$ be a signed graph throughout this section.
First, we consider the case of signed graphs with four vertices:
\begin{prop}
\label{prop:FourVertices}
If $|V| = 4$, then $G$ is signed-eliminable if and only if one of the following conditions is satisfied:
\begin{description}
\item[(FV1)] $G_+$ or $G_-$ has a vertex of degree three.
\item[(FV2)] Both $G_+$ and $G_-$ are chordal, $G$ is not a mountain, and $G$ has no alternating $4$-path.
\end{description}
\end{prop}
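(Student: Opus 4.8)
The plan is to derive Proposition~\ref{prop:FourVertices} from the full characterization in Theorem~\ref{thm:characterization} by specializing the conditions (C1)--(C3) to the case $|V| = 4$. First I would record the shapes of the relevant configurations on four vertices. Since every graph on at most three vertices is chordal, (C1) just says that neither $G_+$ nor $G_-$ is an induced $4$-cycle. A mountain has $n+1 \ge 4$ vertices and a hill has $n+2 \ge 4$ vertices, so on four vertices only the cases $n=3$ (mountain) and $n=2$ (hill) occur, and an induced mountain or hill must exhaust $V$; thus (C3) reduces to \lq\lq $G$ is neither a mountain nor a hill''. Inspecting Definition~\ref{defn:forbidden_subgraphs}, the four-vertex mountain has degree sequence $(3,1,1,1)$ with its degree-$3$ vertex incident to two edges of one sign and one of the other, whereas the four-vertex hill is a $4$-cycle ($2$-regular) carrying exactly one $(-\sigma)$-edge and three $\sigma$-edges. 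Finally, on four distinct vertices $u,v,w,x$ with $u \cedge{\sigma} v \cedge{-\sigma} w \cedge{\sigma} x$, condition (C2) asserts $u \cedge{\sigma} x$ together with $u \cedge{\sigma} w$ or $v \cedge{\sigma} x$.

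For the \lq\lq if'' direction I would treat (FV1) and (FV2) separately and verify (C1)--(C3) in each case. If (FV1) holds, say $v$ has degree $3$ in $G_\sigma$, then $v$ is $\sigma$-adjacent to all three other vertices and, as $E_+$ and $E_-$ are disjoint, carries no $(-\sigma)$-edge. Then $G_\sigma$ has no induced $4$-cycle (it has a vertex adjacent to all others) and $G_{-\sigma}$ lives on three vertices, giving (C1); the degree sequences above prevent $G$ from being the four-vertex mountain (whose degree-$3$ vertex mixes signs) or hill ($2$-regular), giving (C3); and for any alternating $4$-path $a \cedge{\tau} b \cedge{-\tau} c \cedge{\tau} d$ the internal vertices $b,c$ are each incident to edges of both signs, so $v \in \{a,d\}$, whence $\tau = \sigma$ and $v$ supplies both the end-chord and the required extra $\sigma$-edge, giving (C2). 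If instead (FV2) holds, then (C1) is immediate, \lq\lq no alternating $4$-path'' makes (C2) vacuous, \lq\lq not a mountain'' is half of (C3), and the other half follows because a hill always contains the alternating $4$-path $w_2 \cedge{\sigma} v_2 \cedge{-\sigma} v_1 \cedge{\sigma} w_1$, so the hypothesis forbids $G$ from being a hill.

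For the \lq\lq only if'' direction I assume (C1)--(C3) and prove (FV1) or (FV2). If some vertex has degree $3$ in $G_+$ or $G_-$ we are done, so suppose not; I claim (FV2) holds. Chordality is (C1) and \lq\lq not a mountain'' follows from (C3), so it suffices to exclude alternating $4$-paths. Were $u \cedge{\sigma} v \cedge{-\sigma} w \cedge{\sigma} x$ one, then (C2) would force $u \cedge{\sigma} x$ and either $u \cedge{\sigma} w$ or $v \cedge{\sigma} x$; in the former $u$ would be $\sigma$-adjacent to $v,w,x$ and in the latter $x$ would be $\sigma$-adjacent to $u,v,w$, each producing a degree-$3$ vertex in $G_\sigma$ and contradicting the assumption. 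Hence no alternating $4$-path exists and (FV2) holds.

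The bulk of the argument is routine degree-sequence bookkeeping; the step demanding the most care is the (C2) analysis. The crux is the two structural facts that on four vertices an alternating $4$-path coexists with (C2) exactly when it forces a uniform-sign degree-$3$ vertex, and, dually, that a hill always harbours an alternating $4$-path while a mountain never does---which is precisely why (FV2) must bar mountains explicitly yet need not mention hills. I would verify these two facts carefully against Definition~\ref{defn:forbidden_subgraphs} before assembling the case analysis.
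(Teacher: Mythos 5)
Your proposal is correct and follows essentially the same route as the paper: both directions are obtained by specializing conditions (C1)--(C3) of Theorem~\ref{thm:characterization} to four vertices, with the same key observations that under (FV1) the internal vertices of an alternating $4$-path carry both signs so the degree-three vertex must be an endpoint (yielding (C2)), that the four-vertex mountain and hill have no vertex of degree three in a single $G_{\sigma}$, and that a four-vertex hill contains an alternating $4$-path while a mountain does not. The extra degree-sequence bookkeeping you spell out is exactly what the paper compresses into phrases like \lq\lq by the shape of mountains and hills''.
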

\begin{proof}
For the \lq\lq only if'' part, suppose that conditions (C1)--(C3) in Theorem \ref{thm:characterization} are satisfied and (FV2) does not hold.
Then by (C1) and (C3), the failure of (FV2) implies that $G$ has an alternating $4$-path, therefore (FV1) follows from (C2).
On the other hand, for the \lq\lq if'' part, suppose that (FV1) or (FV2) holds.
Now if (FV2) holds, then all of (C1), (C2) and (C3) follow, since any hill with four vertices involves an alternating $4$-path.
Moreover, suppose that (FV1) holds.
Then (C1) holds since neither an induced cycle with four vertices nor its complement in $G$ has a vertex of degree three, and (C3) holds by the shape of mountains and hills.
Moreover, if $G$ has an alternating $4$-path $u \cedge{\sigma} v \cedge{-\sigma} w \cedge{\sigma} x$ with $\sigma \in \{+,-\}$, then neither $v$ nor $w$ has degree three in $G_+$ or $G_-$, therefore either $u$ or $x$ has degree three in $G_{\sigma}$.
This implies that $w \cedge{\sigma} u \cedge{\sigma} x$ or $u \cedge{\sigma} x \cedge{\sigma} v$, therefore (C2) holds.
Hence the proof is concluded.
\end{proof}
Note that a list of the non-SE graphs with four vertices is given in \cite{ANN}.

Secondly, we consider the case that the underlying graph $G$ is chordal:
\begin{prop}
\label{prop:characterization_GisChordal}
Suppose that $G$ is chordal (as a non-signed graph).
Then $G$ is signed-eliminable if and only if conditions (C2) and (C3) in Theorem \ref{thm:characterization} are satisfied.
\end{prop}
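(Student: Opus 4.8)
The plan is to deduce the proposition directly from Theorem~\ref{thm:characterization}. By that theorem, $G$ is signed-eliminable precisely when conditions (C1), (C2) and (C3) all hold, so the ``only if'' part of the proposition is immediate: a signed-eliminable $G$ satisfies all three conditions, in particular (C2) and (C3). For the ``if'' part I would assume that $G$ is chordal and that (C2) and (C3) hold, and then show that (C1) is automatic; once (C1) is in hand, Theorem~\ref{thm:characterization} yields at once that $G$ is signed-eliminable. Thus the whole proposition reduces to the single implication: \emph{if the underlying graph $G$ is chordal and (C2), (C3) hold, then both $G_+$ and $G_-$ are chordal.}

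To establish this implication I would argue by contradiction. Fix $\sigma \in \{+,-\}$ and suppose $G_\sigma$ is not chordal, so that $G_\sigma$ contains an induced cycle $x_1 x_2 \cdots x_k x_1$ with $k \geq 4$; choose such a cycle with $k$ minimal. All edges $x_i \cedge{\sigma} x_{i+1}$ carry the sign $\sigma$, and by the choice of the cycle every non-consecutive pair satisfies $x_i \nocedge{\sigma} x_j$. Since the underlying graph $G$ is chordal, this cycle must nonetheless possess a chord in $G$; as that chord cannot have sign $\sigma$ (that would contradict inducedness in $G_\sigma$), it must be a $(-\sigma)$-edge. The aim is then to combine such a cross-signed chord with the monochromatic cycle edges to exhibit one of the configurations forbidden by (C2) or (C3), producing the desired contradiction.

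Concretely, a $(-\sigma)$-chord $x_i \cedge{-\sigma} x_j$ together with the adjacent cycle edges yields a local pattern of the shape $x_{i-1} \cedge{\sigma} x_i \cedge{-\sigma} x_j \cedge{\sigma} x_{j+1}$, which I would try to promote to an \emph{induced} alternating $4$-path in order to invoke (C2); when the surrounding cycle arcs remain chordless, they should instead assemble, via Lemma~\ref{lem:induced_path}, into a mountain or a hill on the vertices lying between $x_i$ and $x_j$ along the cycle, contradicting (C3). I expect the main obstacle to be exactly this extraction step: guaranteeing that the witness one writes down is genuinely \emph{induced}, so that (C2) or the definition of a mountain or hill applies verbatim. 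This forces a careful choice of the chord --- for instance a chord cutting off a shortest cycle arc, or an ``ear'' of a triangulation of the cycle in $G$ --- and a careful exclusion of the stray $\sigma$- and $(-\sigma)$-edges that could spoil inducedness. The shortest cases $k = 4, 5$, where there is little room and the forbidden pattern must be read off almost directly from the chord structure, are likely to be the delicate endpoint of the argument and should probably be treated separately.
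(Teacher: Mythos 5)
Your reduction is exactly the paper's: the ``only if'' direction and the sufficiency of (C2)--(C3) both come straight from Theorem \ref{thm:characterization} once one shows that chordality of $G$ together with (C2) forces (C1), and your opening move for that implication --- a $G$-chord of a $G_\sigma$-cycle of length $\geq 4$ must be a $(-\sigma)$-chord, producing the pattern $v_{i-1} \cedge{\sigma} v_i \cedge{-\sigma} v_j \cedge{\sigma} v_{j-1}$ --- is also the paper's. But your proposal stops at precisely the point you yourself flag as the main obstacle (arranging inducedness so that (C2) applies, with a fallback through Lemma \ref{lem:induced_path}, mountains, hills and (C3), plus separate treatment of $k=4,5$), and none of that is carried out; as written this is a plan rather than a proof.

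The obstacle you worry about is not actually there. Condition (C2) is calibrated against Lemma \ref{lem:AlternatingPath}, whose hypothesis is only that the three signed edges $u \cedge{\sigma} v \cedge{-\sigma} w \cedge{\sigma} x$ are present among four distinct vertices --- no non-adjacency is assumed --- and the paper invokes (C2) in exactly this non-induced form throughout (e.g.\ in the proofs of Lemmas \ref{lem:induced_path} and \ref{lem:proof_main_step2_2}); indeed, if the $4$-path were literally induced then $u \noedge x$ and the conclusion of (C2) could never hold, so the condition cannot be meant that way. With that reading the argument closes immediately: for any cycle $v_0 v_1 \cdots v_{k-1} v_0$ in $G_\sigma$ with $k \geq 4$, chordality of $G$ gives a chord $v_i \edge v_j$ with $j \not\equiv i \pm 1 \pmod{k}$; if it lies in $E_\sigma$ we are done, and otherwise $v_{i-1} \cedge{\sigma} v_i \cedge{-\sigma} v_j \cedge{\sigma} v_{j-1}$ and (C2) forces $v_{i-1} \cedge{\sigma} v_{j-1}$, which is again a chord of the cycle in $G_\sigma$ since $j-1 \not\equiv (i-1) \pm 1 \pmod{k}$. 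No minimality of the cycle, no appeal to (C3) or to Lemma \ref{lem:induced_path}, and no case split on small $k$ is needed.
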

\begin{proof}
It suffices to show that both $G_+$ and $G_-$ are chordal if $G$ is chordal and $G$ satisfies (C2) and (C3).
Let $\sigma \in \{+,-\}$ and let $v_0v_1 \cdots v_{k-1}v_0$ be a cycle in $G_{\sigma}$ with $k \geq 4$.
Then, since $G$ is chordal, we have $v_i \edge v_j$ for some indices $i$ and $j$ with $j \neq i \pm 1$ (where indices are considered in modulo $k$).
The claim holds if $v_i \cedge{\sigma} v_j$, thus suppose that $v_i \cedge{-\sigma} v_j$.
Then we have $v_{i-1} \cedge{\sigma} v_i \cedge{-\sigma} v_j \cedge{\sigma} v_{j-1}$, therefore $v_{i-1} \cedge{\sigma} v_{j-1}$ by (C2).
Hence the claim holds.
\end{proof}
Moreover, we consider the case that the underlying graph $G$ has no independent set of size three:
\begin{prop}
\label{prop:characterization_noThreeIndep}
Suppose that $G$ has no three distinct vertices $u$, $u'$ and $u''$ such that $u \noedge u' \noedge u'' \noedge u$ (i.e.\ the independence number $\alpha(G)$ of $G$ is less than three).
Then $G$ is signed-eliminable if and only if condition (C2) in Theorem \ref{thm:characterization} and the following two conditions are satisfied:
\begin{description}
\item[(I1)] Both $G_+$ and $G_-$ has no cycle of length four or five which is an induced cycle in $G$.
\item[(I2)] $G$ contains no hill with five or six vertices as an induced subgraph.
\end{description}
\end{prop}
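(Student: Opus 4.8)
The plan is to reduce everything to the full characterization (Theorem \ref{thm:characterization}), which already says that $G$ is signed-eliminable if and only if (C1)--(C3) hold. Since (C2) appears verbatim in the desired statement, it suffices to prove the two conditional equivalences: under the standing hypothesis $\alpha(G) < 3$ and assuming (C2), one has (C1) $\Leftrightarrow$ (I1) and (C3) $\Leftrightarrow$ (I2). Granting these, the claim follows, because $G$ is SE iff (C1) $\wedge$ (C2) $\wedge$ (C3), and whenever (C2) holds this is equivalent to (C2) $\wedge$ (I1) $\wedge$ (I2); and if (C2) fails then both $G$ being SE and the conjunction (C2) $\wedge$ (I1) $\wedge$ (I2) are false.

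For (C1) $\Leftrightarrow$ (I1), the direction (C1) $\Rightarrow$ (I1) is immediate, since an induced cycle of $G$ of length four or five with all edges in $E_\sigma$ is in particular a chordless cycle of $G_\sigma$, contradicting chordality. For the converse I would argue as in the proof of Proposition \ref{prop:characterization_GisChordal}. Let $C = v_0 v_1 \cdots v_{k-1} v_0$ be a chordless cycle of $G_\sigma$ with $k \ge 4$. The key point is that, under (C2), $C$ can have no $(-\sigma)$-chord either: if $v_i \cedge{-\sigma} v_j$ for non-consecutive $i,j$, then $v_{i-1} \cedge{\sigma} v_i \cedge{-\sigma} v_j \cedge{\sigma} v_{j-1}$ is an alternating $4$-path on four distinct vertices, so (C2) forces $v_{i-1} \cedge{\sigma} v_{j-1}$, a genuine $\sigma$-chord of $C$, which is absurd. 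Hence $C$ is chordless in $G$, and then $\alpha(G) < 3$ bounds its length: for $k \ge 6$ the vertices $v_0, v_2, v_4$ would be pairwise non-adjacent, an independent triple, so $k \in \{4,5\}$. Thus $C$ is an induced cycle of $G$ of length four or five lying in $G_\sigma$, contradicting (I1), and $G_\sigma$ must be chordal.

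For (C3) $\Leftrightarrow$ (I2), the direction (C3) $\Rightarrow$ (I2) is trivial since the hills in (I2) are hills. For the converse I would determine which mountains and hills can occur at all under the two hypotheses. A direct inspection of Definition \ref{defn:forbidden_subgraphs} shows that in any mountain $(v_1,\dots,v_n;w)$ the vertices $v_1, v_n, w$ are pairwise non-adjacent, and in any hill $(v_1,\dots,v_n;w_1,w_2)$ with $n \ge 5$ the vertices $v_1, v_3, v_5$ are pairwise non-adjacent; so $\alpha(G) < 3$ already forbids every induced mountain and every induced hill with $n \ge 5$. The only remaining case is the hill with $n = 2$, and here (C2) enters: in such a hill the path $w_1 \cedge{\sigma} v_1 \cedge{-\sigma} v_2 \cedge{\sigma} w_2$ has $v_2 \nocedge{\sigma} w_1$ and $v_1 \nocedge{\sigma} w_2$, so neither alternative in the conclusion of (C2) can hold, contradicting (C2). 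Consequently, under (C2) and $\alpha(G) < 3$, the only induced mountains or hills $G$ could contain are hills with $n \in \{3,4\}$, i.e. with five or six vertices, and forbidding exactly these is (I2).

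The main obstacle is the converse half of (C1) $\Leftrightarrow$ (I1): one must combine the (C2)-driven upgrade of a merely $G_\sigma$-chordless cycle into a genuinely induced cycle of $G$ with the length bound furnished by $\alpha(G) < 3$, while checking that the $\sigma$-edge produced by (C2) is a true chord (which needs $v_i v_j$ to be non-consecutive and the four vertices $v_{i-1}, v_i, v_j, v_{j-1}$ to be distinct, exactly as in Proposition \ref{prop:characterization_GisChordal}). The remaining ingredients---the independence-number computations for mountains and for hills with $n \ge 5$, and the single application of (C2) to the two-base hill---are routine.
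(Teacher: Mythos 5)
Your proof is correct and follows essentially the same route as the paper's: reduce to Theorem \ref{thm:characterization}, use (C2) to convert a $(-\sigma)$-chord of a chordless cycle of $G_{\sigma}$ into a $\sigma$-chord and $\alpha(G)<3$ to bound the cycle length before invoking (I1), and dispose of mountains and of hills with $n\geq 5$ by exhibiting independent triples and of the four-vertex hill by (C2). The only caveat, which you share with the paper's own argument, is that your applications of (C2) to the quadruples $v_{i-1},v_i,v_j,v_{j-1}$ and $w_1,v_1,v_2,w_2$ treat (C2) as a condition on arbitrary four distinct vertices carrying the sign pattern $\sigma,-\sigma,\sigma$ rather than on induced paths (the $4$-vertex hill contains no induced $P_4$), which is evidently the reading intended throughout the paper and the one consistent with Lemma \ref{lem:AlternatingPath}.
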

\begin{proof}
The \lq\lq only if'' part follows from Theorem \ref{thm:characterization}.
To prove the \lq\lq if'' part, we show that conditions (C1) and (C3) hold if (C2), (I1) and (I2) are satisfied.
For condition (C1), let $v_0v_1 \cdots v_{k-1}v_0$ be a cycle in $G_{\sigma}$, with $\sigma \in \{+,-\}$ and $k \geq 4$.
Now we have $v_i \edge v_j$ for some indices $i \neq j$ with $j \neq i \pm 1$ (where indices are considered in modulo $k$).
In fact, this follows from (I1) if $k \leq 5$, while this follows if $k \geq 6$ since now $\{v_0,v_2,v_4\}$ does not form an independent set by the assumption.
The condition (C1) holds if $v_i \cedge{\sigma} v_j$, thus suppose that $v_i \cedge{-\sigma} v_j$.
Now we have $v_{i-1} \cedge{\sigma} v_i \cedge{-\sigma} v_j \cedge{-\sigma} v_{j-1}$, therefore $v_{i-1} \cedge{\sigma} v_{j-1}$ by (C2).
Thus the condition (C1) is satisfied.\\
\quad
For condition (C3), note that any mountain $(v_1,v_2,\dots,v_n;w)$ has an independent set $\{v_1,v_n,w\}$, and any hill $(v_1,v_2,\dots,v_n;w_1,w_2)$ with $n \geq 5$ has an independent set $\{v_1,v_3,v_5\}$.
Thus by the assumption, these graphs do not appear in $G$ as induced subgraphs.
Moreover, (C2) implies that any hill with four vertices does not appear in $G$ as an induced subgraph.
Hence the condition (C3) follows from (I2), therefore the proof is concluded.
\end{proof}
\begin{cor}
\label{cor:characterization_complete}
Suppose that $G$ is a complete graph (as a non-signed graph).
Then $G$ is signed-eliminable if and only if for each $\sigma \in \{+,-\}$, $G_{\sigma}$ contains, as an induced subgraph, neither a simple path with four vertices, nor a pair of two disjoint edges such that no vertex of one edge is joined by an edge in $G_{\sigma}$ with a vertex of another edge.
\end{cor}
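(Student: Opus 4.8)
The plan is to reduce the statement to condition (C2) of Theorem~\ref{thm:characterization} and then to translate (C2) into the two forbidden induced subgraphs. First I would observe that a complete graph has independence number $1 < 3$, so Proposition~\ref{prop:characterization_noThreeIndep} applies: $G$ is signed-eliminable if and only if (C2), (I1) and (I2) hold. Since $G$ is complete, every pair of distinct vertices is joined by an edge, so $G$ has no induced cycle of length at least four and cannot contain a mountain or a hill as an induced subgraph, all of which possess non-adjacent vertex pairs. Hence (I1) and (I2) are vacuously satisfied, and $G$ is signed-eliminable if and only if (C2) holds. It therefore suffices to prove that, for complete $G$, condition (C2) is equivalent to the assertion that for each $\sigma \in \{+,-\}$ the graph $G_{\sigma}$ contains neither an induced path on four vertices ($P_4$) nor an induced pair of disjoint edges ($2K_2$) with no edge of $G_\sigma$ between them---which is exactly the condition in the statement.

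For the easy direction I would show that the presence of a forbidden subgraph forces (C2) to fail. If $G_{\sigma}$ has an induced $P_4$, say $a \cedge{\sigma} b \cedge{\sigma} c \cedge{\sigma} d$ with $ac, bd, ad \notin E_{\sigma}$ (hence in $E_{-\sigma}$ by completeness), then $b \cedge{\sigma} a \cedge{-\sigma} c \cedge{\sigma} d$ is an alternating configuration for which (C2) would require $bd \in E_{\sigma}$, a contradiction. Likewise, if $G_{\sigma}$ has an induced $2K_2$ on edges $ab, cd \in E_{\sigma}$ with all four cross pairs in $E_{-\sigma}$, then $a \cedge{\sigma} b \cedge{-\sigma} c \cedge{\sigma} d$ violates (C2). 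Thus (C2) implies that no $G_{\sigma}$ contains an induced $P_4$ or $2K_2$.

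The converse is the substance of the proof, and the case analysis is where I expect the real effort to lie. Suppose (C2) fails, witnessed by distinct vertices $u,v,w,x$ and a sign $\sigma$ with $u \cedge{\sigma} v \cedge{-\sigma} w \cedge{\sigma} x$ but with the conclusion of (C2) false. By completeness each of the three remaining pairs $uw$, $ux$, $vx$ lies in exactly one of $E_{\sigma}$, $E_{-\sigma}$, so I would split into cases according to their signs. When $ux \in E_{-\sigma}$: if also $uw, vx \in E_{-\sigma}$ then $\{u,v,w,x\}$ induces a $2K_2$ in $G_{\sigma}$ via the edges $uv, wx$; if exactly one of $uw, vx$ lies in $E_{\sigma}$ then $\{u,v,w,x\}$ induces a $P_4$ in $G_{\sigma}$ (for instance $v \cedge{\sigma} u \cedge{\sigma} w \cedge{\sigma} x$ when $uw \in E_{\sigma}$, and symmetrically otherwise); and if $uw, vx \in E_{\sigma}$ then the two $(-\sigma)$-edges $vw$ and $ux$ are disjoint, so $\{u,v,w,x\}$ induces a $2K_2$ in $G_{-\sigma}$. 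In the remaining case $ux \in E_{\sigma}$, the conclusion of (C2) can fail only when $uw, vx \in E_{-\sigma}$, and then $\{u,v,w,x\}$ induces the $P_4$ given by $v \cedge{\sigma} u \cedge{\sigma} x \cedge{\sigma} w$ in $G_{\sigma}$. In every case some $G_{\tau}$ with $\tau \in \{+,-\}$ contains an induced $P_4$ or $2K_2$, which completes the converse and hence the equivalence. The only delicate points are bookkeeping the signs of the three free pairs and verifying in each case that the exhibited four vertices genuinely induce the claimed $P_4$ or $2K_2$, i.e.\ that every non-path pair carries the opposite sign; completeness of $G$ is what guarantees this, since there is never a missing edge to worry about.
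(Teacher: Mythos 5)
Your proposal is correct and follows essentially the same route as the paper: reduce via Proposition \ref{prop:characterization_noThreeIndep} to condition (C2) (with (I1) and (I2) vacuous by completeness), and then establish the equivalence of (C2) with the forbidden-subgraph condition by casing on the signs of the pairs $ux$, $uw$, $vx$. Your case analysis matches the paper's (zero, one, or two extra $\sigma$-edges when $u \cedge{-\sigma} x$, plus the final $P_4$ case when $u \cedge{\sigma} x$), only organized contrapositively and spelled out in more detail.
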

\begin{proof}
Since conditions (I1) and (I2) in Proposition \ref{prop:characterization_noThreeIndep} are always satisfied by the assumption, it suffices to show that (C2) is now equivalent to the condition in the statement.
First, if (C2) holds then the condition in the statement is satisfied, since the two kinds of subgraphs in the statement do not satisfy the condition (C2).
On the other hand, suppose that the condition in the statement holds and $G$ has an alternating $4$-path $u \cedge{\sigma} v \cedge{-\sigma} w \cedge{\sigma} x$.
Put $V' = \{u,v,w,x\}$ and $G' = G|_{V'}$.
Now if $u \cedge{-\sigma} x$, then the edges $uv$ and $wx$ in $G_{\sigma}$ form a pair as in the statement when $G'_{\sigma}$ has no more edge; $G'_{\sigma}$ is a simple path with four vertices when $G'_{\sigma}$ has just one more edge; and the edges $ux$ and $vw$ in $G_{-\sigma}$ form a pair as in the statement when $G'_{\sigma}$ has two more edges.
Thus the condition in the statement implies that $u \cedge{\sigma} x$.
Moreover, since $G'_{\sigma}$ is not a simple path with four vertices by the condition, we have either $u \cedge{\sigma} w$ or $v \cedge{\sigma} x$.
Thus the condition (C2) holds.
Hence the proof is concluded.
\end{proof}


\begin{thebibliography}{9}

\bibitem{ANN}
T.~Abe, K.~Nuida, Y.~Numata, Signed-eliminable graphs and free multiplicities on the braid arrangement,
to appear in J.\ London Math.\ Soc.\ (preliminary version available at http://arxiv.org/abs/0712.4110)

\bibitem{Ath1}
C.~A.~Athanasiadis, Deformations of Coxeter hyperplane arrangements and their characteristic polynomials,
in: M.~Falk, H.~Terao (eds.), \textit{Arrangements -- Tokyo 1998}, Advanced Studies in Pure Mathematics 27, Kinokuniya, Tokyo, 2000, pp.~1--26.

\bibitem{graph}
R.~Diestel, \textit{Graph Theory}, third ed.,
Graduate Texts in Mathematics 173, Springer-Verlag, New York, 2006.

\bibitem{FG}
D.~R.~Fulkerson, O.~A~.Gross, Incidence matrices and interval graphs,
Pacific J.\ Math.\ 15(3) (1965) 835--855.

\bibitem{McK}
T.~A.~McKee, Chordally signed graphs,
Discrete Appl.\ Math.\ 119 (2002) 273--280.

\bibitem{Ros}
D.~J.~Rose, Triangulated graphs and the elimination process,
J.\ Math.\ Anal.\ Appl.\ 32 (1970) 597--609.

\bibitem{Sta1}
R.~P.~Stanley, Supersolvable lattices,
Algebra Universalis 2 (1972) 197--217.

\end{thebibliography}
\end{document}